\title{Regularity and geometric estimates for minima of discontinuous functionals
\footnote{This work is partially supported by CNPq-Brazil.}
}
\author{Raimundo Leit\~ao \quad $\&$ \quad Eduardo V. Teixeira}
\newlength{\hchng}
\newlength{\vchng}
\def \div {\mathrm{div}}
\def \dist {\mathrm{dist}}
\def \redbdry {\partial_\mathrm{red}}
\def \Leb {\mathscr{L}^n}
\newtheorem{theorem}{Theorem}[section]
\newtheorem{lemma}[theorem]{Lemma}
\newtheorem{proposition}[theorem]{Proposition}
\newtheorem{corollary}[theorem]{Corollary}
\theoremstyle{definition}
\newtheorem{definition}[theorem]{Definition}
\theoremstyle{remark}
\newtheorem{remark}[theorem]{Remark}
\numberwithin{equation}{section}
\newcommand{\intav}[1]{\mathchoice {\mathop{\vrule width 6pt height 3 pt depth  -2.5pt
\kern -8pt \intop}\nolimits_{\kern -6pt#1}} {\mathop{\vrule width
5pt height 3  pt depth -2.6pt \kern -6pt \intop}\nolimits_{#1}}
{\mathop{\vrule width 5pt height 3 pt depth -2.6pt \kern -6pt
\intop}\nolimits_{#1}} {\mathop{\vrule width 5pt height 3 pt depth
-2.6pt \kern -6pt \intop}\nolimits_{#1}}}
\begin{document}
\maketitle

\begin{abstract}

In this paper we study nonnegative minimizers of general degenerate elliptic functionals, $\int F(X,u,Du) dX \to \min$, for variational kernels $F$ that are discontinuous in $u$ with discontinuity of order $\sim \chi_{\{u > 0 \}}$. The Euler-Lagrange equation is therefore governed by a 
non-homogeneous, degenerate elliptic equation with free boundary between the positive and the zero phases of the minimizer. We show optimal gradient estimate and nondegeneracy of minima. We also address weak and strong regularity properties of free boundary. We show the set $\{u > 0 \}$ has locally finite perimeter and that the reduced free boundary, $\partial_\text{red} \{u > 0 \}$, has $\mathcal{H}^{n-1}$-total measure. For more specific problems that arise in jet flows, we show the reduced free boundary is locally the graph of a $C^{1,\gamma}$ function. 

\medskip

\noindent \textit{MSC:} 35R35, 35B65, 35J70.

\medskip 

\noindent \textbf{Keywords:}  Discontinuous functionals, free boundary problems, degenerate elliptic equations.

\end{abstract}


\section{Introduction}

Given a bounded smooth domain $\Omega \subset \mathbb{R}^n$ and a bounded non-negative function $\phi \in W^{1,p}(\Omega)$, $2 \leq p < n$, we study regularity and  fine geometric properties of solutions to the following minimization problem
\begin{equation}\label{Funct Intro} 
	\min \left \{\int_{\Omega} F(X,u,\nabla u) dX  : u \in W^{1,p}_\phi(\Omega)  \right \},
\end{equation}
where $W^{1,p}_\phi(\Omega)$ denotes the Sobolev space of all functions in $L^p$ with distributional derivatives in $L^p$ and trace value $\phi$. The variational kernel $F\colon \Omega \times \mathbb{R} \times \mathbb{R}^n \to \mathbb{R}$ satisfaz the following structural conditions: $F(X,u,\xi) = G(X, \xi) + g(X,u)$ and

\begin{itemize}
	\item[(G1)] For all $\xi \in \mathbb{R}^n$, the mapping $X \mapsto G(X, \xi)$ is contiunous. 
	\item[(G2)] There exists a positive constant $0 < \lambda $ such that, 
	$$
		\lambda |\xi|^p \le G(X, \xi) \le \lambda^{-1} |\xi|^p.
	$$
	\item[(G3)] For almost all $X \in \Omega$, the mapping $\xi \mapsto G(X, \xi)$ is strictly convex, differentiable and satisfaz
	$$
		G(X, t \xi) = |t|^p G(X, \xi), \quad t\in \mathbb{R}, ~ \xi \in \mathbb{R}^n.
	$$
	\item[(g1)] The function $g$ is defined by 
	$$
			g(X, u) = f(X)\left( u^{+}\right)^{m}  + Q\chi_{\left\lbrace u > 0 \right\rbrace},  \quad 1 \leq m < p,
	$$
	where $f$ is measurable,  $-K \leq f \leq K$,   for some $K>0$; $Q$ is $C^{0,\beta}$-continuous, $0 < \epsilon < Q < \epsilon^{-1}$ for some $\epsilon > 0$.
\end{itemize}
An important prototype of variational kernel to keep in mind is 
\begin{equation}\label{prototype}
	F(X, u, \xi) = |\xi|^{p-2} A(X)\xi \cdot \xi + f(X)\left( u^{+}\right)^{m} + Q \chi_{\{u > 0 \}},
\end{equation}
for a positive definite matrix $A$ with continuous coefficients. Motivations come from the study of jet flow, cavity problems, among many other applications.  For notation convenience, we label the functional appearing in the minimization problem \eqref{Funct Intro} by $\mathfrak{F} \colon W^{1,p}_\varphi(\Omega) \to \mathbb{R}$, i.e., hereafter
$$
	\mathfrak{F}(u) := \int_{\Omega} F(X,u,\nabla u) dX.
$$
Also, any positive constant $C=C\left( n, p, m, \lambda,\phi, \epsilon, K , \Omega \right)$ that depends only on dimension and the parameter constants of the problem will, hereafter, be called a \textit{universal} constant.
\par
The key feature of functional $\mathfrak{F}$ is that it is discontinuous with respect to $u$, thus the well established classical theory of the Calculus of Variations is not suitable to treat such problems. In fact, for an existing minimum $u$, the functional $\mathfrak{F}$ presents discontinuity for small perturbations near points on the, in principle unknown, set $\partial \{u > 0 \}$. Such a discontinuity reflects in a lack of smoothness of $u$ across the boundary  of its zero level surface. 

\par

The study of variational problem \eqref{Funct Intro} goes back to the fundamental work of Alt and Caffarelli, \cite{AC}, which provides a thorough analysis of such problem for $p=2$, $f\equiv 0$ and $A(X) = \text{Id}$ in \eqref{prototype}. Danielli and Petrosyan in \cite{DP} developed the corresponding Alt and Caffarelli theory for the $p$-laplace, i.e., $f\equiv 0$ and $A(X) = \text{Id}$.  

In this paper we study the variational problem \eqref{Funct Intro}  in its full generality, providing existence, regularity and geometric properties of certain heterogeneous free boundary problems ruled by degenerate elliptic equations. The results from this work are new yet for the Poisson type equation $m=1$. It also brings new results even in the linear setting $p=2$. 

\par

In Section \ref{S existence} we show there is a minimum for the functional $\mathfrak{F}$, such a minimum is non-negative and continuous in $\Omega$. We further show that within the set of positivity, $u$ satisfies the desired Euler-Lagrange equation
$$
	\text{div} \left ( \nabla_{\xi} G\left( X, Du \right) \right ) = m f(X) u^{m-1}, \quad \text{ in } \{u > 0 \},
$$
in the sense of distributions. In particular $u$ is $C^{1,\epsilon}$ in such a set. Nevertheless, due to the discontinuity of $\mathfrak{F}$ near free boundary points, $\nabla u$ jumps from positive values to zero through $\partial \{ u > 0 \}$. Therefore, the optimal regularity estimate available for minima is Lipschitz continuity. Such a result is established in Section \ref{S Lip and ND}. By Lipschitz regularity, we conclude that $u$ grows at most at a linear fashion away from the free boundary. However, from energy considerations, we  actually show that $u$ grows precisely at a linear fashion from   $\partial \{ u > 0 \}$. This is an important geometric information that provides access to finer geometric-measure features of the free boundary. In fact, in Section \ref{S Hausdorff Est} we show that 
$$
	\Lambda := \text{div} \left ( \nabla_{\xi} G\left( X, Du \right) \right ) - m f(X) u^{m-1},
$$ 
defines a non-negative measure supported along the free boundary. We further show that the set of positivity of $u$, $\{ u > 0 \}$, is locally a set of finite perimeter. A finer property is actually shown: we verify that 
$$
	\mathcal{H}^{n-1} \left (\partial \{u > 0 \} \cap B_r(Z) \right ) \sim r^{n-1},
$$
for any ball $B_r(Z) $ centered at a free boundary point. In particular we conclude the reduced free boundary, $\partial_\text{red} \{u > 0 \}$ has total $\mathcal{H}^{n-1}$-Hausdorff measure. 

In the last Section we address smoothness of the (reduced) free boundary for the heterogeneous, quasi-linear cavity problem
$$
\left \{ 
		\begin{array}{rllll}
			\div  \left ( A\left( X \right) \nabla u \right )  &=&  mf(X)u^{m-1} , & \text{ in } &  \{ u > 0 \} \\
			\langle A \nabla u, \nabla u \rangle&=& Q & \text{ on } &  \partial \{u > 0 \} \cap \Omega. 
		\end{array}
	\right.
$$
We show the free boundary is a $C^{1,\gamma}$ smooth surface, up to a possible $\mathcal{H}^{n-1}$ negligible set, providing therefore a classical solution to the corresponding quasi-linear Bernoulli type problem.

\medskip

\medskip

\noindent \textbf{Acknowledgement}  This paper is part of the first author's PhD thesis at the Department of Mathematics at Universidade Federal do Cear\'a, Brazil. Both authors would like to express their gratitude to this institution for such a pleasant and productive scientific atmosphere. This work has been partially supported by Capes and CNPq.

\section{Existence and continuity of minimizers} \label{S existence}

In this section we show the \textit{discontinuous} optimization problem \eqref{Funct Intro} has at least one minimizer. Uniqueness is known to fail even in simpler models. In the sequel we obtain a universal modulus of continuity for such a minimum. Throughout this section, we shall always work under the structural assumptions (G1)--(G3) and (g1), 
 
\begin{theorem}
\label{t 2.1}
There exists a minimizer $u \in W^{1,p}_\phi $ to the functional \eqref{Funct Intro}. Furthermore $u \ge 0$ in $\Omega$.
\end{theorem}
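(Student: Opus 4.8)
The plan is to use the direct method of the Calculus of Variations, with the only subtlety coming from the discontinuous term $Q\chi_{\{u>0\}}$, which is only lower semicontinuous under the relevant convergence. First I would establish that the functional $\mathfrak{F}$ is bounded below on $W^{1,p}_\phi(\Omega)$: the coercive term $\lambda|\nabla u|^p$ from (G2) dominates the (possibly negative) lower-order contribution $f(X)(u^+)^m$ with $m<p$, after invoking the Sobolev and Poincaré inequalities relative to the fixed trace $\phi$, while $Q\chi_{\{u>0\}}\geq 0$. Hence $I_0:=\inf_{W^{1,p}_\phi}\mathfrak{F}>-\infty$ and we may take a minimizing sequence $\{u_k\}$. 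The same computation (bounding $\mathfrak{F}(u_k)$ from below by $\tfrac{\lambda}{2}\|\nabla u_k\|_{L^p}^p$ minus universal constants, once $k$ is large) shows $\{u_k\}$ is bounded in $W^{1,p}_\phi(\Omega)$.

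Next I would extract, via reflexivity of $W^{1,p}$ (recall $p\geq 2$) and the Rellich–Kondrachov theorem, a subsequence with $u_k\rightharpoonup u$ weakly in $W^{1,p}(\Omega)$, $u_k\to u$ strongly in $L^p(\Omega)$, and $u_k\to u$ a.e.\ in $\Omega$; the trace operator being weakly continuous gives $u\in W^{1,p}_\phi(\Omega)$, so $u$ is admissible. The remaining task is to pass to the limit in each piece of $\mathfrak{F}$. For the gradient term $\int_\Omega G(X,\nabla u)\,dX$, convexity of $\xi\mapsto G(X,\xi)$ from (G3) together with the Carathéodory and growth conditions (G1)–(G2) gives sequential weak lower semicontinuity by the classical Tonelli–Serrin theorem. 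For $\int_\Omega f(X)(u_k^+)^m\,dX$, strong $L^p$ convergence (hence, up to a further subsequence, domination and a.e.\ convergence of $(u_k^+)^m$) plus $m<p$ and $|f|\leq K$ let me pass to the limit by dominated convergence, so this term converges (no semicontinuity loss).

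The one genuinely delicate point—and the main obstacle—is the discontinuous term $\int_\Omega Q\,\chi_{\{u_k>0\}}\,dX$. I expect the key lemma here to be: if $u_k\to u$ a.e., then $\chi_{\{u>0\}}\leq\liminf_k\chi_{\{u_k>0\}}$ pointwise a.e., because at a.e.\ point where $u(X)>0$ we have $u_k(X)>0$ for all large $k$. Combined with $Q>0$ and Fatou's lemma, this yields
\[
	\int_\Omega Q\,\chi_{\{u>0\}}\,dX \leq \liminf_{k\to\infty}\int_\Omega Q\,\chi_{\{u_k>0\}}\,dX,
\]
which is exactly the lower semicontinuity we need (equality may fail, reflecting genuine discontinuity of the functional, but the inequality suffices for minimization). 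Assembling the three estimates gives $\mathfrak{F}(u)\leq\liminf_k\mathfrak{F}(u_k)=I_0$, so $u$ is a minimizer.

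Finally, nonnegativity: given a minimizer $u$, I would test with the competitor $u^+=\max(u,0)\in W^{1,p}_\phi(\Omega)$ (it has the same trace since $\phi\geq 0$). One checks $\nabla u^+=\chi_{\{u>0\}}\nabla u$ a.e., so by (G3) and $G\geq 0$ we get $G(X,\nabla u^+)\leq G(X,\nabla u)$ a.e., with strict inequality on $\{u<0\}$ unless $\nabla u=0$ there; also $g(X,u^+)=f(X)(u^+)^m+Q\chi_{\{u>0\}}=g(X,u)$ since $(u^+)^m=(u^+)^m$ and $\{u^+>0\}=\{u>0\}$. Hence $\mathfrak{F}(u^+)\leq\mathfrak{F}(u)$, and minimality forces $\nabla u=0$ a.e.\ on $\{u<0\}$; since $u$ has nonnegative trace, this forces $|\{u<0\}|=0$, i.e.\ $u\geq 0$ a.e.\ in $\Omega$.
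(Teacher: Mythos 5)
Your proof follows essentially the same direct-method strategy as the paper: the same coercivity bound establishing $I_0>-\infty$ and boundedness of a minimizing sequence, the same compactness extraction, weak lower semicontinuity of the $G$-term, and the same competitor $u^+ = \max(u,0)$ for nonnegativity together with the observation that $g(X,u^+)=g(X,u)$ and the resulting forcing of $\nabla(\min(u,0))=0$ a.e. The one place you are more explicit than the paper is in passing to the limit in the $g$-term, where you separate the $f(X)(u^+)^m$ piece (handled by dominated convergence along a dominated a.e.-convergent subsequence) from the $Q\chi_{\{u>0\}}$ piece (handled by the pointwise inequality $\chi_{\{u>0\}}\le\liminf_k\chi_{\{u_k>0\}}$ plus Fatou); the paper compresses this into a single appeal to pointwise convergence and (g1), so your version usefully fills in a detail rather than taking a different route.
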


\begin{proof} Let us label
\begin{eqnarray*}
	I_{0}:= \min \left \{ \int_{\Omega} F(X,v,\nabla v) dX  : v \in W^{1,p}_\phi(\Omega) \right \}.
\end{eqnarray*}
Initially we show that $I_0 > - \infty$. Indeed, for any $v \in W^{1,p}_\phi(\Omega)$, by Poincar\'e inequality, Young inequality ($1 < \frac{p}{m}$) and H\"older inequality, there exist universal constants $c, C >0$ such that
\begin{eqnarray}
\label{bounded of Lp norm}
c\Vert v \Vert^{p}_{L^{p}} - c\Vert \phi \Vert^{p}_{L^{p}} - \lambda \Vert \nabla \phi \Vert^{p}_{L^{p}} \leq \lambda          \Vert \nabla v \Vert^{p}_{L^{p}},
\end{eqnarray}
and
\begin{eqnarray}
\label{th 2.1.1}
- c\Vert v \Vert^{p}_{L^{p}} - C \leq - C\Vert v \Vert^{m}_{L^{p}} \leq -K\Vert v \Vert^{m}_{L^{m}},
\end{eqnarray}
Combining (\ref{bounded of Lp norm}) and (\ref{th 2.1.1}) we obtain
\begin{eqnarray}
\label{th 2.1.b}
- C - c\Vert \phi \Vert^{p}_{L^{p}} - \lambda \Vert \nabla \phi \Vert^{p}_{L^{p}} \leq \lambda \Vert \nabla v \Vert^{p}_{L^{p}} - K\Vert v \Vert^{m}_{L^{m}},
\end{eqnarray}
which reveals
\begin{eqnarray*}
\label{}
- C - c\Vert \phi \Vert^{p}_{L^{p}} - \lambda \Vert \nabla \phi \Vert^{p}_{L^{p}} & \leq & \int_{\Omega} \left( \lambda \vert \nabla v \vert^{p} - K\vert v \vert^{m} + \epsilon  \chi_{\{v > 0 \}} \right)dX.
\end{eqnarray*}
Finally, from (G2) and (g1) we find 
\begin{eqnarray}
\label{th 2.1.a}
\int_{\Omega} \left( \lambda \vert \nabla v \vert^{p} - K\vert v \vert^{m} + \epsilon  \chi_{\{v > 0 \}} \right)dX \leq\int_{\Omega} F(X,v,\nabla v) dX.
\end{eqnarray}
\par
Let $v_{j} \in W^{1,p}_\phi(\Omega)$ be a minimizing sequence. We can suppose for $j \gg 1$, that    
\begin{eqnarray*}
\int_{\Omega} F(X,v_{j},\nabla v_{j}) dX & \leq & I_{0} + 1. 
\end{eqnarray*}
From (\ref{th 2.1.a}) and the H\"older inequality we obtain
\begin{eqnarray}
\label{th 2.1.2}
\int_{\Omega}\vert \nabla v_{j} \vert^{p} dX & \leq & \frac{K}{\lambda}\Vert v_{j} \Vert^{m}_{L^{m}} + \frac{\epsilon}{\lambda} \Leb \left( \Omega \right) + \frac{I_{0}}{\lambda} + \frac{1}{\lambda} \\ \nonumber
& \leq & C\Vert v_{j} \Vert^{m}_{L^{p}} + \frac{I_{0}}{\lambda} + C 
\end{eqnarray}
By Poincar\'e inequality we have
\begin{eqnarray}
\label{th 2.1.3}
C \Vert v_{j} \Vert^{m}_{L^{p}} & \leq & C\left(\Vert \nabla v_{j} \Vert^{m}_{L^{p}} + \Vert \nabla \phi \Vert^{m}_{L^{p}} + \Vert \phi \Vert^{m}_{L^{p}} \right).  
\end{eqnarray}
Also we have
\begin{eqnarray}
\label{th 2.1.4}
C\Vert \nabla v_{j} \Vert^{m}_{L^{p}} \leq C + \frac{1}{2} \Vert \nabla v_{j} \Vert^{p}_{L^{p}}.
\end{eqnarray}
Combining (\ref{th 2.1.2}), (\ref{th 2.1.3}) and (\ref{th 2.1.4}) we obtain
\begin{eqnarray}
\label{bounded of Lp norm of gradient}
\int_{\Omega}\vert \nabla v_{j} \vert^{p} dX & \leq & C\left(\Vert \nabla \phi \Vert^{m}_{L^{p}} + \Vert \phi \Vert^{m}_{L^{p}} \right) + \frac{I_{0}}{\lambda} + C.
\end{eqnarray}
Thus, using  Poincar\'e inequality once more, we conclude that $\{v_{j} - \phi\}$ is a bounded sequence in $W_0^{1,p}\left( \Omega \right)$. By weak compactness, there is a function $u \in W_{\phi}^{1,p}\left( \Omega \right)$ such that, up to a subsequence,
\begin{eqnarray*}
v_{j} \rightarrow  u  \ \ \mbox{weakly} \ \mbox{in} \ W^{1,p}\left( \Omega \right), \quad v_{j} \rightarrow  u \ \ \mbox{in} \ L^{p}\left( \Omega \right) \quad v_{j} \rightarrow u  \ \ \text{a.  e.} \ \mbox{in} \ \Omega. 
\end{eqnarray*}
By compactness, for a subsequence, we obtain
\begin{eqnarray*} 
v_{j} \rightarrow  u \ \ \mbox{in} \ L^{p}\left( \Omega \right).
\end{eqnarray*}
It now follows from lower semicontinuity of $G$, see, for instance, \cite{HKM}, Chap. 5), 
\begin{eqnarray*}
\int_{\Omega}G\left( X, \nabla u \right)dX &\leq & \liminf_{j \rightarrow \infty} \int_{\Omega}G\left( X, \nabla v_{j} \right)dX.
\end{eqnarray*}
Condition (g1) and pointiwise convergence gives
\begin{eqnarray*}
\int_{\Omega}g\left( X,  u \right)dX &\leq & \liminf_{j \rightarrow \infty} \int_{\Omega}g\left( X, v_{j} \right)dX.
\end{eqnarray*}
In conclusion,
\begin{eqnarray*}
\int_{\Omega}F\left( X,  u, \nabla u \right)dX &\leq & \liminf_{j \rightarrow \infty} \int_{\Omega}F\left( X, v_{j}, \nabla v_{j} \right)dX,
\end{eqnarray*}
which proves the existence a minimizer. 

\par

Let us  turn our attention to non-negativity  property of  $u$. To verify this fact, we initially notice that
$$
	\chi_{\left\lbrace \max\left( u, 0\right)> 0\right\rbrace} \leq \chi_{\left\lbrace  u > 0 \right\rbrace}.
$$
Thus, 
\begin{eqnarray}
\label{t 2.1.1}
\int_{\Omega}  g\left( X, \max \left( u, 0\right)\right) - g\left( X, u\right) dX  \nonumber 
& = & \int_{\Omega} f \left( \left( u^{+}\right)^{m} - \left( u^{+}\right)^{m} \right) dX  \\ \nonumber
& + & \int_{\Omega} Q\left( \chi_{\left\lbrace \max\left( u, 0\right)> 0\right\rbrace}  - \chi_{\left\lbrace  u > 0 \right\rbrace}\right)  dX \\ 
& \leq & 0.
\end{eqnarray}
Then, by minimality of $u$ and (\ref{t 2.1.1}) we obtain
\begin{eqnarray}
0 \nonumber &\leq & \int_{\Omega}  F(X, \max \left( u, 0\right), \nabla \left( \max \left( u, 0\right)\right) ) - F(X, u, \nabla u) dX \\ \nonumber
& = & \int_{\Omega}  G(X, \nabla \left( \max \left( u, 0\right)\right) ) - G(X, \nabla u) dX  + \int_{\Omega} g\left( X, \max \left( u, 0\right)\right) - g\left( X, u\right) dX \\ 
& \leq &  - \int_{\left\lbrace u < 0 \right\rbrace } G(X, \nabla u) dX.  
\end{eqnarray} 
From (G2) we can write,
\begin{eqnarray}
0 \nonumber & \geq & \int_{\left\lbrace u \leq 0 \right\rbrace} G\left(X, \nabla u \right) dX \\ \nonumber
& \geq &\lambda \int_{\left\lbrace u < 0 \right\rbrace} \vert \nabla u \vert^{p} dX
\\ \nonumber
& = & \lambda \int_{\Omega} \vert \nabla \left( \min\left( u, 0\right)\right)\vert^{p} dX,   
\end{eqnarray} 
and the nonnegativity of $u$ follows.  
\end{proof}

\bigskip

\begin{remark}

As previously set in condition (g1), throughout the whole paper we shall work under the range $1\le m < p$. Such a constrain is merely for the existence of minima of the functional $\mathfrak{F}$. Also, from inequalities (\ref{bounded of Lp norm}), (\ref{th 2.1.2}) and (\ref{th 2.1.3}), it is possible to show existence of minimizer provided $K$ is small enough. In addition, we can  to obtain critical points to the functional $\mathfrak{F}$,  within the range $p < m \leq p*$ (see \cite{IPeral}), where $p*:= \frac{np}{n-p}$. 
\end{remark}

Even though the functional is discontinuous, we will show that by energy considerations, it is possible to prove that minimizers are universally continuous. The delicate question of optimal regularity will be addressed in the next Section.

\begin{theorem} 
\label{th 2.2}
Let $u$ be a minimizer of (\ref{Funct Intro}). There exist universal constants $M > 0$ and $\beta \in \left( 0, 1\right)$ such that $\Vert u \Vert_{L^{\infty}\left( \Omega \right)} \leq  M$ and $u \in C_{loc}^{0,\beta}\left( \Omega \right)$.
\end{theorem}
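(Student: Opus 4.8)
The statement asserts two things: a universal $L^\infty$ bound $\|u\|_{L^\infty(\Omega)} \le M$, and a universal local Hölder modulus of continuity. The plan is to obtain these by comparison arguments exploiting minimality, together with the De Giorgi iteration technique for the $p$-energy.

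First I would establish the $L^\infty$ bound. The natural test function is a truncation $u_k := \min(u, \sup_\Omega \phi + k)$ or, since $\phi$ is bounded and we want to control $u$ from above, $\max(u, \text{(something)})$ capped appropriately; more precisely set $v = \min(u, N)$ for $N \ge \|\phi\|_{L^\infty}$, which still lies in $W^{1,p}_\phi(\Omega)$. Comparing $\mathfrak{F}(u) \le \mathfrak{F}(v)$ and using (G2) to control $G$ from below by $\lambda|\nabla u|^p$, the characteristic-function term $Q\chi_{\{u>0\}}$ cancels on the set where $u = v$ and only helps (it is nonnegative) elsewhere, while the lower-order term $f(u^+)^m$ with $m < p$ contributes a term dominated via Young's inequality. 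This yields a Caccioppoli-type inequality on the superlevel sets $\{u > N\}$, and a standard De Giorgi / Stampacchia iteration on $N \nearrow$ gives $u \le M$ for a universal $M$ (nonnegativity from Theorem~\ref{t 2.1} handles the lower bound). The key input that makes this work despite the discontinuity is that the bad term $Q\chi_{\{u>0\}}$ is \emph{bounded}, hence behaves like a bounded forcing term and is harmless for an $L^\infty$ estimate.

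Next, for interior Hölder continuity, I would show that $u$ is a \emph{quasi-minimizer} (or $\omega$-minimizer) of the pure $p$-Dirichlet-type energy $\int G(X,\nabla u)\,dX$. Fix a ball $B_r \Subset \Omega$ and any competitor $w$ with $w - u \in W^{1,p}_0(B_r)$; extending $w$ by $u$ outside $B_r$ and using minimality of $u$,
\begin{equation*}
\int_{B_r} G(X,\nabla u)\,dX \le \int_{B_r} G(X,\nabla w)\,dX + \int_{B_r} \big( g(X,w) - g(X,u) \big)\,dX.
\end{equation*}
Since $0 \le u, w \le M$ (apply the $L^\infty$ bound also to $w$ after truncating, which only lowers both energies), $|f| \le K$ and $0 < Q < \epsilon^{-1}$, the right-most integral is bounded by $C(M,K,\epsilon)\,|B_r| = C r^n$. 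Thus $u$ minimizes $\int_{B_r} G(X,\nabla \cdot)\,dX$ up to an error $C r^n$, i.e.\ $u$ is a quasi-minimizer with gauge $r^n = r^{n-p}\cdot r^p$; since $p < n$ we have $n > p$, so the excess decays and the general regularity theory for quasi-minimizers of $p$-energy functionals with (G1)--(G3)-type structure (see Giaquinta--Giusti, or \cite{HKM}) yields $u \in C^{0,\beta}_{\loc}(\Omega)$ with $\beta$ and the seminorm depending only on universal quantities. Equivalently one can run a direct De Giorgi argument: the Caccioppoli inequality on level sets $\{u > \ell\} \cap B_r$ now carries the extra additive $Cr^n$, and because $n > p$ this term is lower-order in the iteration, producing oscillation decay $\osc_{B_\rho} u \le C(\rho/r)^\beta \osc_{B_r} u + C r^\alpha$.

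\textbf{Main obstacle.} The principal subtlety is handling the discontinuous term $Q\chi_{\{u>0\}}$ in the comparison: one must check that passing to truncated competitors does not \emph{increase} this term in an uncontrolled way. For the upper truncation $v = \min(u,N)$ one has $\{v > 0\} = \{u > 0\}$, so the term is unchanged — this is clean. For the Hölder argument the term is simply absorbed into the $O(r^n)$ error since it is globally bounded, so no delicate cancellation is needed there; the real work is the bookkeeping in the De Giorgi iteration to confirm that the $r^n$ defect is genuinely lower-order relative to the $p$-energy scaling (which is exactly where $m < p$ and $p < n$ enter). I would expect the write-up to lean on a cited quasi-minimizer regularity theorem rather than redo the iteration from scratch.
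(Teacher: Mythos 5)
Your proof of the $L^\infty$ bound matches the paper's essentially line for line: both truncate $u$ at a level $\ge \sup_\Omega \phi$ (so the competitor has the correct trace), both observe that $\{\min(u,N) > 0\} = \{u > 0\}$ so the discontinuous term drops out of the energy comparison, both use Young's inequality to exploit $m < p$, and both finish with a De Giorgi--Stampacchia iteration (the paper simply cites Ladyzhenskaya--Ural'tseva for the final step).

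For the H\"older estimate you take a genuinely different, though equally standard, route. The paper compares $u$ with its $\mathcal{A}$-harmonic replacement $h$ in a ball $B_r$, bounds the $g$-defect by $mKM^{m-1}\int_B |u-h| + \epsilon^{-1}\mathcal{L}^n(\{u=0\}\cap B)$ (using that the replacement $h$ is positive so $\chi_{\{h>0\}} - \chi_{\{u>0\}} \le \chi_{\{u=0\}}$), and then by a strict-monotonicity inequality and absorption obtains the decay $\|\nabla(u-h)\|_{L^p(B_r)} \le C r^{n/p}$, from which H\"older continuity follows by a Morrey/Campanato argument. You instead show directly that $u$ is an $\omega$-minimizer of the pure $p$-energy $\int G(X,\nabla\cdot)\,dX$ with additive gauge $\omega(r) = Cr^n$ — the boundedness $0 \le u, \tilde w \le M$ (after truncating the competitor $w$, which can only lower the $G$-energy) makes the whole $g$-term an $O(r^n)$ perturbation — and then invoke the Giaquinta--Giusti quasi-minimizer regularity theory. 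Both arguments are correct; yours reaches for an off-the-shelf black box, while the paper's harmonic-replacement estimate is made explicit because the same comparison inequality (equation \eqref{th 2.2.2} in the source) is reused later in the proofs of Theorems~\ref{t 3.1} and~\ref{t 4.1}, so the authors have a structural reason to establish it by hand rather than cite a general theorem.
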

\begin{proof}
Let us assume that we have already established boundedness of minimizer, i.e. $\Vert u \Vert_{L^{\infty}\left( \Omega \right)} \leq  M$. For
\begin{eqnarray*}
\mathcal{A}\left( X, \xi \right):= \nabla_{\xi} G\left( X, \xi \right),   
\end{eqnarray*}
let $h$ be the solution of boundary value problem
\begin{eqnarray}
\label{th 2.2.1}
\left \{
\begin{array}{rcl}
\div \left( \mathcal{A}\left( X, \nabla h \right) \right)& = & 0  \ \mbox{in} \ B\\
h & = & u \ \mbox{on} \ \partial B, \\   
\end{array}
\right.
\end{eqnarray}
where $B \Subset \Omega$ is fixed ball. By minimality of $u$, (g1) and Mean Value Theorem we have
\begin{eqnarray}
\label{th 2.2.2}
\int_{B}  G(X, \nabla u ) - G(X, \nabla h) dX  & \leq &  \int_{B}  g\left( X, h \right) - g\left( X, u \right) dX  \\ \nonumber 
& = & \int_{B}  f \left(  h^{m} - u^{m} \right) dX  +  \int_{B} Q\left( \chi_{\left\lbrace h > 0\right\rbrace}  - \chi_{\left\lbrace  u > 0 \right\rbrace}\right)  dX \\ \nonumber
& \leq & mKM^{m-1} \int_{B} \vert u - h \vert dX  + \epsilon^{-1}\Leb \left( \left\lbrace u = 0 \right\rbrace \cap B \right).
\end{eqnarray}
We have use that an $\mathcal{A}$-harmonic function with nonnegative boundary  value is  positive and $0 \leq u,h \leq M$ 
(see \cite{HKM}, Chap. 3, Prop. 3.24). From (G3) we have 
\begin{eqnarray}
\label{th 2.2.3}
\langle \mathcal{A}\left( X, \xi \right), \xi \rangle = p G\left( X, \xi \right), 
\end{eqnarray} 
for a.e. $X \in \Omega$ and all $\xi \in \mathbb{R}^{n}$. Thus, from monotonicity, see for instance,  Lemma 3.2 in \cite{Teix01}, we obtain 
\begin{eqnarray}
\label{th 2.2.4}
 \int_{B}  G(X, \nabla u ) - G(X, \nabla h) dX \geq c 
 \int_{B} \vert \nabla \left( u - h\right) \vert^{p} dX 
\end{eqnarray}
where $c=c\left(n,p,G \right)$ is a positive constant. Young inequality and Poincar\'e inequality together yield 
\begin{eqnarray}
\label{th 2.2.5}
mKM^{m-1} \int_{B} \vert u - h \vert dX \leq  \frac{c}{2}\int_{B} \vert \nabla \left( u - h \right) \vert^{p} dX + C\Leb \left( B \right). 
\end{eqnarray}
Thus, if $B$ is a ball of radius $r > 0$, it follows from (\ref{th 2.2.2}), (\ref{th 2.2.4}) and (\ref{th 2.2.5}) that
\begin{eqnarray}
\label{t 2.1.5}
\Vert \nabla \left( u - h\right) \Vert_{L^{p}\left( B \right)} \leq 
 Cr^{\frac{n}{p}}.
\end{eqnarray}
Hence, from Morrey's Theorem (recall $h$ is H\"older continuous by elliptic estimates)  there is a constant $\beta=\beta\left( n, p \right) > 0$ such that $u \in C^{0,\beta}_{\text{loc}}(\Omega)$. 

\par

Let us now turn our attention to $L^\infty$ bounds of  $u$. Let is label 
$$
	j_{_{0}}  := \left \lceil \sup_{\partial \Omega}\phi  \right \rceil, 
$$	
that is, the smallest natural number above $\sup_{\partial \Omega}\phi$.	For each $j \geq j_{0}$ we define the truncated function $u_{j}: \Omega \rightarrow \mathbb{R}$ by
\begin{eqnarray}
\label{th 2.2.6}
u_{j}= \left \{
\begin{array}{rcl}
j, \ \ \mbox{if} \ \ u > j, \\
u, \ \ \mbox{if} \ \ u \leq j.\\
\end{array}
\right. 
\end{eqnarray} 
Clearly, by the choice of $j_{0}$,  $u_{j}\in W_{\phi}^{1,p}\left( \Omega \right)$ and 
$$
	\left\lbrace u_{j} > 0 \right\rbrace = \left\lbrace u > 0 \right\rbrace. 
$$
If we denote $A_{j} := \left\lbrace u > j \right\rbrace$, we have, for each $j > j_0$
\begin{eqnarray}
u = u_{j} \ \ \mbox{in} \ \ A^{c}_{j}\ \ \mbox{and} \ \ u_{j} = j  \ \ \mbox{in} \ \ A_{j}.
\end{eqnarray}
Thus, by minimality of $u$ and (G2), there holds
\begin{eqnarray}
\label{th 2.2.7}
\lambda \int_{A_{j}} \vert \nabla u \vert^{p} dX & \leq & \int_{A_{j}}  G(X, \nabla u ) \\ \nonumber & = &  \int_{\Omega}  G(X, \nabla u ) - G(X, \nabla u_{j}) dX \\ \nonumber
& \leq &  \int_{\Omega}  f \left(  u_{j}^{m} - u^{m} \right) dX  \\ \nonumber
& = & \int_{A_{j}}  f \left(  u^{m} - j^{m} \right)dX.
\end{eqnarray}
Taking into account the elementary inequality
\begin{eqnarray}
u^{m} = \left( u - j + j \right)^{m} \leq 2^{m}\left[ \left( u - j \right)^{m} + j^{m} \right], 
\end{eqnarray}  
we obtain
\begin{eqnarray}
\label{th 2.2.8}
\int_{A_{j}} \vert f \vert \left(  u^{m} - j^{m} \right)dX & \leq & \nonumber C\int_{A_{j}} \vert f \vert \left( u - j \right)^{m}dX + Cj^{m}\Leb \left( A_{j} \right) \\  
& = & C\int_{A_{j}} \vert f \vert \left[ \left( u - j \right)^{+}\right] ^{m}dX + Cj^{m}\Leb \left( A_{j} \right).
\end{eqnarray} 
From the range of truncation we consider, it follows  that $\left( u - j \right)^{+} \in W_{0}^{1,p}\left( \Omega \right)$. Hence, applying H\"older inequality and Gagliardo-Nirenberg inequality (see \cite{GT}, Chap. 7), we find
\begin{eqnarray}
\label{th 2.2.9}
\int_{A_{j}} \vert f \vert \left[ \left( u - j \right)^{+}\right]^{m}dX & \leq & \nonumber K \left[ \Leb \left( A_{j} \right)\right]^{1- \frac{m}{p*}} \Vert \nabla u \Vert_{L^{p}\left( A_{j}\right)}^{m}. \nonumber
\end{eqnarray}
Young inequality gives, then, 
\begin{eqnarray}
\label{th 2.2.10}
K \left[ \Leb \left( A_{j} \right)\right]^{1- \frac{m}{p*}} \Vert \nabla u \Vert_{L^{p}\left( A_{j}\right)}^{m} \leq C\left[ \Leb \left( A_{j} \right)\right]^{\frac{p}{p-m}- \frac{pm}{p*\left( p-m\right)}} + \frac{\lambda}{2}\Vert \nabla u \Vert_{L^{p}\left( A_{j} \right)}^{p} .
\end{eqnarray} 
Combining (\ref{th 2.2.7}), (\ref{th 2.2.8}) and (\ref{th 2.2.10}) we obtain
\begin{eqnarray}
\int_{A_{j}} \vert \nabla u \vert^{p} dX \leq C j^{m} \left[ \Leb \left( A_{j} \right)\right]^{1 - \frac{p}{N} + \varepsilon},
\end{eqnarray} 
where $\varepsilon = \frac{p^{2}}{n\left( p - m \right) }$ and (see (\ref{bounded of Lp norm}) and (\ref{bounded of Lp norm of gradient}) substituting $I_{0}$ by $\int_{\Omega} F(X, \phi, \nabla \phi) dX$)
\begin{eqnarray}
\Vert u \Vert_{L^{1}\left( A_{j_{0}} \right)} \leq \left[ \Leb \left( A_{j_{0}} \right)\right]^{\frac{p-1}{p}}\Vert u \Vert_{L^{p}\left( A_{j_{0}} \right)} \leq C.
\end{eqnarray}
Boundedness of $u$ now follows from a general machinery, see for instance,  \cite{ON}, Chap. 2, Lemma 5.2, Page 71.
\end{proof}
\bigskip

At this stage of the program, an important consequence of Theorem \ref{th 2.2} is the fact that the positivity set of $u$, $\{ u > 0 \}$ is open. Next Theorem gives the Euler Lagrange equation satisfies therein. 

 \begin{theorem} 
\label{t 2.3}
Let $u$ be a minimizer of (\ref{Funct Intro}). Within the open set $\{u > 0 \}$, $u$ satisfies
$$
	\div  \left ( \mathcal{A}\left( X, \nabla u \right) \right ) = mf(X)u^{m-1}
$$
in the distributional sense.
\end{theorem}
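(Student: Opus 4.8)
The plan is to establish the Euler–Lagrange equation by the classical inner variation argument: since $\{u>0\}$ is open (by continuity of $u$, Theorem~\ref{th 2.2}), any test function $\varphi \in C_c^\infty(\{u>0\})$ has the property that for $|t|$ small, $u + t\varphi > 0$ on $\operatorname{supp}\varphi$, so that $\chi_{\{u+t\varphi>0\}} = \chi_{\{u>0\}}$ on $\Omega$. Hence the discontinuous term $Q\chi_{\{u>0\}}$ contributes nothing to the first variation, and the functional becomes differentiable along this one-parameter family. Concretely, I would set $j(t) := \mathfrak{F}(u+t\varphi)$ and show $j$ is differentiable at $t=0$ with $j'(0) = 0$ by minimality.

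The key steps, in order: (i) Fix $\varphi \in C_c^\infty(\{u>0\})$; since $\operatorname{supp}\varphi$ is a compact subset of the open set $\{u>0\}$ and $u$ is continuous, $\min_{\operatorname{supp}\varphi} u =: \delta > 0$, so for $|t| < \delta / \|\varphi\|_\infty$ we have $u + t\varphi \ge 0$ everywhere and $> 0$ exactly where $u>0$. Thus $g(X, u+t\varphi) - g(X,u) = f(X)\big((u+t\varphi)^m - u^m\big)$ pointwise, with no jump term. (ii) Differentiate under the integral sign: using (G3), $\xi \mapsto G(X,\xi)$ is $C^1$ with $\nabla_\xi G(X,\xi) = \mathcal{A}(X,\xi)$, and the growth bound $|\mathcal{A}(X,\xi)| \le C|\xi|^{p-1}$ (a consequence of (G2)–(G3) and convexity) together with $\nabla u \in L^p$ gives a dominating function for the difference quotient of the $G$-term; similarly $u \in L^\infty_{\mathrm{loc}}$ and $m \ge 1$ handle the $f(u^+)^m$ term, whose $u$-derivative is $mf(X)u^{m-1}$ on $\{u>0\}$. (iii) Conclude
$$
0 = j'(0) = \int_{\{u>0\}} \mathcal{A}(X,\nabla u)\cdot \nabla\varphi \, dX + \int_{\{u>0\}} mf(X)u^{m-1}\varphi \, dX,
$$
which, since $\varphi$ was arbitrary and has sign $\pm$ both allowed (minimality forces $j'(0)=0$, not merely $\ge 0$), is precisely the statement that $\operatorname{div}\mathcal{A}(X,\nabla u) = mf(X)u^{m-1}$ in $\mathcal{D}'(\{u>0\})$.

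The main obstacle is the rigorous justification of differentiation under the integral sign for the gradient term — i.e., producing an integrable dominating function for $t^{-1}\big(G(X,\nabla u + t\nabla\varphi) - G(X,\nabla u)\big)$ uniformly in small $t$. This follows from the mean value theorem in $\xi$ applied to the $C^1$, $p$-homogeneous function $G(X,\cdot)$: the difference quotient equals $\mathcal{A}(X, \nabla u + \theta t \nabla\varphi)\cdot\nabla\varphi$ for some $\theta \in (0,1)$, and the bound $|\mathcal{A}(X,\eta)| \le C(n,p,\lambda)|\eta|^{p-1}$ gives domination by $C(|\nabla u| + |\nabla\varphi|)^{p-1}|\nabla\varphi| \in L^1(\Omega)$ since $\nabla u \in L^p$ and $\nabla\varphi \in L^\infty$ with compact support. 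One should also note that the growth bound on $\mathcal{A}$ is standard for convex, $p$-homogeneous kernels satisfying (G2); alternatively one may invoke the monotonicity/growth estimates already referenced via Lemma~3.2 in \cite{Teix01}. Everything else — linearity of the variation, arbitrariness of the sign of $\varphi$, density of smooth test functions — is routine.
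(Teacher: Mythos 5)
Your proposal is correct and follows essentially the same route as the paper: fix a test function compactly supported in the open set $\{u>0\}$, observe that small perturbations $u+t\varphi$ leave the set $\{u>0\}$ and hence the jump term $Q\chi_{\{u>0\}}$ unchanged, and pass to the limit in the difference quotient of the remaining (smooth) part of the functional. You are in fact somewhat more careful than the paper's terse proof in two respects worth noting: you explicitly justify differentiation under the integral sign via the mean-value theorem and the $(p-1)$-growth of $\mathcal{A}$, and you explicitly invoke both signs of $\varphi$ (or equivalently $t$ of either sign) to upgrade the one-sided minimality inequality to the equality $j'(0)=0$, a point the paper leaves implicit.
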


\begin{proof}
Fixed  $\zeta \in C_{0}^{\infty}\left( \lbrace u > 0 \rbrace \right)$, there is a $0 < \varepsilon_0 \ll 1$, sufficiently small such that
\begin{eqnarray}
\left\lbrace u + \varepsilon \zeta > 0 \right\rbrace = \left\lbrace u > 0 \right\rbrace, 
\end{eqnarray}
for all $ 0 < \varepsilon \le \varepsilon_0.$ We can easily write,
\begin{eqnarray*}
\frac{1}{\varepsilon}\int\limits_{\left\lbrace u > 0 \right\rbrace}  F(X, u + \varepsilon \zeta , \nabla \left( u + \varepsilon \zeta \right) ) - F(X, u, \nabla u)  & = & \nonumber \frac{1}{\varepsilon}\int\limits_{\left\lbrace u > 0 \right\rbrace}  G(X, \nabla \left( u + \varepsilon \zeta \right) ) - G(X, \nabla u)   \\ 
& + & \int\limits_{\left\lbrace u > 0 \right\rbrace } f\left( X \right) \frac {\left( u + \varepsilon \zeta \right)^{m} - u^{m} }{\varepsilon}. 
\end{eqnarray*}
Taking $\varepsilon \rightarrow 0$, and using the minimality of $u$, we obtain
\begin{eqnarray*}
0 & = & \nonumber \int_{\left\lbrace u > 0 \right\rbrace} \nabla_{\xi} G\left( X, \nabla u \right)\cdot \nabla \zeta dX + \int_{\left\lbrace u > 0 \right\rbrace}  mf\left( X \right)u^{m-1}\zeta dX \\ \nonumber
\end{eqnarray*}  
and the result follows.
\end{proof}
\vspace{0,5cm}
\section{Upper and lower gradient bounds} \label{S Lip and ND}

In the previous Section we have shown minimizers are $C^{0,\beta}$ continuous in $\Omega$, for some unknown $\beta < 1$. From the discontinuity of the functional $\mathfrak{F}$, along the free-surface, it is also possible to check that minimizers are not $C^1$-regular through the zero level surface $\partial \{u > 0 \}$. Thus the optimal regularity one should hope for $u$ is Lipschitz continuity. This is the contents of next Theorem.

\begin{theorem} 
\label{t 3.1}
Given a subdomain $\Omega' \Subset \Omega$, there exists a constant $C>0$ that depends only on $\Omega'$ and universal constants, such that
$$
	\| \nabla u \|_{L^\infty(\Omega')} \le C.
$$ 
\end{theorem}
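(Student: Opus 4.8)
The plan is to prove local Lipschitz continuity by a standard comparison/scaling argument, reducing the estimate to a growth control near the free boundary. Fix $\Omega' \Subset \Omega$ and let $x_0 \in \Omega' \cap \{u>0\}$; set $d := \operatorname{dist}(x_0, \partial\{u>0\})$, and assume first that $d$ is smaller than a fixed fraction of $\operatorname{dist}(\Omega',\partial\Omega)$ (the complementary case, where $u$ is $\mathcal{A}$-harmonic-type away from the free boundary on a fixed-size ball, follows directly from interior $C^{1,\alpha}$ elliptic estimates for the equation in Theorem \ref{t 2.3}, together with the $L^\infty$ bound $M$ from Theorem \ref{th 2.2}). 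Since $B_d(x_0) \subset \{u>0\}$, by Theorem \ref{t 2.3} the function $u$ solves $\operatorname{div}(\mathcal{A}(X,\nabla u)) = m f(X) u^{m-1}$ there, and interior gradient estimates for this quasilinear degenerate equation give
\[
	|\nabla u(x_0)| \le C\left( \frac{1}{d}\,\|u\|_{L^\infty(B_d(x_0))} + d^{\frac{1}{p-1}}\, \big(K M^{m-1}\big)^{\frac{1}{p-1}} \right).
\]
The second term is harmlessly bounded (as $d$ is bounded), so the whole matter reduces to showing the linear growth bound
\[
	\sup_{B_d(x_0)} u \le C\, d
\]
with $C$ universal.

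To prove this linear growth, I would pick a free boundary point $z_0 \in \partial\{u>0\}$ with $|x_0 - z_0| = d$ and work on balls $B_r(z_0)$ for $r \in (0, r_0)$, $r_0$ universal. The key device is comparison with the $\mathcal{A}$-harmonic replacement $h$ of $u$ in $B_r(z_0)$, exactly as in the proof of Theorem \ref{th 2.2}: minimality plus (g1) plus the monotonicity inequality \eqref{th 2.2.4} give
\[
	c \int_{B_r(z_0)} |\nabla(u-h)|^p \, dX \le m K M^{m-1} \int_{B_r(z_0)} |u-h| \, dX + \epsilon^{-1}\mathscr{L}^n(\{u=0\}\cap B_r(z_0)),
\]
and with Poincaré/Young this yields $\fint_{B_r(z_0)} |\nabla(u-h)|^p \, dX \le C(r^p + 1)$, hence an $L^p$-averaged closeness $\fint_{B_r} |u-h| \le C r$ (using $u=h$ on $\partial B_r$ and $h\ge 0$). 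Now $h$ is $\mathcal{A}$-harmonic, nonnegative, and vanishes at $z_0$ (since $u(z_0)=0$ and $u-h$ is small there in an averaged sense — more precisely one controls $h(z_0)$ by the comparison), so Harnack/interior estimates for $h$ force $\sup_{B_{r/2}(z_0)} h \le C\, (h(z_0) + r) \le C r$ provided one first shows $h(z_0) \lesssim r$. Combining $\sup_{B_{r/2}} u \le \sup_{B_{r/2}} h + \sup_{B_{r/2}} |u-h|$ and then upgrading the averaged $L^p$ bound on $u-h$ to a pointwise one via the equation it satisfies (or via De Giorgi-type iteration on the subsolution $(u-h-Cr)^+$), one obtains $\sup_{B_{r/2}(z_0)} u \le C r$. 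Taking $r = 2d$ gives the desired $\sup_{B_d(x_0)} u \le C d$.

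The main obstacle I anticipate is making the passage from the energy (averaged $L^p$) comparison $\|\nabla(u-h)\|_{L^p(B_r)} \le C r^{n/p}$ to a genuine pointwise linear bound on $u$ near $z_0$, and in particular controlling $h(z_0)$: one needs that $u$ does not detach from its $\mathcal{A}$-harmonic replacement in a way that hides large growth. The clean way around this is the classical Alt–Caffarelli dichotomy argument: introduce the quantity $S(r) := r^{-1}\sup_{B_r(z_0)} u$ and show, via the comparison estimate above, that if $S(r)$ is large then a fixed fraction of it is inherited at scale $r/2$ while simultaneously $\mathscr{L}^n(\{u=0\}\cap B_r)$ would be forced to be super-small, contradicting (after iteration and using the nondegeneracy of the $\mathcal{A}$-harmonic competitor from below) the mere continuity of $u$; alternatively one invokes that a nonnegative $\mathcal{A}$-subsolution that is $o(r)$ on a ball where the right-hand side is bounded must already be controlled by elliptic regularity. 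The degenerate ($p\neq 2$) and $X$-dependent nature of $\mathcal{A}$ means one must quote the appropriate DiBenedetto–Tolksdorf interior $C^{1,\alpha}$ estimates and the Harnack inequality for $\mathcal{A}$-harmonic functions (as in \cite{HKM}), rather than linear theory, but these are available off the shelf and the scaling structure (G3) is exactly homogeneous of degree $p$, so the argument goes through with universal constants.
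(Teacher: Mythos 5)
Your proposal takes a genuinely different route from the paper's. You reduce the Lipschitz bound to the linear growth estimate $\sup_{B_d(x_0)}u \le Cd$ (correct, via DiBenedetto--Tolksdorf interior gradient estimates), and then try to establish that growth by an iterative Alt--Caffarelli dichotomy built on the energy comparison with the $\mathcal{A}$-harmonic replacement $h$. The paper instead runs a compactness/blow-up contradiction: it assumes $u(X_j)/\dist(X_j,\partial\{u>0\}) \to \infty$, and via a Harnack-chain maximization over the auxiliary set $A_j$ it selects interior points $Z_j$ and free boundary points $W_j$ for which simultaneously $u(Z_j)/r_j \to \infty$ and $\sup_{B_{r_j/2}(W_j)}u \le 2u(Z_j)$. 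This second, \emph{upper} normalization is the crucial ingredient: it makes the rescaled functions $u_j(X)=u(W_j+\tfrac{r_j}{2}X)/u(Z_j)$ uniformly bounded (hence compact in $C^{0,\alpha}$), so that the energy estimate forces the limit $u_0$ to coincide with its $\mathcal{A}$-harmonic replacement and the strong maximum principle yields $u_0 \equiv 0$, contradicting the lower normalization $\max u_j \ge c'/2$.

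There is a genuine gap in your sketch, and you have put your finger on it yourself. The energy inequality gives only $\fint_{B_r}|\nabla(u-h)|^p \le C$, and since the paper works in the range $p < n$, this does \emph{not} imply pointwise control of $u-h$ (Morrey is unavailable; Sobolev gives $L^{p^*}$, not $L^\infty$). In particular you cannot conclude $h(z_0)\lesssim r$: with $h=u$ on $\partial B_r$, Harnack only gives $h(z_0)\sim \inf_{B_{r/2}}h$, which could be comparable to $\sup_{B_r}u$ — exactly the quantity you are trying to bound. The dichotomy as you phrase it (``$S(r)$ large forces $\mathscr{L}^n(\{u=0\}\cap B_r)$ super-small, contradicting continuity'') does not close: small zero-set measure does not conflict with $z_0$ being a free boundary point, and the alternative you mention (``a nonnegative $\mathcal{A}$-subsolution that is $o(r)$ is already controlled'') is circular. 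The correct Alt--Caffarelli step is different: one shows that if $\sup_{B_r(z_0)}u \ge Mr$ with $M$ large universal, then $u>0$ throughout $B_{r/2}(z_0)$, contradicting $z_0\in\partial\{u>0\}$ — and proving that implication requires a \emph{pointwise} lower comparison $u\ge h - Cr$ on $B_{r/2}$, which is precisely what does not follow from the averaged energy bound and what you would need a separate barrier or iteration argument to produce. The paper's blow-up argument with the carefully normalized sequence is the device that substitutes for this missing pointwise comparison; without it, or without supplying the AC barrier construction in detail, the proposal is incomplete.
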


\begin{proof}
Let us suppose, for the purpose of contradiction, that there exists a sequence of points $X_{j} \in \Omega' \cap \left\lbrace u > 0 \right\rbrace$ such that
\begin{eqnarray}
\label{t 3.1.1}
X_{j} \rightarrow \partial \{ u > 0 \} \ \ \ \mbox{and} \ \ \ \frac{u\left( X_{j} \right)}{\dist\left( X_{j}, \partial \{ u > 0 \} \right)}\nearrow \infty.  
\end{eqnarray}
We denote 
\begin{eqnarray*}
U_{j}:= u\left( X_{j} \right) \ \ \ \mbox{and} \ \ \ d_{j}:= \dist\left( X_{j}, \partial \{ u > 0 \} \right).  
\end{eqnarray*}
For each $j$, let $Y_{j} \in \partial\{ u > 0 \}$  be such that 
\begin{eqnarray*}
d_{j} = \vert X_{j} - Y_{j}\vert.  
\end{eqnarray*}
Recall we have proven in Theorem \ref{t 2.3} that
\begin{eqnarray*}
\div \left( \mathcal{A}\left( X, \nabla u \right) \right) = mf \left( X \right)u^{m-1} \ \ \mbox{in} \ \ \left\lbrace u > 0 \right\rbrace.
\end{eqnarray*}
Thus, by Harnack's inequality, universal boundedness of $u$ and (g1), there exist  universal constants  $C, ~ c>0$, such that
\begin{eqnarray*}
C d_{j} + \inf\limits_{B_{\frac{3}{4}d_{j}}\left( X_{j}\right)} u \geq cU_{j}.  
\end{eqnarray*}
In turn, we have
\begin{eqnarray}
\label{t 3.1.a}
\sup\limits_{B_{\frac{1}{4}d_{j}}\left( Y_{j}\right)} u \geq cU_{j} - C d_{j}.
\end{eqnarray}
Consider the set
\begin{eqnarray}
\label{t 3.1.c}
A_{j}:= \left\lbrace Z \in B_{d_{j}}\left( Y_{j}\right): \dist \left( Z, \partial \left\lbrace u > 0 \right\rbrace \right) \leq \frac{1}{3}\dist \left( Z, \partial B_{d_{j}}\left( Y_{j}\right) \right) \right\rbrace. 
\end{eqnarray}
First we claim that $B_{\frac{d_{j}}{4}}\left( Y_{j}\right) \subset A_{j}$. In fact, if $\vert Z - Y_{j}\vert \leq \frac{d_{j}}{4}$, then
\begin{eqnarray}
\label{}
\frac{1}{3}\dist \left( Z, \partial B_{d_{j}}\left( Y_{j}\right) \right) \geq \frac{1}{3}\frac{3d_{j}}{4} = \frac{d_{j}}{4} \geq \dist \left( Z, \partial \left\lbrace u > 0 \right\rbrace \right). 
\end{eqnarray}
Thus,  
\begin{eqnarray} 
\label{}
M_{j} &:=& \sup_{Z \in A_{j}} \dist \left( Z, \partial B_{d_{j}}\left( Y_{j}\right) \right)u\left( Z\right) \\
	&=&  \dist \left( Z_{j}, \partial B_{d_{j}}\left( Y_{j}\right) \right)u\left( Z_{j}\right) \\
	&\geq& \frac{3}{4}\sup\limits_{B_{\frac{1}{4}d_{j}}\left( Y_{j}\right)} u. 
\end{eqnarray}
Therefore,
\begin{eqnarray}
\label{}
u\left( Z_{j}\right) \geq \frac{d_{j}}{\dist \left( Z_{j}, \partial B_{d_{j}}\left( Y_{j}\right) \right)}\frac{3}{4}\sup\limits_{B_{\frac{1}{4}d_{j}}\left( Y_{j}\right)} u \geq \frac{3}{4}\sup\limits_{B_{\frac{1}{4}d_{j}}\left( Y_{j}\right)} u. 
\end{eqnarray}
Hence, using (\ref{t 3.1.a}) we have
\begin{eqnarray}
\label{t 3.1.i}
u\left( Z_{j}\right)\geq \frac{3}{4}\left( cU_{j} - C d_{j}\right). 
\end{eqnarray}
For each $j$, let $W_{j} \in \partial \left\lbrace u > 0 \right\rbrace$ be such that
\begin{eqnarray}
\label{t 3.1.f}
r_{j}:= \vert Z_{j} - W_{j} \vert = \dist \left( Z_{j}, \partial \left\lbrace u > 0 \right\rbrace \right) \leq \frac{1}{3}\dist \left( Z_{j}, \partial B_{d_{j}}\left( Y_{j}\right) \right). 
\end{eqnarray}
Using (\ref{t 3.1.f}) we conclude that
\begin{eqnarray}
\label{t 3.1.g}
r_{j}\leq \frac{1}{3}\left(d _{j} - \vert Z_{j} - Y_{j} \vert \right) \leq \frac{1}{3}\left( d_{j} - r_{j}\right). 
\end{eqnarray}
That is,
\begin{eqnarray}
\label{t 3.1.h}
\frac{1}{r_{j}} \geq \frac{4}{d_{j}}. 
\end{eqnarray}
From (\ref{t 3.1.i}) and (\ref{t 3.1.h}) we have, for $j$ sufficiently large, as to
$$
	\frac{U\left( j\right)}{d_{j}} \geq \frac{C}{c},
$$
the following lower estimate
\begin{eqnarray}
\label{j}
\frac{u\left( Z_{j}\right)}{r_{j}}\geq \frac{1}{r_{j}}\frac{3}{4}\left( cU_{j} - C d_{j}\right)\geq 3 \left( c\frac{U_{j}}{d_{j}} - C \right). 
\end{eqnarray}
We have proven that  
\begin{eqnarray}
\label{t 3.1.j}
\frac{u\left( Z_{j}\right)}{r_{j}} \rightarrow \infty. 
\end{eqnarray}
If $X \in B_{2r_{j}}\left( W_{j}\right)$ we obtain, see (\ref{t 3.1.f}), 
\begin{eqnarray}
\label{}
\vert X - Y_{j} \vert \leq \vert X - W_{j} \vert + \vert W_{j} - Z_{j} \vert + \vert Z_{j} - Y_{j} \vert \leq 2r_{j} + r_{j}+ \vert Z_{j} - Y_{j} \vert \leq d_{j}.  
\end{eqnarray}
Thus, $B_{2r_{j}}\left( W_{j}\right) \subset B_{d_{j}}\left( Y_{j} \right)$. Also we have
\begin{eqnarray}
\label{}
\dist\left( X, \partial \left\lbrace u > 0\right\rbrace \right) \leq \frac{r_{j}}{2},
\end{eqnarray} 
for all $X \in B_{\frac{r_{j}}{2}}\left( W_{j}\right)$. Triangular inequality and (\ref{t 3.1.f}) then yield
\begin{eqnarray}
\label{}
\dist\left( X, \partial B_{d_{j}}\left( Y_{j}\right) \right) &\geq & \nonumber
\dist\left( Z_{j}, \partial B_{d_{j}}\left( Y_{j}\right) \right) - \vert Z_{j} - X \vert \\ \nonumber
& \geq & \dist\left( Z_{j}, \partial B_{d_{j}}\left( Y_{j}\right) \right) - \frac{3r_{j}}{2}\\ \nonumber
& \geq & \frac{3r_{j}}{2}\\ \nonumber
& \geq & \frac{1}{2}\dist\left( Z_{j}, \partial B_{d_{j}}\left( Y_{j}\right) \right).  
\end{eqnarray} 
We conclude that $B_{\frac{r_{j}}{2}}\left( W_{j}\right) \subset A_{j}$ and
\begin{eqnarray}
\label{}
u\left( Z_{j} \right) \geq \frac{M_{j}}{\dist\left( Z_{j}, \partial B_{d_{j}}\left( Y_{j}\right) \right)}  \geq \frac{\dist \left( X, \partial B_{d_{j}}\left( Y_{j}\right) \right)u\left( X\right)}{\dist\left( Z_{j}, \partial B_{d_{j}}\left( Y_{j}\right) \right)} \geq  \frac{1}{2}u\left( X \right),  
\end{eqnarray}
for all $X \in B_{\frac{r_{j}}{2}}\left( W_{j}\right)$. From above inequality we obtain
\begin{eqnarray}
\label{t 3.1.m}
\sup_{B_{\frac{r_{j}}{2}}\left( W_{j}\right)}u \leq 2u\left( Z_{j} \right).
\end{eqnarray} 
Since $B_{r_{j}}\left( Z_{j}\right) \subset \left\lbrace u > 0 \right\rbrace$, by Harnack inequality, there exist universal constants $C',c' >0$ such that
\begin{eqnarray}
\inf\limits_{B_{\frac{3}{4}r_{j}}\left( Z_{j}\right)} u \geq c'u\left( Z_{j} \right)  - C' r_{j}.
\end{eqnarray}
Therefore, we conclude,
\begin{eqnarray}
\label{t 3.1.n}
\sup\limits_{B_{\frac{1}{4}r_{j}}\left( W_{j}\right)} u \geq c'u\left( Z_{j} \right) - C' r_{j}.
\end{eqnarray} 
For each $j$, consider the normalized function $u_{j}: B_{1}\left( 0 \right)\rightarrow \mathbb{R}$, defined as
\begin{eqnarray}
\label{t 3.1.2}
u_{j}\left( X \right):= \frac{u\left( W_{j} + \frac{r_{j}}{2}X\right)}{u\left( Z_{j} \right)} . 
\end{eqnarray}
Notice that from (\ref{t 3.1.m}), (\ref{t 3.1.j}) and (\ref{t 3.1.n}), we have (for $j$ sufficiently large)
\begin{eqnarray}
\label{t 3.1.p}
\max_{B_{1}\left( 0 \right)}u_{j} \leq 2, \quad \max_{B_{1}\left( 0 \right)}u_{j} \geq \frac{c'}{2}, \quad  u_{j}\left(0 \right) = 0. 
\end{eqnarray}
Let $h$ be the  $\mathcal{A}$-harmonic function in $B_{\frac{1}{2}r_{j}}\left( W_{j}\right)$ taking boundary data equal $u$. By (\ref{th 2.2.2}), as in Theorem \ref{th 2.2}, we have
\begin{eqnarray}
\label{t 3.1.b}
\int\limits_{B_{\frac{1}{2}r_{j}}\left( W_{j}\right)}  \langle \mathcal{A}\left( X, \nabla u \right), \nabla u \rangle - \langle \mathcal{A}\left( X, \nabla h \right), \nabla h \rangle  dX \leq K\int\limits_{B_{\frac{1}{2}r_{j}}\left( W_{j}\right)} \vert u^{m} - h^{m} \vert dX + C r^{n}_{j}.
\end{eqnarray}
Analogously, for each $j$ sufficiently large,   consider the normalized  function $h_{j}: B_{1}\left( 0 \right)\rightarrow \left( 0, 2\right)$, to be
\begin{eqnarray}
\label{t 3.1.2}
h_{j}\left( X \right):= \frac{h\left( W_{j} + \frac{r_{j}}{2}X\right)}{u\left( Z_{j} \right)}. 
\end{eqnarray}
Easily we verify that
\begin{eqnarray}
\label{t 3.1.3}
\left \{
\begin{array}{rcll}
\div \left( \mathcal{A}\left( W_{j} + \frac{r_{j}}{2}X, \nabla h_{j} \right) \right)& = & 0   & \mbox{in} \ B_{1}\left( 0 \right) \\
h_{j} & = & u_{j}  & \mbox{on} \ \partial B_{1}\left( 0 \right). \\ 
\end{array}
\right.
\end{eqnarray} 
and $h_{j}$ is the unique minimizer of
\begin{eqnarray}
\label{t 3.1.8} 
\mathfrak{F}_{j}\left( v \right):= \int_{B_{1}\left( 0\right)}\langle \mathcal{A}\left( W_{j} + \frac{r_{j}}{2}X, \nabla v \right), \nabla v \rangle dX, 
\end{eqnarray}
among functions $v \in W_{0}^{1, p}\left( B_{1}\right) + h_{j}$. Also, from the normalization, 
\begin{eqnarray}
\label{t 3.1.4}
\nabla u_{j}\left( X \right) = \frac{r_{j}}{2u\left( Z_{j} \right) } \nabla  u\left( W_{j} + \dfrac{1}{2}r_{j}X\right), \quad  \nabla h_{j}\left( X \right) = \frac{r_{j}}{2u\left( Z_{j} \right)} \nabla  h\left( W_{j} + \frac{1}{2}r_{j}X\right), 
\end{eqnarray}
for all $X \in B_{1}\left( 0 \right)$. By change of variables and (G3) we obtain
\begin{eqnarray*}
\int\limits_{B_{\frac{1}{2}r_{j}}\left( W_{j}\right)}\vert u^{m} - h^{m} \vert dX &\leq& u^{m}\left( Z_{j} \right)C\left( m \right) \left( \frac{r_{j}}{2}\right)^{n}\int\limits_{B_{1}\left( 0\right)}\vert u_{j} - h_{j} \vert dX \\
&\leq& C\left(n,m\right) u^{m}\left( Z_{j} \right)\left( \frac{r_{j}}{2}\right)^{n}.
\end{eqnarray*} 
Similarly,
\begin{eqnarray*}
\int\limits_{B_{\frac{1}{2}r_{j}}\left( W_{j}\right)} \langle \mathcal{A}\left( X, \nabla u \right), \nabla u \rangle dX = \left(\frac{r_{j}}{2u\left( Z_{j}\right)}\right)^{-p} \left( \frac{r_{j}}{2}\right)^{n}\int\limits_{B_{1}\left( 0\right)}\langle \mathcal{A}\left( W_{j} + \frac{r_{j}}{2}X, \nabla u_{j}\right), \nabla u_{j}\rangle dX.
\end{eqnarray*} 
We conclude therefore that
\begin{eqnarray}
\label{t 3.1.5}
\int_{B_{1}\left( 0\right)}\langle \mathcal{A}\left( W_{j} + \frac{r_{j}}{2}X, \nabla u_{j}\right), \nabla u_{j}\rangle - \langle \mathcal{A}\left( W_{j} + \frac{r_{j}}{2}X, \nabla h_{j}\right), \nabla h_{j}\rangle dX \leq l_{j}\rightarrow 0. 
\end{eqnarray}
where 
\begin{eqnarray}
l_{j}:= Cr^{m}_{j}\left(\frac{r_{j}}{u\left( Z_{j}\right)} \right)^{p-m} = Cu^{m}\left( Z_{j} \right)\left(\frac{r_{j}}{u\left( Z_{j}\right)} \right)^{p} \leq C \left(\frac{r_{j}}{u\left( Z_{j}\right)} \right)^{p}.
\end{eqnarray}
Moreover, since $0 \leq u_{j} \leq 2$ in $B_{1}$, we obtain by Morrey' Theorem (as in the proof of Theorem \ref{th 2.2}) that $u_{j}$ and $h_{j}$ are uniform H\"older continuous in $B_{\frac{8}{9}}\left( 0 \right)$. Thus, up to a subsequence,
\begin{eqnarray*}
\label{t 3.1.6}
u_{j} \rightarrow u_{0}\ \ \mbox{and} \ \ h_{j} \rightarrow h_{0},
\end{eqnarray*}
uniformly in $\overline{B_{\frac{7}{9}}}\left( 0 \right)$ and weakly in $W^{1,p}$. Passing the limit in (\ref{t 3.1.3}) and (\ref{t 3.1.8}),  we find 
\begin{eqnarray}
\label{t 3.1.7}
\left \{
\begin{array}{rcll}
\div \left( \mathcal{A}\left( W_{0}, \nabla h_{0} \right) \right)& = & 0 & \mbox{in} \ B_{1}\left( 0 \right) \\
h_{0} & = & u_{0} & \mbox{on} \ \partial B_{1}\left( 0 \right), \\ 
\end{array}
\right.
\end{eqnarray}
Up to a subsequence, $W_{j} \rightarrow W_{0} \in \partial \{ u>0 \}$, and $h_0$ is the unique minimizer of 
\begin{eqnarray*}
\label{t 3.1.9}
\mathfrak{F}\left( v \right):= \int_{B_{1}\left( 0\right)}\langle \mathcal{A}\left( W_{0}, \nabla v \right), \nabla v \rangle dX.
\end{eqnarray*}
 From (\ref{t 3.1.5}), we find
\begin{eqnarray*}
 u_{0} = h_{0}. 
\end{eqnarray*}
Thus, $u_{0}$ itself solves the elliptic PDE
\begin{eqnarray*}
\div \left( \mathcal{A}\left( W_{0}, \nabla u_{0} \right) \right)& = & 0  \ \ \mbox{in} \ \ B_{1}\left( 0 \right).
\end{eqnarray*} 
Therefore, since $u_{0}\left( 0 \right)=0$ and $u_{0} \geq 0$, we obtain, by the strong maximum principle, $u_{0} \equiv 0$, which contradicts (\ref{t 3.1.p}). Theorem \ref{t 3.1} is proven.
\end{proof}

The optimal regularity estimate on $u$ established in Theorem \ref{t 3.1} implies that $u$ grows at most linearly away from free surface  $\partial \{u> 0 \}$. From energy considerations, we will show next that minimizers do grow precisely at a linear fashion. 

\begin{theorem} 
\label{t 3.2}
Given a subdomain $\Omega' \Subset \Omega$, there exist constants $c, ~ d_0 >0$ that depend only on $\Omega'$ and universal constants, such that if $X \in \{ u > 0 \} \cap \Omega'$ and $\dist (X, \partial \{u > 0 \})\leq d_{0}$, then
$$
	u(X) \ge c \cdot \dist (X, \partial \{u > 0 \}). 
$$ 
\end{theorem}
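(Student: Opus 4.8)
The plan is to argue by contradiction via a blow-up/compactness scheme, exactly dual to the one used for the upper bound in Theorem \ref{t 3.1}, but now extracting a \emph{lower} linear rate from an \emph{energy} (rather than pure PDE) consideration. Suppose the statement fails: then there is a sequence $X_j \in \{u>0\}\cap\Omega'$ with $d_j := \dist(X_j, \partial\{u>0\})\to 0$ and $u(X_j)/d_j \to 0$. Pick $Y_j\in\partial\{u>0\}$ realizing the distance $d_j = |X_j-Y_j|$. The key quantity to control is
\begin{eqnarray*}
s_j := \sup_{B_{d_j}(Y_j)} u,
\end{eqnarray*}
and I expect the first step to be a \emph{Harnack-chain} argument inside the ball $B_{d_j}(X_j)\subset\{u>0\}$, combined with the interior equation $\div(\mathcal{A}(X,\nabla u)) = mf u^{m-1}$ and the universal $L^\infty$ bound of Theorem \ref{th 2.2}, to show $s_j \le C(u(X_j) + d_j)$, hence $s_j/d_j \to 0$ as well (after possibly re-centering around a point near $Y_j$ on the segment $[X_j,Y_j]$). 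So the hypothesis forces $u$ to be \emph{much smaller than linear} on the whole ball $B_{d_j}(Y_j)$.

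Next I would rescale: set
\begin{eqnarray*}
u_j(X) := \frac{u(Y_j + d_j X)}{s_j}, \qquad X\in B_1(0).
\end{eqnarray*}
Then $0\le u_j\le 1$, $\sup_{B_1} u_j = 1$, and $u_j(0)=0$. The rescaled functions $u_j$ are minimizers of a rescaled functional $\mathfrak{F}_j$; the gradient term rescales with a factor $(d_j/s_j)^{-p}\to\infty$, which is precisely the mechanism that will kill the free-boundary penalty term. Concretely, comparing $u_j$ with its own $\mathcal{A}$-harmonic replacement $h_j$ on $B_1$ (boundary data $u_j$), the argument of Theorem \ref{th 2.2}/\ref{t 3.1} gives
\begin{eqnarray*}
\int_{B_1}\langle \mathcal{A}(Y_j + d_j X,\nabla u_j),\nabla u_j\rangle - \langle \mathcal{A}(Y_j + d_j X,\nabla h_j),\nabla h_j\rangle\, dX \;\le\; C\Big(\tfrac{d_j}{s_j}\Big)^{p}\big(1 + \text{l.o.t.}\big) \;\longrightarrow\; 0,
\end{eqnarray*}
using $s_j/d_j\to 0$, $0\le u_j\le 1$, and $|f|\le K$. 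By the uniform Hölder estimates (Morrey, via the energy bound on $\nabla u_j$) and strict convexity/monotonicity of $\mathcal{A}$, $u_j \to u_0$ uniformly and weakly in $W^{1,p}_{\loc}(B_1)$ with $u_j - h_j \to 0$, so $u_0$ is $\mathcal{A}(Y_0,\cdot)$-harmonic in $B_1$ (with $Y_0 = \lim Y_j$), $0\le u_0\le 1$, $u_0(0)=0$. By the strong maximum principle $u_0\equiv 0$, contradicting $\sup_{B_1}u_j = 1$ (which passes to the limit since convergence is uniform on, say, $\overline{B_{7/9}}$ — so one should choose the normalizing sup over $B_{8/9}(Y_j)$ rather than $B_{d_j}(Y_j)$ to keep the contradiction on an interior ball). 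This closes the argument.

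The main obstacle is the first step: turning the pointwise smallness $u(X_j)/d_j\to 0$ into the \emph{sup} smallness $s_j/d_j\to 0$ over a full ball touching the free boundary. A single Harnack inequality on $B_{3d_j/4}(X_j)$ only controls $u$ near $X_j$, not everywhere in $B_{d_j}(Y_j)$, and the non-homogeneous right-hand side $mf u^{m-1}$ contributes the additive $Cd_j$ error terms seen above; one must track these carefully so that the $(d_j/s_j)^p$ factor still forces the penalty energy to vanish. I would handle this with a sup-over-weighted-distance device analogous to the set $A_j$ in the proof of Theorem \ref{t 3.1}: replace $Y_j$ by an optimally chosen interior point $Z_j$ at which $\dist(Z_j,\partial B)\,u(Z_j)$ is maximal, propagate smallness by the interior Harnack chain along the segment to $Y_j$, and then rescale at scale $r_j = \dist(Z_j,\partial\{u>0\})$ around a free-boundary point $W_j$ near $Z_j$. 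Once the rescaled sequence is set up correctly, the compactness and strong maximum principle steps are routine.
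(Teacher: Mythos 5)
Your blow-up scheme has a sign error that makes the key step fail, and the underlying reason is that the $\mathcal{A}$-harmonic replacement is the wrong competitor for a \emph{lower} bound. Let us track the normalization. Set $u_j(X)=u(Y_j+d_jX)/s_j$ and $h_j$ its $\mathcal{A}$-harmonic replacement on $B_1$. By (G3) the Dirichlet part of the minimality inequality rescales by the factor $d_j^{\,n}(s_j/d_j)^p$, so dividing the comparison of Theorem \ref{th 2.2} by that factor yields
$$
\int_{B_1}\bigl[G(\cdot,\nabla u_j)-G(\cdot,\nabla h_j)\bigr]\,dX
\;\le\; K C\, s_j^{\,m}\Bigl(\tfrac{d_j}{s_j}\Bigr)^{p}
\;+\;\epsilon^{-1}\,\Leb\bigl(\{u_j=0\}\cap B_1\bigr)\,\Bigl(\tfrac{d_j}{s_j}\Bigr)^{p}.
$$
Under your contradiction hypothesis $s_j/d_j\to 0$, the factor $(d_j/s_j)^p$ diverges, and $\Leb(\{u_j=0\}\cap B_1)$ has no reason to vanish (the origin is a free boundary point for $u_j$). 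So the right-hand side does not tend to $0$ — it blows up — and you cannot conclude $u_0=h_0$. Your displayed bound ``$\le C(d_j/s_j)^p(1+\text{l.o.t.})\to 0$'' contradicts your own hypothesis: with $s_j/d_j\to 0$, $(d_j/s_j)^p\to\infty$. (Note the contrast with the proof of Theorem \ref{t 3.1}: there the contradiction hypothesis is superlinear growth, so the analogous factor $(r_j/u(Z_j))^p$ tends to $0$, and the penalty term is indeed killed. Your case points the other way.)

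The deeper issue is directional. When you pass from $u$ to the $\mathcal{A}$-harmonic replacement $h>0$, the free-boundary penalty term $\int Q\chi_{\{\cdot>0\}}$ \emph{increases} by $\int Q\chi_{\{u=0\}}\ge 0$, so in the minimality inequality $\mathfrak{F}(u)\le\mathfrak{F}(h)$ this term sits on the wrong side and can only loosen the estimate; it provides no obstruction to $u$ being tiny. To get nondegeneracy you need a competitor with a \emph{larger} zero set than $u$ near $Y_j$, so that the $Q$-term produces a \emph{gain} that minimality must balance against a Dirichlet cost. This is precisely what the paper's proof does: after rescaling $v(X)=u(X_0+dX)/d$, it builds the competitor $\xi=\min\{v,(Cv(0)+Cd^m\|f\|_\infty)\psi\}$ where $\psi$ is a cut-off vanishing on $B_{1/10}$, so that $\{\xi=0\}\supset B_{1/10}$. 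Minimality then forces $Cv(0)^p\ge \epsilon\,\Leb(B_{1/10})-d^m C$, giving $v(0)\ge c$ for $d$ small. This barrier/truncation device, together with the explicit $Q$-gain, is the missing idea; no amount of tuning of a harmonic-replacement blow-up will produce it, because the harmonic replacement only ever shrinks the zero set.
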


\begin{proof}
Given a point $X_{0} \in \{ u > 0 \} \cap \Omega'$ we denote $d:= \dist( X_{0}, \partial \{u > 0 \})$. Define
\begin{eqnarray*}
v\left( X \right) = \frac{u\left( X_{0} + dX\right)}{d}, \ \forall X \in B_{1}\left( 0 \right).  
\end{eqnarray*}
Notice that
\begin{eqnarray*}
\div \left( \mathcal{A}\left( X_0 + dX, \nabla v \right) \right) = d^{m}mf \left( X_0 + dX \right)v^{m-1} \ \ \mbox{in} \ \ \left\lbrace v > 0 \right\rbrace,   
\end{eqnarray*}
where $\mathcal{A}\left( Y, \xi \right)$ is as in Theorem \ref{th 2.2}. Let $Y_{0} \in \partial \left\lbrace u > 0 \right\rbrace$ such that $d = \vert X_{0} - Y_{0} \vert$. Since $u$ is Lipschitz continuous 
\begin{eqnarray}
\label{v bounded}
\Vert v \Vert_{L^{\infty}\left( B_{1}\left( 0\right)\right)} \leq 2.
\end{eqnarray}
By Harnack inequality,  
\begin{eqnarray*}
v\left( X \right) \leq Cv\left( 0 \right) + d^{m}C\Vert f \Vert_{L^{\infty}\left( B_{1}\left( 0 \right)\right) }, \quad \forall X \in B_{\frac{3}{4}}\left( 0 \right).    
\end{eqnarray*}
Let $\psi$ be a nonnegative, smooth cut-off function such that 
\begin{eqnarray}
\psi = \left \{
\begin{array}{lll}
0, \ &\mbox{if}& \ X \in B_{\frac{1}{10}}\left( 0 \right), \\ 
1, \ &\mbox{if}& \ B_{1}\left( 0 \right)\setminus B_{\frac{1}{2}}\left( 0\right) .\\ 
\end{array}
\right.
\end{eqnarray}
Define the test function $\xi$ in $B_{1}\left( 0 \right)$ by
\begin{eqnarray}
\xi := \left \{
\begin{array}{ccr}
\min \left\lbrace v, \left( Cv\left( 0 \right) + d^{m}C\Vert f \Vert_{L^{\infty}\left( B_{1}\left( 0 \right)\right) }\right)\psi \right\rbrace  &\mbox{in}& \ B_{\frac{3}{4}}\left(0 \right), \\
v \ &\mbox{in}& \ B^{C}_{\frac{3}{4}}\left(0 \right).\\
\end{array}
\right.     
\end{eqnarray}
Clearly, 
\begin{eqnarray}
0 \leq \xi \leq v \leq 2.
\end{eqnarray}
By minimality of $v$ in $B_{1}\left( 0 \right)$ we have
\begin{eqnarray}
\label{t 3.2.1}
\int\limits_{\Pi}  G(X_{0}+ dX, \nabla \xi ) - G(X_{0} + dX, \nabla v) dX & = & \nonumber \int\limits_{B_{1}\left( 0 \right)}  G(X_{0}+ dX, \nabla \xi ) - G(X_{0} + dX, \nabla v) dX \\ \nonumber
& \geq &  \int\limits_{B_{1}\left( 0 \right)}  g\left( X_{0}+ dX,  dv \right) - g\left( X_{0}+ dX, d\xi \right) dX  \\ 
& = & d^{m}\int\limits_{\Pi}  f\left(X_{0}+ dX \right)\left(  v^{m} - \xi^{m} \right) dX \\ \nonumber
& + & \int\limits_{B_{1}\left( 0 \right)} Q\left( X_{0}+ dX \right)\left( \chi_{\left\lbrace v > 0 \right\rbrace} - \chi_{\left\lbrace \xi > 0\right\rbrace }\right) dX, 
\end{eqnarray} 
where $\Pi:=\left\lbrace \left( Cv\left( 0 \right) + d^{m}C\Vert f \Vert_{L^{\infty}\left( B_{1}\left( 0 \right)\right) }\right)\psi < v \right\rbrace$. In addition, we estimate,
\begin{eqnarray}
\label{t 3.2.a}
d^{m}\int_{\Pi} \vert f\left(X_{0}+ dX \right)\left(  v^{m} - \xi^{m} \right)\vert dX 
& \leq & d^{m}Km2^{m-1}\int_{\Pi}  \vert  v - \xi \vert dX \\ \nonumber
& \leq & d^{m}C\int_{\Pi}  \vert  v - \xi \vert^{p} dX + d^{m}C \\ \nonumber
& \leq & d^{m}C\int_{\Pi}  \vert \nabla \left( v - \xi \right) \vert^{p} dX + d^{m}C \\ \nonumber
& \leq & d^{m}C\int_{\Pi}  \vert \nabla  v \vert^{p} dX + d^{m}C\int_{\Pi} \vert \nabla\xi \vert^{p} dX + d^{m}C
\end{eqnarray}
and 
\begin{eqnarray}
\label{t 3.2.b}
\int_{B_{1}\left( 0 \right)} Q\left( X_{0}+ dX \right)\left( \chi_{\left\lbrace v > 0 \right\rbrace} - \chi_{\left\lbrace \xi > 0\right\rbrace }\right) dX \nonumber
& = & \int_{B_{1}\left( 0 \right)} Q\left( X_{0}+ dX \right)\left( 1 - \chi_{\left\lbrace \xi > 0\right\rbrace }\right) dX \\ \nonumber
& = &\int_{B_{1}\left( 0 \right)} Q\left( X_{0}+ dX \right)\chi_{\left\lbrace \xi = 0 \right\rbrace } dX \\ \nonumber
& \geq & \epsilon \Leb \left( \left\lbrace \xi = 0 \right\rbrace \right) \\ 
& \geq & \epsilon \Leb \left( B_{\frac{1}{10}}\left( 0 \right) \right).
\end{eqnarray}
with $C> 0$ being a universal constant. From condition (G2) and (G3) we can further estimate 
\begin{eqnarray}
\label{t 3.2.2}
p G( X_{0}+ dX, \nabla v) & = & \nonumber \langle \mathcal{A}\left(  X_{0}+ dX,  \nabla v \right), \nabla v \rangle \\
& \geq & \lambda \vert \nabla v \vert^{p}
\end{eqnarray}
and
\begin{eqnarray}
\label{t 3.2.3}
p G( X_{0}+ dX, \nabla \xi) & = & \nonumber \langle \mathcal{A}\left(  X_{0}+ dX,  \nabla \xi \right), \nabla \xi \rangle \\ 
& \leq & \lambda^{-1} \vert \nabla \xi \vert^{p}.
\end{eqnarray}
We, therefore, have, 
\begin{eqnarray}
\label{t 3.2.4}
\int\limits_{\Pi}  G(X_{0}+ dX, \nabla \xi ) - G(X_{0} + dX, \nabla v) dX & \leq & \ \nonumber \frac{\lambda^{-1}}{p} 
\int\limits_{\Pi}\vert \left( Cv\left( 0 \right) + d^{m}C\Vert f \Vert_{L^{\infty}\left( B_{1}\left( 0 \right)\right) }\right) \nabla \psi \vert^{p} dX \\ \nonumber
&-& \frac{\lambda}{p} \int\limits_{\Pi}\vert \nabla v \vert^{p} dX \\ \nonumber
& \leq & C\int\limits_{B_{1}\left( 0 \right)} \left( v^{p}\left( 0 \right) + d^{mp}\Vert f \Vert^{p}_{L^{\infty}\left( B_{1}\left( 0 \right)\right) }\right) \vert \nabla \psi \vert^{p} dX  \\ \nonumber
&-& \frac{\lambda}{p}  \int\limits_{\Pi}\vert \nabla v \vert^{p} dX \\ 
&\leq & Cv^{p}\left( 0 \right) + d^{mp}C - \frac{\lambda}{p}  \int_{\Pi}\vert \nabla v \vert^{p} dX,
\end{eqnarray}
where, as before, $C>0$ is a universal constant. From inequalities (\ref{t 3.2.a}), (\ref{t 3.2.b}), (\ref{t 3.2.4}) together with (\ref{t 3.2.1}) and taking $d$ universally small, we obtain
\begin{eqnarray}
Cv^{p}\left( 0 \right) & \geq & \nonumber   
\left( \frac{\lambda}{p} - d^{m}C \right) \int_{\Pi} \vert \nabla v \vert^{p} dX + c \Leb \left( B_{\frac{1}{10}}\left( 0 \right) \right) \\ \nonumber
& \geq & c \Leb \left( B_{\frac{1}{10}}\left( 0 \right) \right).
\end{eqnarray}
with $c>0$ and $C>0$ universal constants. In conclusion,
\begin{eqnarray*}
v\left( 0 \right) \geq c,
\end{eqnarray*} 
and Theorem \ref{t 3.2} is proven.
\end{proof}

By a refinement of the arguments in the proof of Theorem \ref{t 3.2}, we also obtain geometric strong nondegeneracy property of $u$. More precisely, we show
\begin{theorem} 
\label{t 3.3}
Given a subdomain $\Omega' \Subset \Omega$, there exist constants $c_1>0$ and $r_{1}>0$ that depend only on $\Omega'$ and universal constants, such that if $X_0 \in \partial \{ u > 0 \} \cap \Omega'$, $0 < r \leq r_{1}$, then
$$
	\sup\limits_{X\in B_r(X_0)} u(X) \ge c_1 r. 
$$ 
\end{theorem}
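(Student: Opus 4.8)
The plan is to argue by contradiction, via rescaling and an energy competitor that \emph{shrinks} the positivity set of $u$, so that the discontinuous term $\int Q\chi_{\{u>0\}}$ pays a strictly negative toll. First I would reduce to interior points: if $\sup_{B_r(Z)}u\ge c_1 r$ is known for every $Z\in\{u>0\}\cap\Omega'$ (and every admissible $r$), then for $X_0\in\partial\{u>0\}\cap\Omega'$ one takes $Z_k\in\{u>0\}$ with $Z_k\to X_0$ and passes to the limit, using the uniform continuity of $u$ from Theorem \ref{th 2.2}. So fix $X_0\in\{u>0\}\cap\Omega'$, put $d:=\dist(X_0,\partial\{u>0\})$, and assume for contradiction that $\sup_{B_r(X_0)}u<c_1 r$, with $c_1$ a universal constant to be fixed small. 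If $d\ge r/2$, the optimal lower growth of Theorem \ref{t 3.2} already gives $\sup_{B_r(X_0)}u\ge u(X_0)\ge c\min(d,d_0)\ge c\,r/4>c_1 r$ once $c_1<c/4$ and $r_1\le 2d_0$ — a contradiction. Hence $d<r/2$, which is the genuine case.

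For it I would use the competitor $v:=u$ outside $B_r(X_0)$ and $v:=\min\{u,\Phi\}$ inside, where $\Phi(X):=\tfrac{2M}{r}\big(|X-X_0|-\tfrac r2\big)^{+}$ and $M:=\sup_{B_r(X_0)}u$. Then $v\in W^{1,p}_\phi(\Omega)$, $0\le v\le u$, $v\equiv u$ near $\partial B_r(X_0)$ and $v\equiv 0$ on $B_{r/2}(X_0)$, and — the point of the radial cap — $\nabla v$ differs from $\nabla u$ only on $\{u>\Phi\}\subset\{u>0\}$, where $\nabla v=\nabla\Phi$ vanishes on $B_{r/2}(X_0)$ and is bounded by $2M/r$ elsewhere. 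Feeding $v$ into the minimality of $u$, using (G2)--(G3), the identity $\langle\mathcal{A}(X,\xi),\xi\rangle=pG(X,\xi)$, the elementary bound $u^m-\Phi^m\le mM^{m-1}(u-\Phi)^{+}$, and H\"older's inequality as in Theorem \ref{th 2.2}, I expect to arrive at an estimate of the form
$$
\lambda\int_{B_{r/2}(X_0)}|\nabla u|^p\,dX+\epsilon\,\Leb\big(\{u>0\}\cap B_{r/2}(X_0)\big)\ \le\ C\Big(M^m+\big(\tfrac{M}{r}\big)^{p}\Big)\,\Leb\big(\{u>0\}\cap B_r(X_0)\big),
$$
the gain $\epsilon\,\Leb(\{u>0\}\cap B_{r/2}(X_0))$ coming from $Q\chi_{\{v>0\}}-Q\chi_{\{u>0\}}\le-\epsilon\chi_{\{u>0\}\cap B_{r/2}}$, and the right-hand side being the $p$-Dirichlet toll of the cap together with the toll of the $f(u^{+})^m$ term. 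Since $M<c_1 r\le c_1 r_1$, both factors $M^m$ and $(M/r)^p$ are as small as we wish, so this bounds $\Leb(\{u>0\}\cap B_{r/2}(X_0))$ by an arbitrarily small fraction of $\Leb(\{u>0\}\cap B_r(X_0))$; moreover Theorem \ref{t 3.2} forces $\{u>0\}\cap B_r(X_0)$ to lie in an $O(c_1 r)$-neighbourhood of the free boundary. Iterating the displayed inequality over dyadic radii and combining with the fact that $X_0\in\overline{\{u>0\}}$ — which, again through the linear lower growth of Theorem \ref{t 3.2}, makes $\{u>0\}$ contain near $X_0$ balls whose radii are comparable to their distance to $X_0$, i.e.\ a lower density bound — should close the argument, with $c_1$ comparable to $\epsilon^{1/p}$ and $r_1$ universal.

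The step I expect to be the main obstacle is the last one: arranging the cap (and the iteration across scales) so that the $\epsilon$-gain of the discontinuous term is \emph{quantitatively} larger than the combined cost of the degenerate $p$-Dirichlet cap and of the non-homogeneous term $g(X,u)$, \emph{uniformly} in the scale and independently of how sharply $\{u>0\}$ might a priori pinch at $X_0$. This uniform bookkeeping is exactly where the degeneracy $p>2$ and the lower-order term demand the "refinement of the arguments in the proof of Theorem \ref{t 3.2}" alluded to in the paper, and it is the heart of the proof; the remaining computations (lower semicontinuity, H\"older–Poincar\'e on the annulus, the monotonicity inequality $\int G(X,\nabla u)-G(X,\nabla v)\ge c\int|\nabla(u-v)|^p$) are routine and parallel those already carried out in Theorems \ref{th 2.2} and \ref{t 3.2}.
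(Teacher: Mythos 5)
Your competitor is the same device the paper uses: zero out the rescaled minimizer on the inner half-ball and interpolate to its boundary values with a radial cap, so that the discontinuous term $\int Q\chi_{\{\cdot>0\}}$ pays a toll on $B_{r/2}$ while the Dirichlet and $g$-terms cost only $O\big((\sup_{B_r}u)^{p}\big)+O(r^{m})$. The difference — affine cap versus $\mathcal{A}$-harmonic barrier $h_r$, contradiction versus direct inequality — is cosmetic. The step you single out as the "main obstacle" is, however, a genuine gap, and the fix you propose does not close it. The $Q$-gain is in fact $\epsilon\,\Leb(\{u>0\}\cap B_{r/2}(X_0))$, not $\epsilon\,\Leb(B_{r/2}(X_0))$: at a free boundary point you have no a priori lower bound on $\Leb(\{u>0\}\cap B_{r/2}(X_0))$, so the inequality you display does not by itself force $\sup_{B_r}u\gtrsim r$. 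You then invoke "a lower density bound" coming "through the linear lower growth of Theorem \ref{t 3.2}" — but Theorem \ref{t 3.2} says $u(X)\ge c\,\dist(X,\partial\{u>0\})$ for interior points; it controls the \emph{value} of $u$, not the \emph{measure} of $\{u>0\}$, and gives no lower bound on $\Leb(\{u>0\}\cap B_s(X_0))/s^n$. The genuine density estimate is Theorem \ref{t 4.1}, whose proof uses Theorem \ref{t 3.3}: invoking it here would be circular. Your opening reduction to interior $Z\in\{u>0\}$ does not help either, because when $d=\dist(Z,\partial\{u>0\})\ll r$ you are back to exactly the same difficulty, now with $\Leb(\{u>0\}\cap B_{r/2}(Z))\ge\omega_n d^n$, which is useless when $d/r$ is small.

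For what it is worth, the paper's own display (\ref{t 3.3.3}) elides this point: the chain $\int_{B_1}Q(\chi_{\{v>0\}}-\chi_{\{\xi>0\}})=\int_{B_1}Q(1-\chi_{\{\xi>0\}})\ge\epsilon\Leb(B_{1/2})$ tacitly replaces $\chi_{\{v>0\}}$ by $1$, which is legitimate in Theorem \ref{t 3.2} (there $B_1\subset\{v>0\}$ after the rescaling by $d=\dist(X_0,\partial\{u>0\})$) but not in Theorem \ref{t 3.3}, where $v(0)=0$. The standard repair, which your write-up does not contain, is to upgrade the competitor estimate to a dichotomy: show that if $\sup_{B_r(Z)}u/r$ is below a universal threshold then $u\equiv 0$ in $B_{r/4}(Z)$, and then conclude for $X_0\in\partial\{u>0\}$ because no neighbourhood of a free boundary point can be a nodal ball. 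Proving the dichotomy needs one additional analytic input beyond the energy estimate: the subsolution property $\Lambda\ge 0$ of Lemma \ref{l 2} combined with a weak Harnack or $L^\infty$--$L^q$ bound, run across dyadic scales, so that the smallness of $\Leb(\{u>0\}\cap B_{r/2})$ and of $\int_{B_{r/2}}|\nabla u|^p$ provided by the competitor is converted into a strict, self-improving contraction of the normalized supremum. Without that input the $\epsilon$-gain cannot be made to dominate, and the argument — yours and, as written, the paper's — does not close.
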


\begin{proof}
The proof follows the reasoning from Theorem \ref{t 3.2}. We include the details as a courtesy to the readers. Given a point $X_{0} \in \partial \{ u > 0 \} \cap \Omega'$, define 
\begin{eqnarray*}
v\left( X \right) = \frac{u\left( X_{0} + rX\right)}{r},  \quad  \forall X \in B_{1}\left( 0 \right).  
\end{eqnarray*} 
Let $h_{r}$ be the universal barrier given by
\begin{eqnarray}
\left \{
\begin{array}{rcl}
\div \left( \mathcal{A}\left( X_{0} + rX, \nabla h_{r} \right) \right)& = & 0  \ \mbox{in} \ B_{1}\left( 0 \right)\setminus  B_{\frac{1}{2}}\left( 0 \right)   \\
h_{r} & = & 1 \ \mbox{on} \ \partial B_{1}\left( 0 \right), \\ 
h_{r} & = & 0 \ \mbox{in} \ \overline{B_{\frac{1}{2}}\left( 0 \right)}.\\ 
\end{array}
\right.
\end{eqnarray}
Define the test function $\xi$ in $B_{1}\left( 0 \right)$ by
\begin{eqnarray}
\xi \left( X \right)  = \min \left\lbrace v\left( X \right), h_{r}\left( X \right) \sup\limits_{B_{1}(0)}v \right\rbrace.
\end{eqnarray}
As in Theorem \ref{t 3.2} we have 
\begin{eqnarray}
0 \leq \xi \leq v \leq 2.
\end{eqnarray}
By minimality of $v$ in $B_{1}\left( 0 \right)$ we can estimate
\begin{eqnarray}
\label{t 3.3.1}
\int_{\Pi}  G(X_{0} + rX, \nabla \xi ) - G(X_{0} + rX, \nabla v) dX & = & \nonumber \int_{B_{1}\left( 0 \right)}  G(X_{0} + rX, \nabla \xi ) - G(X_{0} + rX, \nabla v) dX \\ \nonumber
& \geq &  \int_{B_{1}\left( 0 \right)}  g\left( X_{0} + rX,  rv \right) - g\left( X_{0} + rX, r\xi \right) dX  \\ 
& = & r^{m}\int_{\Pi}  f\left(X_{0} + rX \right)\left(  v^{m} - \xi^{m} \right) dX \\ \nonumber
& + & \int_{B_{1}\left( 0 \right)} Q\left( X_{0} + rX \right)\left( \chi_{\left\lbrace v > 0 \right\rbrace} - \chi_{\left\lbrace \xi > 0\right\rbrace }\right) dX, 
\end{eqnarray}
where $\Pi:=\left\lbrace h_{r}\sup\limits_{B_1(X_0)}u < v \right\rbrace$. Also we have
\begin{eqnarray}
\label{t 3.3.2}
r^{m}\int_{\Pi} \vert f\left(X_{0}+ rX \right)\left(  v^{m} - \xi^{m} \right)\vert dX 
& \leq & r^{m}Km2^{m-1}\int_{\Pi}  \vert  v - \xi \vert dX \\ \nonumber
& \leq & r^{m}C\int_{\Pi}  \vert  v - \xi \vert^{p} dX + r^{m}C \\ \nonumber
& \leq & r^{m}C\int_{\Pi}  \vert \nabla \left( v - \xi \right) \vert^{p} dX + r^{m}C \\ \nonumber
& \leq & r^{m}C\int_{\Pi}  \vert \nabla  v \vert^{p} dX + r^{m}C\int_{\Pi} \vert \nabla\xi \vert^{p} dX + r^{m}C.
\end{eqnarray}
Similarly we estimate
\begin{eqnarray}
\label{t 3.3.3}
\int_{B_{1}\left( 0 \right)} Q\left( X_{0}+ rX \right)\left( \chi_{\left\lbrace v > 0 \right\rbrace} - \chi_{\left\lbrace \xi > 0\right\rbrace }\right) dX \nonumber
& = & \int_{B_{1}\left( 0 \right)} Q\left( X_{0}+ rX \right)\left( 1 - \chi_{\left\lbrace \xi > 0\right\rbrace }\right) dX \\ \nonumber
& = &\int_{B_{1}\left( 0 \right)} Q\left( X_{0}+ rX \right)\chi_{\left\lbrace \xi = 0 \right\rbrace } dX \\ \nonumber
& \geq & \epsilon \Leb \left( \left\lbrace \xi = 0 \right\rbrace \right) \\ 
& \geq & \epsilon \Leb \left( B_{\frac{1}{2}}\left( 0 \right) \right),
\end{eqnarray}
where $C> 0$ is a universal constant.  Taking into account (G2) and (G3), we obtain
\begin{eqnarray}
\label{t 3.3.6}
\int_{\Pi}  G(X_{0} + rX, \nabla \xi ) - G(X_{0} + rX, \nabla v) dX & \leq & \ \nonumber \frac{\lambda^{-1}}{p} \int_{\Pi} \vert \sup\limits_{B_1(0)}v  \nabla h_{r} \vert^{p} dX - \frac{\lambda}{p}  \int_{\Pi}\vert \nabla v \vert^{p} dX \\ \nonumber
& \leq & C\left( \sup\limits_{B_1(0)}v \right)^{p} \int_{B_{1}\left( 0 \right)} \vert \nabla h_{r}\vert^{p} dX  \\ 
&-& \frac{\lambda}{p}  \int_{\Pi}\vert \nabla v \vert^{p} dX,
\end{eqnarray}
for $C>0$  universal. Also by $C^{1,\alpha}$ estimates for $h_{r}$, we have
\begin{eqnarray*}
\label{t 3.3.7}
C\left( \sup\limits_{B_1(0)}v \right)^{p} \int_{B_{1}\left( 0 \right)} \vert \nabla h_{r}\vert^{p} dX \leq C\left( \sup\limits_{B_1(0)}v \right)^{p}
\end{eqnarray*}
for a universal constant $C>0$. Plugging inequalities (\ref{t 3.3.2}), (\ref{t 3.3.3}) and (\ref{t 3.3.6}) inside (\ref{t 3.3.1}) and taking $r$ sufficiently small, we obtain
\begin{eqnarray}
C \left( \sup\limits_{B_1(0)}v \right)^{p} & \geq & \nonumber   
\left( \frac{\lambda}{p} - r^{m}C \right) \int_{\Pi} \vert \nabla v \vert^{p} dX + c\Leb \left( B_{\frac{1}{2}}\left( 0 \right) \right) \\ \nonumber
& \geq & c \Leb \left( B_{\frac{1}{2}}\left( 0 \right) \right).
\end{eqnarray}
with $c>$ and $C>0$ universal constants. Therefore, $\sup\limits_{B_1(0)}v \geq c$,  and strong nondegeneracy is proven. 
\end{proof}

\section{Hausdorff estimates of the free boundary} \label{S Hausdorff Est}

In this section we turn out attention to fine Hausdorff estimates on the free surface $\partial \{u > 0 \}$. 

\begin{theorem}
\label{t 4.1}
Given a subdomain $\Omega' \Subset \Omega$ and $Z\in \partial \{u > 0 \} \cap \Omega'$, there exist constants $r_{0} > 0$ and $0<\varsigma<1$ that depend  only on $\Omega'$ and universal constants, such that,
\begin{equation}\label{uniform positive density}
   \varsigma \omega_n r^n  \le \Leb \left ( B_r(Z) \cap
    \{ u > 0 \} \right ) \le (1 - \varsigma) \omega_n r^n, 
\end{equation}
for all $0 \leq r \leq r_{0}$.
\end{theorem}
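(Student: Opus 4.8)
The plan is to establish the two density bounds in \eqref{uniform positive density} separately, each following from one of the two geometric estimates proved in Section \ref{S Lip and ND}. For the lower bound, $\varsigma \omega_n r^n \le \Leb(B_r(Z) \cap \{u>0\})$, I would start from strong nondegeneracy (Theorem \ref{t 3.3}): since $Z \in \partial\{u>0\} \cap \Omega'$, for $r \le r_1$ there is a point $Y \in B_{r/2}(Z)$ with $u(Y) \ge \sup_{B_{r/2}(Z)} u \cdot$ (a universal factor), hence $u(Y) \ge c_1 (r/2)$. On the other hand, by Lipschitz continuity (Theorem \ref{t 3.1}), $u(X) \le u(Y) + C|X - Y|$, so $u$ stays strictly positive on a ball $B_\rho(Y)$ with $\rho = \tfrac{c_1 r}{2C}$ a universal fraction of $r$. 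Choosing $\rho$ small enough that $B_\rho(Y) \subset B_r(Z)$ (which holds since $Y \in B_{r/2}(Z)$ and $\rho \le r/2$ after possibly shrinking the constant), we get $B_\rho(Y) \subset B_r(Z) \cap \{u > 0\}$, giving the lower density bound with $\varsigma \omega_n r^n \le \omega_n \rho^n$, i.e. $\varsigma = (c_1/2C)^n$ up to adjustments, valid for $r \le r_0 := \min\{r_1, \operatorname{dist}(\Omega',\partial\Omega), \dots\}$.

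For the upper bound, $\Leb(B_r(Z) \cap \{u>0\}) \le (1-\varsigma)\omega_n r^n$, equivalently $\Leb(B_r(Z) \cap \{u = 0\}) \ge \varsigma \omega_n r^n$, I would use the comparison argument at the heart of the nondegeneracy proofs. Rescale by setting $v(X) = u(Z + rX)/r$ on $B_1(0)$; then $v$ is a minimizer of the rescaled functional, $v(0) = 0$, and $\|v\|_{L^\infty(B_1)} \le 2$ by Lipschitz bounds. Compare $v$ with the $\mathcal{A}$-harmonic replacement $h$ in $B_{1/2}(0)$ taking boundary data $v$ on $\partial B_{1/2}(0)$; since $v(0)=0$ and $h \ge 0$, if $v$ were positive on a very large portion of $B_{1/2}(0)$ the energy gain from passing to $h$ (which is strictly positive and bounded below away from $0$ on a smaller ball, by Harnack applied to $h$) would beat the cost $Q\Leb(\{h>0\}\setminus\{v>0\}) \le \epsilon^{-1}\Leb(B_{1/2})$ plus the lower-order $f u^m$ term, contradicting minimality of $v$. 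Quantitatively: by \eqref{th 2.2.4}, $\int_{B_{1/2}} G(X,\nabla v) - G(X,\nabla h) \ge c \int |\nabla(v-h)|^p$, while minimality forces this to be $\le mKM^{m-1}\int|v-h| + \epsilon^{-1}\Leb(\{v=0\}\cap B_{1/2})$; combining with Poincaré (valid since $v - h \in W_0^{1,p}(B_{1/2})$) and the fact that $h$ — being $\mathcal{A}$-harmonic, nonnegative, vanishing nowhere in the interior unless identically zero, yet equal to $v \not\equiv 0$ on the boundary — satisfies $\int_{B_{1/2}} h^p \ge c_0 > 0$ universally, one forces $\Leb(\{v=0\} \cap B_{1/2}(0)) \ge \varsigma$ for a universal $\varsigma > 0$. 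Rescaling back gives $\Leb(B_{r/2}(Z) \cap \{u=0\}) \ge \varsigma (r/2)^n$, hence $\Leb(B_r(Z)\cap\{u>0\}) \le (1 - \varsigma 2^{-n})\omega_n r^n$, which is the desired bound after renaming $\varsigma$.

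The main obstacle is the upper bound, specifically making rigorous the claim that the $\mathcal{A}$-harmonic replacement $h$ has $L^p$-norm (or measure of its positivity set) bounded \emph{below} by a universal constant whenever its boundary data $v$ does not vanish identically. This is where nondegeneracy of $u$ at $Z$ is essential: by Theorem \ref{t 3.3} (or rather the estimate $\sup_{B_{1/2}(0)} v \ge c$ obtained in its proof after rescaling), the boundary data of $h$ has supremum bounded below universally, and then Harnack's inequality for $\mathcal{A}$-harmonic functions (together with the uniform ellipticity (G2)) propagates this to an interior lower bound for $h$, hence the lower bound on $\int h^p$. One must be careful that the lower-order terms $d^m f u^{m-1}$ in the Euler–Lagrange equation and $f(u^+)^m$ in the energy are genuinely lower-order after rescaling — they carry a factor $r^m$ (resp. $r^{mp/(p-m)}$ type powers as in the $l_j$ estimate of Theorem \ref{t 3.1}) — so choosing $r \le r_0$ universally small absorbs them, exactly as in \eqref{t 3.2.4} and the displayed inequality following it in the proof of Theorem \ref{t 3.2}. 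Once these pieces are assembled, both inequalities in \eqref{uniform positive density} hold with a single universal $\varsigma \in (0,1)$ and $r_0 > 0$.
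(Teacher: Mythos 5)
Your treatment of the lower bound is correct and coincides with the paper's: pick a point $X_0$ of maximum of $u$ on a fixed sub-ball of $B_r(Z)$, use strong nondegeneracy (Theorem~\ref{t 3.3}) to get $u(X_0)\ge cr$, then Lipschitz continuity (Theorem~\ref{t 3.1}) to produce a ball $B_{c'r}(X_0)\subset B_r(Z)\cap\{u>0\}$. (Minor slip: the inequality should read $u(X)\ge u(X_0)-C|X-X_0|$, not $\le$, but the intent is clear.)

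For the upper bound, your approach is genuinely different from the paper's and has a gap. The paper argues by contradiction and compactness: assume $r_j^{-n}\Leb(B_{r_j}(Z)\cap\{u=0\})\to 0$, rescale $u_j=u(Z+r_jX)/r_j$, take $h_j$ the $\mathcal{A}$-harmonic replacement in $B_1(0)$ with data $u_j$ on $\partial B_1$, derive $\|\nabla(u_j-h_j)\|_{L^p}\to 0$ from minimality, pass to the limit to get $u_0=h_0+\mathrm{const}$ so that $u_0$ is nonnegative $\mathcal{A}(Z,\cdot)$-harmonic with $u_0(0)=0$, hence $u_0\equiv 0$ by the strong maximum principle, contradicting the nondegeneracy of $u_0$ inherited from Theorem~\ref{t 3.3}. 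Your proposal instead tries to close the estimate at a single scale: rescale to $v$, compare with the harmonic replacement $h$, and extract $\Leb(\{v=0\}\cap B_{1/2})\ge\varsigma$ directly. The crux, which you correctly identify, is showing $h$ has a universal interior lower bound, so that $v(0)=0$ forces $\|\nabla(v-h)\|_{L^p}$ to be universally large. Here is the gap: you invoke ``$\sup_{B_{1/2}(0)}v\ge c$'' from nondegeneracy and then claim ``the boundary data of $h$ has supremum bounded below universally.'' But nondegeneracy controls the supremum of $v$ over the \emph{open ball} $B_{1/2}(0)$, not over the sphere $\partial B_{1/2}(0)$ where $h$'s boundary data live; the interior maximum could lie well inside $B_{1/2}$ with $v$ vanishing identically near $\partial B_{1/2}$, in which case $h\equiv 0$. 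Moreover, even granting $\sup_{\partial B_{1/2}}v\ge c$, interior Harnack for $\mathcal{A}$-harmonic functions starts from an interior point where $h$ is known to be large and cannot by itself transport a single boundary value inward; you would need a barrier or a boundary Harnack-type estimate, which you do not supply.

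The statement you want \emph{can} be reached by a direct argument, but it is in effect the paper's compactness proof unrolled quantitatively: if $\Leb(\{v=0\}\cap B_{1/2})$ and $r$ are both small, minimality plus Poincaré give $\|v-h\|_{L^p}$ small, which together with $\|v\|_{L^p(B_{1/2})}$ bounded below (from nondegeneracy and Lipschitz) forces $\|h\|_{L^p}$ bounded below on an interior ball; then Harnack gives $h(0)\ge c_0>0$, contradicting $v(0)=0$ and $\|v-h\|_{L^p}$ small, since $v-h$ is uniformly Hölder. You should either fill in this chain explicitly (being careful that the Harnack chain from where $\|h\|_{L^p}$ is large to the origin stays in a region where $h$ is $\mathcal{A}$-harmonic, e.g.\ by doing the replacement on a slightly larger ball), or simply switch to the blow-up/contradiction formulation as in the paper, where the strong maximum principle in the limit makes the bookkeeping much lighter.
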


\begin{proof}
Let $X_{0} \in \bar{B}_{\frac{r}{4}}\left( Z \right)$ be a maximum point of $u$, i.e.,  
\begin{eqnarray*}
u(X_{0}) = \sup\limits_{X\in B_{\frac{r}{4}}(Z)} u(X). 
\end{eqnarray*}
By strong nondegeneracy property ($0< r \leq r_{0}$ with $r_{0}=r_{1}$ and $r_{1}>0$ as in Theorem \ref{t 3.3}) we obtain
\begin{eqnarray*}
u(X_{0}) \geq cr > 0, 
\end{eqnarray*}
where $c>0$ is a universal constant. By Lipschitz continuity of $u$ there exists a universal constant $C>0$ such that
\begin{eqnarray*}
u(X)\geq u(X_{0}) - C\vert X - X_{0} \vert \geq \frac{cr}{2} > 0, \ \ \forall X \in B_{\frac{c}{2C}r}\left( X_{0} \right),  
\end{eqnarray*}
with $C \geq c$.
Hence,
\begin{eqnarray*}
B_{\frac{c}{2C}r}\left( X_{0} \right) \subset B_r(Z) \cap \{ u > 0 \},  
\end{eqnarray*}
and the estimate by below in (\ref{uniform positive density}) follows. 
\par
Let us now prove estimate by above. We argue by contradiction, i.e., let us assume that there exists a sequence of positive real numbers $r_{j}$ with $r_{j}\searrow 0$ as $j \rightarrow \infty$ and 
\begin{eqnarray}
\label{t 4.1.1} 
\frac{\Leb \left ( B_{r_{j}}(Z) \cap \{ u = 0 \} \right )}{r^{n}_{j}}\rightarrow 0. 
\end{eqnarray}
We define the sequence $u_{j}: B_{1}\left( 0 \right)\rightarrow \mathbb{R}$ by
\begin{eqnarray}
\label{t 4.1.b}
u_{j}\left( X \right):= \frac{u\left( Z + r_{j}X\right)}{r_{j}}. 
\end{eqnarray}
Let $h_{j}$ be the solution to 
\begin{eqnarray}
\label{t 4.1.a}
\left \{
\begin{array}{rcll}
\div \left( \mathcal{A}\left( Z + r_{j}X, \nabla h_{j} \right) \right)& = & 0  &  \mbox{in} \ B_{1}\left( 0 \right) \\
h_{j} & = & u_{j} & \mbox{on} \ \partial B_{1}\left( 0 \right). \\ 
\end{array}
\right.
\end{eqnarray}
Notice that by Lipschitz continuity of $u$,  both $u_{j}$ and  $h_{j}$ are bounded. Estimates (\ref{th 2.2.2}) and (\ref{th 2.2.4}) from the proof of Theorem \ref{th 2.2} gives after renormalization,  
\begin{eqnarray}
\label{t 4.1.c}
\int_{B_{1}\left( 0 \right)}\vert \nabla \left( h_{j} - u_{j}\right)\vert^{p} dX & \leq & \nonumber r^{m}_{j}\int_{B_{1}\left( 0 \right)}\vert h^{m}_{j} - u^{m}_{j} \vert dX + C \dfrac{\Leb \left ( B_{r_{j}}(Z) \cap \{ u = 0 \} \right )}{r^{n}_{j}} \\ \nonumber
& \leq & r^{m}_{j}C\int_{B_{1}\left( 0 \right)}\vert h_{j} - u_{j} \vert dX + C \dfrac{\Leb \left ( B_{r_{j}}(Z) \cap \{ u = 0 \} \right )}{r^{n}_{j}}\\ \nonumber
& \leq & r^{m}_{j}C\int_{B_{1}\left( 0 \right)}\vert h_{j} - u_{j} \vert^{p} dX + r^{m}_{j}C + C \dfrac{\Leb \left ( B_{r_{j}}(Z) \cap \{ u = 0 \} \right )}{r^{n}_{j}} \\ \nonumber
&\leq & r^{m}_{j}C\int_{B_{1}\left( 0 \right)}\vert \nabla \left( h_{j} - u_{j}\right) \vert^{p} dX + r^{m}_{j}C + C\dfrac{\Leb \left ( B_{r_{j}}(Z) \cap \{ u = 0 \} \right )}{r^{n}_{j}},  
\end{eqnarray} 
where $C>0$ is a universal constant. Hence, for $j$ sufficietly large we have
\begin{eqnarray}
\label{t 4.1.2}
\int_{B_{1}\left( 0 \right)}\vert \nabla \left( h_{j} - u_{j}\right)\left( X \right) \vert^{p} dX & \leq &  r^{m}_{j}C + C \dfrac{\Leb \left ( B_{r_{j}}(Z) \cap \{ u = 0 \} \right )}{r^{n}_{j}}. 
\end{eqnarray} 
Moreover, by Lipschitz regularity $u$ and $C^{1, \alpha}$ elliptic estimate we may assume that 
\begin{eqnarray*}
u_{j} \rightarrow u_{0}\ \ \mbox{and} \ \ h_{j} \rightarrow h_{0}
\end{eqnarray*}
uniformly in $B_{\frac{4}{5}}\left( 0 \right)$. Since $h_{j}$ is the solution to problem (\ref{t 4.1.a}) we obtain 
\begin{eqnarray*}
\div \left( \mathcal{A}\left( Z , \nabla h_{0}\left( Y\right)  \right) \right) = 0, \ \ \mbox{in} \ B_{\frac{1}{2}}\left( 0 \right).  
\end{eqnarray*}
Also follows from (\ref{t 4.1.1}) and (\ref{t 4.1.2}) that
\begin{eqnarray*}
u_{0} = h_{0} + c, \ \ \mbox{in} \ B_{\frac{1}{2}}\left( 0 \right).  
\end{eqnarray*}
where $c$ is a constant. Then,
\begin{eqnarray*}
\div \left( \mathcal{A}\left( Z , \nabla u_{0}\left( Y\right)  \right) \right) = 0, \ \ \mbox{in} \ B_{\frac{1}{2}}\left( 0 \right).  
\end{eqnarray*}
Since $u_{0} \geq 0$ and $u_{0}\left( 0 \right)=0$, by the strong maximum principle, we obtain $u_{0} \equiv 0$ in $B_{\frac{1}{2}}\left( 0 \right)$. But this contradicts strong nondegeneracy property, granted by Theorem \ref{t 3.3}.  
\end{proof}

Theorem \ref{t 2.3} reveals the Euler-Lagrange equation $u$ satisfies within its set of positivity. To further investigate the behavior of $u$ along the free boundary, we need to obtain the equation $u$ satisfies through the free surface of discontinuity of the functional, $\partial \{u > 0 \}$.

\begin{lemma}
\label{l 2}
Let $u$ be a minimizer to the functional \eqref{Funct Intro}, then
\begin{eqnarray*}
	\div  \left ( \mathcal{A}\left( X, \nabla u \right) \right) - mf(X)u^{m-1} \chi_{\{u > 0\}} \geq 0, \ \ \mbox{in} \ \  \Omega
\end{eqnarray*}	
in the sense of distribution. In particular it defines a Radon measure
\begin{eqnarray*}
\Lambda:= \div  \left ( \mathcal{A}\left( X, \nabla u \right) \right) - mf(X)u^{m-1} \chi_{\{u > 0\}}
\end{eqnarray*}	
Furthermore, the support of $\Lambda$ is contained $\partial \left\lbrace u > 0 \right\rbrace$.
\end{lemma}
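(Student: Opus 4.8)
The plan is to test the minimality of $u$ against perturbations of the form $u - \varepsilon \zeta$ with $\zeta \in C_0^\infty(\Omega)$, $\zeta \ge 0$, and to exploit the sign of the discontinuous term $Q\chi_{\{u>0\}}$. The key observation is that, since $u \ge 0$ and $\zeta \ge 0$, decreasing $u$ toward zero can only shrink the positivity set: $\chi_{\{u - \varepsilon\zeta > 0\}} \le \chi_{\{u > 0\}}$, so the $Q$-contribution to $\mathfrak{F}(u - \varepsilon\zeta) - \mathfrak{F}(u)$ is nonpositive. Thus minimality forces the remaining (smooth) part of the difference quotient to have the right sign in the limit $\varepsilon \to 0^+$.

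First I would fix $\zeta \in C_0^\infty(\Omega)$ with $\zeta \ge 0$ and, for $\varepsilon > 0$ small, write
\begin{eqnarray*}
0 &\le& \mathfrak{F}(u - \varepsilon\zeta) - \mathfrak{F}(u) \\
&=& \int_\Omega \big( G(X, \nabla u - \varepsilon\nabla\zeta) - G(X,\nabla u) \big) dX + \int_\Omega f(X)\big( ((u-\varepsilon\zeta)^+)^m - (u^+)^m \big) dX \\
&& {}+ \int_\Omega Q(X)\big( \chi_{\{u-\varepsilon\zeta>0\}} - \chi_{\{u>0\}} \big) dX.
\end{eqnarray*}
The last integral is $\le 0$ by the inclusion $\{u - \varepsilon\zeta > 0\} \subset \{u > 0\}$. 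Hence
\begin{eqnarray*}
0 &\le& \int_\Omega \big( G(X,\nabla u - \varepsilon\nabla\zeta) - G(X,\nabla u) \big) dX + \int_\Omega f(X)\big( ((u-\varepsilon\zeta)^+)^m - (u^+)^m \big) dX.
\end{eqnarray*}
Dividing by $\varepsilon$ and letting $\varepsilon \to 0^+$, using differentiability and $p$-homogeneity of $\xi \mapsto G(X,\xi)$ (condition (G3)) together with the dominated convergence theorem — here the Lipschitz bound from Theorem \ref{t 3.1} and boundedness from Theorem \ref{th 2.2} control the integrands, and one checks the a.e. derivative of $\varepsilon \mapsto ((u-\varepsilon\zeta)^+)^m$ is $-m(u^+)^{m-1}\zeta$ (using $m \ge 1$, with the $\{u=0\}$ set contributing nothing) — I obtain
\begin{eqnarray*}
0 &\le& -\int_\Omega \mathcal{A}(X,\nabla u)\cdot\nabla\zeta \, dX - \int_\Omega m f(X) u^{m-1}\chi_{\{u>0\}}\zeta \, dX,
\end{eqnarray*}
which is precisely the distributional inequality $\div(\mathcal{A}(X,\nabla u)) - mf(X)u^{m-1}\chi_{\{u>0\}} \ge 0$. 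A nonnegative distribution is represented by a Radon measure $\Lambda$ by the Riesz representation theorem.

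Finally, to locate the support of $\Lambda$: on the open set $\{u>0\}$, Theorem \ref{t 2.3} gives $\div(\mathcal{A}(X,\nabla u)) = mf(X)u^{m-1}$ distributionally, so $\Lambda$ vanishes on $\{u>0\}$. On the interior of $\{u=0\}$, $\nabla u = 0$ a.e. and $u^{m-1}\chi_{\{u>0\}} = 0$, so $\mathcal{A}(X,\nabla u) = \mathcal{A}(X,0) = 0$ (by $p$-homogeneity with $p-1 > 0$) and again $\Lambda = 0$ there. Hence $\operatorname{supp}\Lambda \subset \Omega \setminus (\{u>0\} \cup \operatorname{int}\{u=0\}) = \partial\{u>0\}$. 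The main obstacle is the careful justification of passing to the limit in the $((u-\varepsilon\zeta)^+)^m$ term and in the $G$-difference quotient; the $G$-term is standard once one notes $|\mathcal{A}(X,\xi)| \le C|\xi|^{p-1}$ follows from (G2)–(G3), and the $f$-term requires only that $t \mapsto (t^+)^m$ be Lipschitz on bounded sets, which holds since $m \ge 1$.
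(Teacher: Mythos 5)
Your proposal is correct and follows the same route as the paper's proof: test minimality against $u - \varepsilon\zeta$ with $\zeta \in C_0^\infty(\Omega)$ nonnegative, use the inclusion $\{u - \varepsilon\zeta > 0\} \subset \{u > 0\}$ to discard the $Q$-contribution with the favorable sign, and pass to the limit $\varepsilon \to 0^+$ in the $G$- and $f$-terms. The only minor divergences are technical: the paper bounds the $G$-difference quotient by convexity, namely $G(X,\nabla(u-\varepsilon\zeta)) - G(X,\nabla u) \le -\varepsilon\,\mathcal{A}(X,\nabla(u-\varepsilon\zeta))\cdot\nabla\zeta$, before letting $\varepsilon\to 0$, whereas you differentiate directly and invoke dominated convergence; and you make explicit the vanishing of $\Lambda$ on $\operatorname{int}\{u=0\}$, which the paper leaves implicit when locating the support in $\partial\{u>0\}$.
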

\begin{proof}
Let $\zeta \in C_{0}^{\infty}\left( \Omega \right)$ be a nonnegative function. Given $\varepsilon >0$, by minimality of $u$, we have
\begin{eqnarray*}
0 \nonumber &\leq & \frac{1}{\varepsilon}\int_{\Omega} G(X, \nabla \left( u - \varepsilon \zeta\right) ) - G(X, \nabla u) dX  + \frac{1}{\varepsilon}\int_{\Omega}  f\left( X\right)\left[ \left( \left( u - \varepsilon \zeta \right)^{+}\right)^{m} - u^{m} \right]dX \\ \nonumber & + & \frac{1}{\varepsilon}\int_{\Omega} Q\left( \chi_{\left\lbrace u - \varepsilon \zeta > 0 \right\rbrace} - \chi_{\left\lbrace u > 0\right\rbrace }\right) dX\\ 
& \leq & - \int_{\Omega} \mathcal{A}\left(X, \nabla \left( u - \varepsilon \zeta\right) \right) \cdot \nabla \zeta dX + \frac{1}{\varepsilon}\int_{\Omega}  f\left( X\right)\left[ \left( \left( u - \varepsilon \zeta \right)^{+}\right)^{m} - u^{m} \right]dX. \\ \nonumber
\end{eqnarray*} 
Also,
\begin{eqnarray*}
\frac{1}{\varepsilon}\int_{\Omega}  f\left( X\right)\left[ \left( \left( u - \varepsilon \zeta \right)^{+}\right)^{m} - u^{m} \right]dX 
& = & \frac{1}{\varepsilon} \int_{\left\lbrace u > 0 \right\rbrace}  f\left( X\right)\left[ \left( \left( u - \varepsilon \zeta \right)^{+}\right)^{m} - u^{m} \right]dX \\ \nonumber
& + & \frac{1}{\varepsilon}\int_{\left\lbrace u = 0 \right\rbrace}  f\left( X\right)\left[ \left( - \varepsilon \zeta \right)^{+} \right]^{m}dX \\ \nonumber
& = & \int_{\left\lbrace u > 0 \right\rbrace}  f\left( X\right) \frac{\left[ \left( \left( u - \varepsilon \zeta \right)^{+}\right)^{m} - u^{m} \right]}{\varepsilon}dX.
\end{eqnarray*} 
Taking $\varepsilon \rightarrow 0$ we obtain
\begin{eqnarray*}
- \int_{\Omega} \mathcal{A}\left(X, \nabla u \right) \cdot \nabla \zeta dX - \int_{\Omega}\left( mf\left( X \right)u^{m-1}\chi_{\left\lbrace u>0 \right\rbrace}\right) \zeta dX \geq 0.
\end{eqnarray*}
Moreover, as in Theorem \ref{t 2.3}, 
\begin{eqnarray}
\label{r 1}
\div \mathcal{A}\left(X, \nabla u \right) = mf\left( X \right)u^{m-1} \ \mbox{in} \ \left\lbrace u>0 \right\rbrace. 
\end{eqnarray}
Hence, the measure $\Lambda$ defined by
\begin{eqnarray*}
\int_{\Omega}\zeta d\Lambda:= - \int_{\Omega} \mathcal{A}\left(X, \nabla u \right) \cdot \nabla \zeta dX - \int_{\Omega} \left( mf\left( X \right)u^{m-1}\chi_{\left\lbrace u>0 \right\rbrace}\right)\zeta dX.
\end{eqnarray*} 
is a nonnegative Radon measure with support in $\Omega \cap \partial \{u > 0 \}$. 
\end{proof}

\medskip 

With the aid of the measure $\Lambda$, we can establish fine upper and lower control on the $\mathcal{H}^{n-1}$ Hausdorff measure of the free boundary, which ultimately reveal important geometric-measure information on $\partial \{u > 0 \}$.
\begin{theorem} 
\label{t 4.2}
The set $\{ u > 0  \}$ has locally finite
perimeter and for universal constants $\underline{c}$,
$\overline{C}$,  there holds
\begin{equation}\label{estimate on Hausdorff measure}
    \underline{c} r^{n-1} \le \mathcal{H}^{n-1} \left (\partial
     \{u > 0 \} \cap B_r(Z) \right ) \le \overline{C}
    r^{n-1}
\end{equation}
for any ball $B_r(Z)$ centered at a free boundary point, $Z \in \partial \{u > 0 \}$. In
particular, 
$$
	\mathcal{H}^{n-1} \left ( \partial  \{u > 0 \}  \setminus \redbdry   \{u > 0 \}  \right ) = 0.
$$
\end{theorem}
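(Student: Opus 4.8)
\medskip
\noindent\textbf{Proof strategy.}
The plan is to control the free boundary entirely through the nonnegative Radon measure $\Lambda$ produced in Lemma \ref{l 2}, by first showing it is \emph{Ahlfors $(n-1)$-regular} at free boundary points: there exist universal constants $0<c_\ast\le C_\ast$ and $r_0>0$ (depending on $\Omega'$) such that
\begin{equation}\label{pp-ahlfors}
	c_\ast\, r^{n-1}\ \le\ \Lambda\big(B_r(Z)\big)\ \le\ C_\ast\, r^{n-1},\qquad Z\in\partial\{u>0\}\cap\Omega',\quad 0<r\le r_0.
\end{equation}
Once \eqref{pp-ahlfors} is available, all assertions of the theorem follow from standard geometric measure theory, so the real work is to establish \eqref{pp-ahlfors}.

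The upper bound in \eqref{pp-ahlfors} is soft: testing the identity defining $\Lambda$ against $\zeta\in C_0^\infty(B_{2r}(Z))$ with $\zeta\equiv 1$ on $B_r(Z)$ and $|\nabla\zeta|\le C/r$, and using $|\mathcal{A}(X,\nabla u)|\le C|\nabla u|^{p-1}\le C$ (Lipschitz continuity, Theorem \ref{t 3.1}, valid since $B_{2r}(Z)\Subset\Omega$ for $r$ small) together with $|mf u^{m-1}\chi_{\{u>0\}}|\le mKM^{m-1}$, one gets $\Lambda(B_r(Z))\le C r^{-1}\Leb(B_{2r}\setminus B_r)+C\,\Leb(B_{2r})\le C_\ast r^{n-1}$. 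The lower bound is the crux. Let $h$ be the $\mathcal{A}$-harmonic replacement of $u$ in $B_r(Z)$ (if $f$ may be negative, replace it by the solution of the same Dirichlet problem with the constant forcing $-mKM^{m-1}$, which is a supersolution lying above $u$ and differs from $h$ by $O(r^{p/(p-1)})=o(r)$), so $w:=h-u$ satisfies $w\ge 0$, $w\in W^{1,p}_0(B_r(Z))$ and $\|w\|_{L^\infty}\le Cr$ since $u(Z)=0$ and $u$ is Lipschitz. Testing the $\Lambda$-identity with $\zeta=w$ (admissible by density), adding the null term $\int\mathcal{A}(X,\nabla h)\cdot\nabla w$, and invoking strict monotonicity of $\mathcal{A}=\nabla_\xi G$ (from (G3)), one obtains
$$
	\int_{B_r(Z)}w\, d\Lambda\ \ge\ c\int_{B_r(Z)}|\nabla(u-h)|^p\,dX\ -\ C r^{n+1}.
$$
The remaining point is the energy lower bound $\int_{B_r(Z)}|\nabla(u-h)|^p\,dX\ge c\, r^n$, which I would prove by compactness: after the scaling $\tilde u(X)=u(Z+rX)/r$, $\tilde h(X)=h(Z+rX)/r$ on $B_1$, a vanishing rescaled energy forces (Poincar\'e) $\tilde u-\tilde h\to 0$ in $W^{1,p}(B_1)$, hence uniformly since both are uniformly Lipschitz; the limits coincide, and $\tilde h\to h_0\ge 0$ with $h_0$ being $\mathcal{A}(Z_0,\cdot)$-harmonic and $h_0(0)=\tilde u(0)=0$, so $h_0\equiv 0$ by the strong maximum principle, whence $\tilde u\to 0$ uniformly, contradicting $\sup_{B_{1/4}}\tilde u\ge c_1/4$ from the strong nondegeneracy of Theorem \ref{t 3.3}. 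Combining, $\Lambda(B_r(Z))\ge\|w\|_{L^\infty}^{-1}\int w\,d\Lambda\ge c\,r^{-1}r^n=c_\ast r^{n-1}$. \emph{This compactness step for the energy of $u-h$ is the main obstacle; everything after \eqref{pp-ahlfors} is routine.}

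Granted \eqref{pp-ahlfors}, I would derive the upper bound in \eqref{estimate on Hausdorff measure} by a Vitali covering argument: fix $\delta\in(0,r)$ and choose pairwise disjoint balls $B_\delta(x_i)$ with $x_i\in\partial\{u>0\}\cap B_r(Z)$ and $\partial\{u>0\}\cap B_r(Z)\subseteq\bigcup_i B_{5\delta}(x_i)$; then the pre-Hausdorff measure at scale $10\delta$ satisfies
$$
	\mathcal{H}^{n-1}_{10\delta}\!\left(\partial\{u>0\}\cap B_r(Z)\right)\ \le\ C\sum_i\delta^{n-1}\ \le\ C\sum_i\Lambda\big(B_\delta(x_i)\big)\ \le\ C\,\Lambda\big(B_{2r}(Z)\big)\ \le\ \overline{C}\,r^{n-1},
$$
using the lower bound in \eqref{pp-ahlfors} on each $B_\delta(x_i)$, disjointness, and the upper bound on $B_{2r}(Z)$; letting $\delta\to 0$ gives $\mathcal{H}^{n-1}(\partial\{u>0\}\cap B_r(Z))\le\overline{C}r^{n-1}$, and covering an arbitrary $\Omega'\Subset\Omega$ by finitely many such balls yields $\mathcal{H}^{n-1}(\partial\{u>0\}\cap\Omega')<\infty$; hence, by Federer's criterion, $\{u>0\}$ has locally finite perimeter. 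For the lower bound in \eqref{estimate on Hausdorff measure}, Theorem \ref{t 4.1} gives $\Leb(B_r(Z)\cap\{u>0\})\ge\varsigma\omega_n r^n$ and $\Leb(B_r(Z)\setminus\{u>0\})\ge\varsigma\omega_n r^n$, so the relative isoperimetric inequality yields $\Per(\{u>0\};B_r(Z))\ge \underline{c}\,r^{n-1}$, and De~Giorgi's structure theorem gives $\Per(\{u>0\};B_r(Z))=\mathcal{H}^{n-1}(\redbdry\{u>0\}\cap B_r(Z))\le\mathcal{H}^{n-1}(\partial\{u>0\}\cap B_r(Z))$.

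Finally, for the last assertion: by Theorem \ref{t 4.1} every $Z\in\partial\{u>0\}$ has $\varsigma\le\Leb(B_r(Z)\cap\{u>0\})/(\omega_n r^n)\le 1-\varsigma$ for all small $r$, so no boundary point is a point of density $0$ or $1$ for $\{u>0\}$; since $\{u>0\}$ is open this means $\partial\{u>0\}$ coincides locally with the essential boundary $\partial^e\{u>0\}$. As $\{u>0\}$ has locally finite perimeter, Federer's theorem gives $\mathcal{H}^{n-1}(\partial^e\{u>0\}\setminus\redbdry\{u>0\})=0$, and therefore $\mathcal{H}^{n-1}(\partial\{u>0\}\setminus\redbdry\{u>0\})=0$.
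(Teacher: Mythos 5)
Your proposal is correct, but it takes a genuinely different route from the paper's. The paper proves the upper bound on $\Lambda(B_r)$ by testing against $\zeta_k\to\chi_{B_r}$, and then establishes the lower Hausdorff bound by a blow\textendash up contradiction: assuming $\mathcal{H}^{n-1}(\partial\{u>0\}\cap B_{r_j})/r_j^{n-1}\to 0$, it passes to weak limits of the rescaled measures $\Lambda_j$, identifies the limit measure as $\div(\mathcal{A}(Z,\nabla u_0))$, and contradicts nondegeneracy via the strong maximum principle. It does not, however, spell out how the upper Hausdorff bound follows from $\Lambda(B_r)\le Cr^{n-1}$, nor the final assertion. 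You instead make the Ahlfors $(n-1)$-regularity of $\Lambda$ the central intermediate claim: the lower bound on $\Lambda(B_r)$ is obtained by testing the identity defining $\Lambda$ against the (essentially nonnegative) difference $w=h-u$ with the $\mathcal{A}$-harmonic replacement $h$, combining strict monotonicity of $\mathcal{A}$ with a compactness argument for the rescaled energy $\int|\nabla(u-h)|^p$ --- a strategy close in spirit to Alt\textendash Caffarelli's Lemma 4.5 --- and the two Hausdorff bounds then follow from standard GMT: a Vitali covering argument for the upper bound, the relative isoperimetric inequality plus De~Giorgi's structure theorem for the lower, and Federer's criterion for the last line. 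What your route buys is that the passage from the measure estimate to the Hausdorff estimates is made explicit and self\textendash contained, at the cost of a slightly more elaborate harmonic\textendash replacement computation; what the paper's route buys is avoiding the energy lower bound on $\int|\nabla(u-h)|^p$, at the cost of leaving the upper Hausdorff bound and the $\redbdry$ assertion implicit. Two small points to tidy up in your sketch: when $f$ may change sign, $w=h-u$ is only nonnegative up to an $O(r^{p/(p-1)})$ error, so you should either carry the correction $h'$ through the monotonicity step (noting the extra boundary term has the right sign) or estimate $\int w\,d\Lambda\le \|w^+\|_\infty\Lambda(B_r)$ using the a priori upper Ahlfors bound to absorb the negative part; and the compactness step for the energy lower bound should be run uniformly in both the center $Z\in\partial\{u>0\}\cap\Omega'$ and the radius $r\le r_0$, which is straightforward but worth stating.
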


\begin{proof}
Through a  suitable approximation scheme,  $0 \leq \zeta_k\leq 1$, $\zeta_k$ test function with $\zeta_k \to \chi_{B_{r}\left( Z \right)}$,  we have (for almost $r >0$)
\begin{eqnarray}
\label{t 4.2.1}
\int_{B_{r}\left( Z \right)}d\Lambda & = & \nonumber  \int_{\partial B_{r}\left( Z \right)} \mathcal{A}\left(X, \nabla u \right) \cdot \nu dS_{X} - \int_{\Omega}mf\left( X \right)u^{m-1}\chi_{\left\lbrace u>0 \right\rbrace}\zeta dX \\ \nonumber  
& \leq & C \Vert \nabla u \Vert^{p-1}_{L\infty} r^{n-1} + C r^{n-1} \\ 
& \leq & Cr^{n-1},  
\end{eqnarray}
where $C>0$ is a universal constant.  The upper inequality in (\ref{estimate on Hausdorff measure}) is verified. 

To check the lower estimate, let us assume by contradiction that there exists a sequence of positive real numbers $r_{j}$ with $r_{j}\searrow 0$ as $j \rightarrow \infty$ and 
\begin{eqnarray}
\label{t 4.2.2} 
\frac{\mathcal{H}^{n-1} \left (\partial
     \{u > 0 \} \cap B_{r_{j}}(Z) \right )}{r^{n-1}_{j}}\rightarrow 0. 
\end{eqnarray}
With the notation used in Theorem \ref{t 4.1} and Lemma \ref{l 2}, we obtain the sequence of nonnegative measures $\Lambda_{j}$ in $B_{\frac{4}{5}}\left( 0 \right)$, defined by 
\begin{eqnarray}
\label{t 4.2.3}
\Lambda_{j}:= \left[ \div\left( \mathcal{A}\left(Z + r_{j}X, \nabla u_{j} \right)\right) - r^{m}_{j}mf\left( Z + r_{j}X \right)\chi_{\left\lbrace u_{j}>0 \right\rbrace } u^{m-1}_{j}\right] dX.  
\end{eqnarray}
By compactness we can assume that $\Lambda_{j} \rightharpoonup \Lambda_{0}$ in the sense of measures. Moreover, using (\ref{t 4.2.2}) we have
\begin{eqnarray}
\label{t 4.2.4}
\Lambda_{j} \rightharpoonup 0.  
\end{eqnarray}
In the sequel we will show that
\begin{eqnarray}
\label{t 4.2.5}
\Lambda_{0}:= \div \left( \mathcal{A}\left(Z , \nabla u_{0} \right)\right)dX.  
\end{eqnarray}
From the uniform positive density property we know $\Leb \left( \partial \{ u_{0} > 0 \}\right) = 0$. Thus,  we  only need to verify (\ref{t 4.2.5}) for balls $B$ entirely contained in $\{u_{0} = 0 \}$ and in $\{u_{0} > 0 \}$. 
Let $B \subset \{u_{0} > 0 \}$. Define 
\begin{eqnarray*}
\label{}
A_{j}\left( X \right):= \mathcal{A}\left(Z + r_{j}X, \nabla u_{j} \right), \ \ \forall X \in B_{\frac{4}{5}}\left( 0 \right).    
\end{eqnarray*} 
By Lipschitz regularity of $u$ and (G2) we have
\begin{eqnarray*}
\label{}
\vert A_{j} \vert \leq C\left(n,\Lambda \right)\vert \nabla u_{j} \vert^{p-1}\leq C.  
\end{eqnarray*}
Thus, we may extract a subsequence (we will denote by $A_{j}$) such that  
\begin{eqnarray*}
\label{}
A_{j} \rightarrow A_{0} \ \mbox{weak-}\star \mbox{ in } \ L^{\infty} (B_{\frac{4}{5}}). 
\end{eqnarray*} 
Furthermore, $u_{j}$ converges in the $C^{1,\alpha}$ topology to $u_{0}$ in $B$ (see Remark \ref{r 3}). Hence, 
\begin{eqnarray*}
\label{}
 A_{j} \rightarrow \mathcal{A}\left(Z , \nabla u_{0} \right) \  \mbox{weak-}\star \mbox{ in }  \ L^{\infty}\left( B \right).  
\end{eqnarray*}
Also we have
\begin{eqnarray}
\label{t 4.2.9}
\Big | \int_{B}r^{m}_{j}mf\left( Z + r_{j}X \right)\chi_{\left\lbrace u_{j}>0 \right\rbrace}u^{m-1}_{j} dX \Big | \leq r^{m}_{j}C \Leb \left( B\right) \rightarrow 0.  
\end{eqnarray}
Hence, for $B \subset \{u_{0} > 0 \}$, indeed  (\ref{t 4.2.5}) does hold. Now suppose $B \subset \{u_{0} = 0 \}$. Clearly,
\begin{eqnarray*}
\label{}
\left[ \div \left( \mathcal{A}\left(Z , \nabla u_{0} \right)\right)dX \right]  \left( B \right) = 0.  
\end{eqnarray*}
On the other hand, if $B_{k}$ is a sequence of balls such that $B_{k}\nearrow B$ then for some $j_{k} \in \mathbb{N}$ we have 
\begin{eqnarray}
\label{th 4.2.10}
u_{j} \equiv 0 \ \mbox{in} \ B_{k} \ \mbox{for all} \ j > j_{k}.
\end{eqnarray}
Indeed, let $\tilde{B} \subset B$. If there were a subsequence $u_{j_{k}}$ satisfying $u_{j_{k}}\neq 0$ in $\tilde{B}$ then, by strong nondegenracy property (Theorem \ref{t 3.3}), there should exist points $P_{k_{j}} \in \tilde{B}$ such that
\begin{eqnarray}
\label{}
u_{j_{k}}\left( P_{k_{j}}\right) \geq c > 0.  
\end{eqnarray}
Passing to another subsequence we can assume $P_{k_{j}} \rightarrow P \in \tilde{B}$. Since $u_{j_{k}} \rightarrow u_{0} $ uniformly we obtain $u_{0}\left( P \right) > 0$ which is a contradiction.\\
Thus, from (\ref{th 4.2.10}) we obtain 
\begin{eqnarray}
\label{th 4.2.11}
 \Lambda_{j}\left( B \right) \rightarrow 0.  
\end{eqnarray}
Therefore, (\ref{t 4.2.5})  holds for any $B \subset \{u_{0} = 0 \}$, and  combining (\ref{t 4.2.4}) and (\ref{t 4.2.5}) we find 
\begin{eqnarray}
\label{}
\div\left( \mathcal{A}\left( Z, \nabla u_{0} \right) \right) = 0 \ \ \mbox{in} \ \ B_{\frac{4}{5}\left( 0 \right)}.  
\end{eqnarray}
However, as before, this drives us to a contradiction on the nondegeneracy property of $u_0$, Theorem \ref{t 3.3}.
\end{proof} 

\begin{theorem} 
\label{t 5.2}
Let $u$ be a minimizer of (\ref{Funct Intro}). Then
\begin{eqnarray*}
	\div  \left ( \mathcal{A}\left( X, \nabla u \right) \right) -  mf(X)u^{m-1} \chi_{\{u > 0\}} = Q \lfloor \redbdry \{u > 0 \},
\end{eqnarray*}	
in the sense of measures.
\end{theorem}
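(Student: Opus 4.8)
The plan is to identify the measure $\Lambda = \div(\mathcal{A}(X,\nabla u)) - mf(X)u^{m-1}\chi_{\{u>0\}}$, already known by Lemma \ref{l 2} to be a nonnegative Radon measure supported on $\partial\{u>0\}$, with the weighted surface measure $Q\, \mathcal{H}^{n-1}\lfloor \redbdry\{u>0\}$. Since Theorem \ref{t 4.2} tells us $\partial\{u>0\}$ differs from $\redbdry\{u>0\}$ by an $\mathcal{H}^{n-1}$-null set and that $\mathcal{H}^{n-1}\lfloor\partial\{u>0\}$ is locally finite, by the Radon--Nikodym theorem it suffices to compute the density
$$
	\lim_{r\to 0} \frac{\Lambda(B_r(Z))}{\mathcal{H}^{n-1}(\redbdry\{u>0\}\cap B_r(Z))} = Q(Z)
$$
at $\mathcal{H}^{n-1}$-a.e.\ point $Z \in \redbdry\{u>0\}$, i.e.\ at reduced boundary points where a measure-theoretic normal $\nu(Z)$ exists and $\{u>0\}$ has a half-space blow-up. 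Absolute continuity of $\Lambda$ with respect to $\mathcal{H}^{n-1}\lfloor\partial\{u>0\}$ follows from the two-sided bound \eqref{estimate on Hausdorff measure} together with the upper bound $\Lambda(B_r(Z)) \le Cr^{n-1}$ from \eqref{t 4.2.1}; a standard covering argument then gives $\Lambda \ll \mathcal{H}^{n-1}\lfloor\partial\{u>0\}$, so $\Lambda = \theta(X)\,\mathcal{H}^{n-1}\lfloor\redbdry\{u>0\}$ for some Borel density $\theta$.

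To compute $\theta(Z)$, I would perform a blow-up at a reduced boundary point $Z$. Rescale $u_r(X) := u(Z+rX)/r$; by the Lipschitz bound (Theorem \ref{t 3.1}), nondegeneracy (Theorems \ref{t 3.2}, \ref{t 3.3}) and the uniform density estimate (Theorem \ref{t 4.1}), a subsequence $u_r \to u_0$ locally uniformly in $C^{0,\beta}$ and weakly in $W^{1,p}_{\loc}$, with $\{u_r>0\} \to \{u_0>0\}$ locally in $L^1$ and in Hausdorff distance, and since $Z\in\redbdry\{u>0\}$ the limit positivity set is a half-space $\{X\cdot\nu>0\}$. Along the way the lower-order terms scale away: $r^m f(Z+rX)u_r^{m-1}\chi_{\{u_r>0\}} \to 0$ since $r^m \to 0$ and $u_r$ is uniformly bounded. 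Hence $u_0$ is $\mathcal{A}(Z,\cdot)$-harmonic in the half-space $\{X\cdot\nu>0\}$, vanishes on $\{X\cdot\nu = 0\}$, is nonnegative, Lipschitz and nondegenerate, which by the classical rigidity for one-phase homogeneous global solutions (and the $1$-homogeneity forced by the scaling plus nondegeneracy) forces $u_0(X) = \alpha (X\cdot\nu)^+$ for some slope $\alpha>0$. The corresponding limiting measure is $\Lambda_0 = \div(\mathcal{A}(Z,\nabla u_0)) = \langle \mathcal{A}(Z,\alpha\nu),\nu\rangle\,\mathcal{H}^{n-1}\lfloor\{X\cdot\nu = 0\} = p\,G(Z,\alpha\nu)\,\mathcal{H}^{n-1}\lfloor\{X\cdot\nu=0\}$, using \eqref{th 2.2.3}; comparing with the blow-up of $\Lambda$ (which converges, after rescaling $r^{1-n}\Lambda(B_r(Z)\cap\cdot)$, weakly to $\theta(Z)\,\mathcal{H}^{n-1}\lfloor\{X\cdot\nu=0\}$ at a.e.\ density point) gives $\theta(Z) = p\,G(Z,\alpha\nu) = \langle\mathcal{A}(Z,\alpha\nu),\nu\rangle$.

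It remains to pin down the slope $\alpha$, and this is the main obstacle: I must show $\langle\mathcal{A}(Z,\alpha\nu),\nu\rangle = Q(Z)$, i.e.\ that the free boundary condition holds in the blow-up limit. This is exactly the heart of the Alt--Caffarelli analysis and is obtained from first-variation (domain-variation / Pohozaev-type) identities: perturbing the minimizer by inner variations $u_\varepsilon(X) = u(X + \varepsilon\,\Phi(X))$ with $\Phi \in C_0^\infty$ yields, using minimality and the scaling structure (G3) together with (g1), an identity of the form $\int \big(p\,G(X,\nabla u)\,\div\Phi - p\,\langle\mathcal{A}(X,\nabla u)\,D\Phi,\nabla u\rangle\big) + (\text{lower order}) = \int Q\,\div\Phi\,\chi_{\{u>0\}} + (\text{terms controlled by } \nabla_X G, \nabla Q, f)$, which after blow-up at a reduced boundary point localizes to the statement $\langle\mathcal{A}(Z,\alpha\nu),\nu\rangle = Q(Z)$ on the flat free boundary; care is needed to justify passing the domain-variation identity to the limit (using $C^{1,\alpha}_{\loc}$ convergence of $u_r$ on the positivity side, Theorem \ref{t 4.2}'s Hausdorff bounds to control boundary integrals, and the vanishing of the $r^m$-order terms). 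Combining, $\theta(Z) = Q(Z)$ for $\mathcal{H}^{n-1}$-a.e.\ $Z\in\redbdry\{u>0\}$, and therefore $\Lambda = Q\,\mathcal{H}^{n-1}\lfloor\redbdry\{u>0\}$, which is the asserted identity (the notation $Q\lfloor\redbdry\{u>0\}$ in the statement being shorthand for $Q\,\mathcal{H}^{n-1}\lfloor\redbdry\{u>0\}$).
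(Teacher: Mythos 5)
Your proposal is correct and is precisely the content of the paper's one-line proof, which points to "Theorem \ref{t 4.2} and standard Hadamard's domain variation type of argument" in the Alt--Caffarelli tradition: you use the two-sided Hausdorff estimates from Theorem \ref{t 4.2} and the bound \eqref{t 4.2.1} to get $\Lambda = \theta\,\mathcal{H}^{n-1}\lfloor\redbdry\{u>0\}$ via Radon--Nikodym, then compute the density $\theta(Z)$ by blow-up and pin down the slope through the domain-variation identity. This is the same route the paper invokes; you have simply made explicit the scaffolding the authors leave to the reader.
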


\begin{proof}
Using Theorem \ref{t 4.2} and standard Hadamard's domain variation type of argument, we obtain the result.
\end{proof}
\vspace{0,5cm}
\begin{remark}
\label{r 3}
Let $u$ be a minimizer of (\ref{Funct Intro}) in $\Omega$ and $B_{r_{j}}\left( X_{j} \right) \subset \Omega$ be a sequence of balls with $r_{j}\rightarrow 0$, $X_{j}\rightarrow X_{0} \in \Omega$, and $u\left( X_{j}\right)= 0$. Consider the sequence blow-up
\begin{eqnarray*}
u_{j}\left( X \right) = \frac{1}{r_{j}}u\left( X_{j} + r_{j}X\right). 
\end{eqnarray*}
Since $u_{j}$ are uniformly Lipschitz continuous, for a subsequence,
\begin{eqnarray}
\label{1}
u_{j}\rightarrow u_{0} \ \mbox{in} \ C_{loc}^{\alpha}\left( \mathbb{R}^{n}\right) \mbox{ for every} \ 0< \alpha <1, 
\end{eqnarray}
\begin{eqnarray}
\label{2}
\nabla u_{j}\rightarrow \nabla u_{0} \ \mbox{weak-}\star  \mbox{ in}  \ L_{loc}^{\infty}\left( \mathbb{R}^{n}\right), 
\end{eqnarray}
\begin{eqnarray}
\label{3}
\partial \left\lbrace u_{j} > 0\right\rbrace \rightarrow \partial \left\lbrace u_{0} > 0\right\rbrace  \ \mbox{locally in the Hausdorff distance}, 
\end{eqnarray}
\begin{eqnarray}
\label{4}
\chi_{\left\lbrace u_{j} > 0\right\rbrace} \rightarrow \chi_{\left\lbrace u_{0} > 0\right\rbrace}  \ \mbox{in} \ L_{loc}^{1}\left( \mathbb{R}^{n}\right). 
\end{eqnarray}
Moreover, by classical truncation argument, see for intance, \cite{BF},  
\begin{eqnarray}
\label{5}
\nabla u_{j}\rightarrow \nabla u_{0} \ \mbox{a.e.} 
\end{eqnarray}
Also, by Theorem \ref{t 2.3}, assertions (\ref{1})--(\ref{5}) and $C^{1,\alpha}$ convergence within the positive set,  we obtain 
\begin{eqnarray*}
-\int_{\left\lbrace u_{0} > 0\right\rbrace} \mathcal{A}\left( X_{0}, \nabla u_{0} \right) \cdot \nabla \zeta dX & = & -\lim_{j \to \infty} \int_{\left\lbrace u_{0} > 0\right\rbrace} \mathcal{A}\left( X_{j} + r_{j}X, \nabla u_{j} \right) \cdot \nabla \zeta dX \\ \nonumber
& = & \lim_{j \to \infty} r^{m}_{j}\int_{\lbrace u_{j} > 0 \rbrace}mf\left( X_{j} + r_{j}X \right)u^{m-1}_{j}\zeta dX\\ \nonumber
& = & 0,
\end{eqnarray*}
for all $ \zeta \in C^{\infty}_{0}\left( \left\lbrace u_{0} > 0\right\rbrace \right)$. In fact,
\begin{eqnarray*}
\Big| r^{m}_{j}\int_{\lbrace u_{j} > 0 \rbrace}mf\left( X_{j} + r_{j}X \right)u^{m}_{j}\zeta dX \Big| \leq r^{m}_{j}C, 
\end{eqnarray*}
$\forall \ \zeta \in C^{\infty}_{0}\left( \left\lbrace u_{0} > 0\right\rbrace \right)$. Hence, 
\begin{eqnarray}
\div\left( \mathcal{A}\left( X_{0}, \nabla u_{0} \right) \right)= 0 \ \mbox{in} \  \lbrace u_{0} > 0 \rbrace.
\end{eqnarray}
\end{remark}

\bigskip

We  are in position to obtain the blow-up minimization problem, i.e., the minimization feature of limiting blow-up functions with respect to $B_{r_{j}}\left( X_{j} \right) $, $r_j \searrow 0$, defined as
$$
	u_j(Y) := \dfrac{1}{r_j} u(X_j + r_j Y).
$$

\begin{lemma}
\label{le 5.3}
If $u\left( X_{j} \right)=0$, $X_{j} \rightarrow X_{0} \in \Omega$, then any blow up limit $u_{0}$ with respect to $B_{r_{j}}\left( X_{j} \right)$ is minimizer of the functional
\begin{eqnarray}
\label{f l 1.1}
\mathfrak{F}_{0}\left( v \right):= \int_{B_{1}\left( 0 \right)} G\left( X_{0}, \nabla v\right) + Q\left( X_{0}\right)  \chi_{\left\lbrace v > 0 \right\rbrace } dX. 
\end{eqnarray}
\end{lemma}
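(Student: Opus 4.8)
The plan is to show that any blow-up limit $u_0$ is a minimizer of $\mathfrak{F}_0$ by a standard competitor-comparison argument: given an arbitrary competitor $v$ that agrees with $u_0$ near $\partial B_1(0)$, I will lift it back to a competitor for the original (rescaled) functional $\mathfrak{F}$ in $B_{r_j}(X_j)$, invoke minimality of $u$, and then pass to the limit using the convergences collected in Remark \ref{r 3}. The two ingredients that must be controlled in the limit are (i) the lower-order term $G$, for which I use the continuity hypothesis (G1), the $p$-growth bound (G2) and weak lower semicontinuity along $\nabla u_j \rightharpoonup \nabla u_0$; and (ii) the discontinuous term $Q\chi_{\{\cdot>0\}}$, for which I use the $C^{0,\beta}$-continuity of $Q$ together with the $L^1_{\loc}$ convergence $\chi_{\{u_j>0\}}\to\chi_{\{u_0>0\}}$ from \eqref{4}.

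First I would fix $v\in W^{1,p}(B_1(0))$ with $v=u_0$ on $\partial B_1(0)$ (more precisely $v-u_0\in W_0^{1,p}(B_1(0))$), and for $\delta>0$ small introduce a cut-off to glue $v$ to $u_j$: set $v_j := u_j + \eta_\delta (v - u_0) + \eta_\delta(u_0-u_j)$ on $B_1(0)$, where $\eta_\delta$ is a cutoff equal to $1$ on $B_{1-\delta}$ and vanishing near $\partial B_1$; this makes $v_j = u_j$ near $\partial B_1(0)$, so $w_j(X):= r_j v_j\big((X-X_j)/r_j\big)$ is an admissible competitor for $u$ in $B_{r_j}(X_j)$ (it matches $u$ on $\partial B_{r_j}(X_j)$ and hence extends by $u$ outside). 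Minimality of $u$ gives $\mathfrak{F}\big(u;B_{r_j}(X_j)\big)\le \mathfrak{F}\big(w_j;B_{r_j}(X_j)\big)$. After rescaling by $r_j$ (using homogeneity (G3) of $G$ in $\xi$, which produces the scaling $r_j^{n}$ on the $G$ term, $r_j^{n}$ on the $Q$ term, and $r_j^{n+m}$ on the $f(u^+)^m$ term), dividing by $r_j^n$ yields
\begin{eqnarray*}
\int_{B_1(0)} G\big(X_j+r_jX,\nabla u_j\big) + Q(X_j+r_jX)\chi_{\{u_j>0\}}\,dX \le \int_{B_1(0)} G\big(X_j+r_jX,\nabla v_j\big) + Q(X_j+r_jX)\chi_{\{v_j>0\}}\,dX + r_j^m C.
\end{eqnarray*}
On the left, (G1), uniform convergence $X_j+r_jX\to X_0$, weak lower semicontinuity of the integral functional $w\mapsto\int G(X_0,\nabla w)$, and \eqref{4} give
$$
\int_{B_1(0)} G(X_0,\nabla u_0) + Q(X_0)\chi_{\{u_0>0\}}\,dX \le \liminf_{j\to\infty}\Big[\text{LHS}_j\Big].
$$
On the right, since $v_j\to v$ strongly in $W^{1,p}(B_{1-\delta})$ and $v_j\to u_0$ strongly in $W^{1,p}(B_1\setminus B_{1-\delta})$, the $G$-integrals converge (using (G2) to dominate on the annulus and continuity (G1)); for the $Q$-term I split $B_1 = B_{1-\delta}\cup(B_1\setminus B_{1-\delta})$, use $\chi_{\{v_j>0\}}\to\chi_{\{v>0\}}$ in $L^1(B_{1-\delta})$ on the inner ball (away from a Lebesgue-null set, using the nondegeneracy/positive-density Theorem \ref{t 4.1} to ensure $\Leb(\partial\{v>0\})$ contributes nothing when $v=u_0$, and a direct estimate for general $v$ since $|\chi_{\{v_j>0\}}-\chi_{\{v>0\}}|$ is controlled there), and the bound $|Q|\le\epsilon^{-1}$ on the annulus which contributes $O(\delta)$.

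Putting these together and letting $j\to\infty$ then $\delta\to 0$ gives $\mathfrak{F}_0(u_0)\le \mathfrak{F}_0(v)$, which is the claimed minimality. \textbf{The main obstacle} I anticipate is the discontinuous term: making rigorous the convergence of $\int Q(X_j+r_jX)\chi_{\{v_j>0\}}$ on the inner ball, since $\chi_{\{v_j>0\}}$ need not converge to $\chi_{\{v>0\}}$ in $L^1$ for a completely arbitrary competitor $v$ (the positivity sets of the glued functions $v_j$ can differ from that of $v$ on the thin region where $u_0$ and $u_j$ disagree). The resolution is to choose the gluing so that $\{v_j>0\}$ and $\{v>0\}$ differ only inside the annulus $B_1\setminus B_{1-\delta}$ (where $|Q|$ is bounded and the measure is $O(\delta)$) and on $\{u_j=0\}\cap\{u_0=0\}$, which by \eqref{4} and \eqref{3} has vanishing measure in the limit; one should also first reduce to competitors $v$ with $v\ge 0$ (as in Theorem \ref{t 2.1}, truncation decreases $\mathfrak{F}_0$) and to $v$ that already equals $u_0$ on a neighbourhood of $\partial B_1$, which is harmless by an approximation/scaling argument.
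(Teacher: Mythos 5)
Your proposal is correct and follows essentially the same route as the paper: compare $u_j$ against a competitor $v_j$ obtained by gluing an arbitrary $v$ (with $v-u_0\in W_0^{1,p}(B_1)$) to $u_j$ near $\partial B_1$ via a cutoff, invoke minimality of $u$ in $B_{r_j}(X_j)$, and pass to the limit using the compactness listed in Remark~\ref{r 3}. The paper's cutoff is $v_j := v + (1-\eta)(u_j-u_0)$, which has the small advantage that $\nabla v_j \to \nabla v$ a.e.\ for fixed $\eta$ as $j\to\infty$ (since $u_j\to u_0$ uniformly and $\nabla u_j \to \nabla u_0$ a.e.), with no extra reduction needed; your convex-combination gluing $v_j=(1-\eta_\delta)u_j+\eta_\delta v$ forces you to first assume $v\equiv u_0$ near $\partial B_1$, which you correctly flag and dispatch by approximation. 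More importantly, the ``main obstacle'' you anticipate for the discontinuous term is overstated: by your own construction $v_j\equiv v$ on $B_{1-\delta}$, so $\chi_{\{v_j>0\}}=\chi_{\{v>0\}}$ there with no convergence to prove; and the paper disposes of the whole term in one line via the pointwise bound $\chi_{\{v_j>0\}}\le \chi_{\{v>0\}}+\chi_{\{\eta<1\}}$, letting $\eta\to 1$ only at the very end. There is no need to invoke the positive-density estimate or to worry about Lebesgue-null sets of the competitor, and there is no need to reduce to $v\ge 0$. So while every step in your outline can be made to work, the argument simplifies considerably once you notice the one-sided characteristic-function bound.
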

\begin{proof}
Set $D = B_{1}\left( 0\right)$. Take any $v$, $v - u_{0} \in H^{1}_{0}\left( D \right)$, $\eta \in C^{\infty}_{0}\left( D \right)$, $0 \leq \eta \leq 1$. Consider
\begin{eqnarray*}
v_{j}:= v + \left( 1 - \eta \right)\left( u_{j} - u_{0}\right) \ \mbox{and} \ Q_{j}\left( X\right):= Q\left( X_{j} + r_{j}X\right), \ \ \forall X \in D. 
\end{eqnarray*}
We will denote
\begin{eqnarray}
\label{f l 1.1}
\mathfrak{F}_{j}\left( v \right):= \int_{B_{1}\left( 0 \right)} G\left( X_{j} + r_{j}X, \nabla v\right) + r^{m}_{j}f\left( X_{j} + r_{j}X \right)\left( v^{+}\right)^{m} + Q_{j}\left( X \right) \chi_{\left\lbrace v > 0 \right\rbrace } dX. 
\end{eqnarray}
Since $v_{j} = u_{j}$ in $\partial D$ and $u$ is local minimum we have for large $j$
\begin{eqnarray}
\label{l 5.3}
\mathfrak{F}_{j}\left( u_{j} \right) &\leq & \nonumber \mathfrak{F}_{j}\left( v_{j} \right).
\end{eqnarray}
From the fact that $\vert \nabla u_{j} \vert \leq C$ and $\nabla u_{j} \rightarrow \nabla u_{0} \ \ \mbox{a.e.}$, we conclude 
\begin{eqnarray*}
\int_{D} G\left( X_{j} + r_{j}X, \nabla u_{j}\right) dX \rightarrow \int_{D}  G\left( X_{0} , \nabla u_{0}\right) dX. 
\end{eqnarray*}
Similarly
\begin{eqnarray*}
\int_{D} G\left( X_{j} + r_{j}X, \nabla v_{j}\right) dX \rightarrow \int_{D} G\left( X_{0}, \nabla v_{0}\right) dX. 
\end{eqnarray*}
Moreover, we have
\begin{eqnarray*}
\Big|\int_{D} r^{m}_{j}f\left(  X_{j} + r_{j}X \right)\left(  u^{m}_{j} - \left( v^{+}_{j}\right)^{m}\right)dX \Big| &\leq & r^{m}_{j}C \int_{D} \left( \vert u_{j} \vert^{p*} + \vert v_{j} \vert^{p*}\right) dX.   
\end{eqnarray*}
Since there exists a universal constant $C>0$ such that $\Vert u_{j} \Vert_{L^{\infty}} \leq C$ (and $\Omega$ is bounded) we have
\begin{eqnarray}
 \Vert u_{j} \Vert_{L^{p*}} \leq C.
\end{eqnarray}
By definition of $v_{j}$ 
\begin{eqnarray}
 \Vert v_{j} \Vert_{L^{p*}} \leq C \Vert v \Vert_{L^{p*}} + C  \left( \Vert u_{j} \Vert_{L^{p*}} +  \Vert u_{0} \Vert_{L^{p*}}\right).
\end{eqnarray}
Therefore,
\begin{eqnarray*}
\Big|\int_{D} r^{m}_{j}f\left(  X_{j} + r_{j}X \right)u^{m}_{j} - r_{j}f\left(  X_{j} + r_{j}X\right)\left( v^{+}_{j}\right)^{m}dX \Big| \rightarrow 0.   
\end{eqnarray*}
Also, we have
\begin{eqnarray*}
\Big|\int_{D} \left( Q_{j} - Q\left( X_{0} \right) \right) \left( \chi_{\left\lbrace u_{j} > 0 \right\rbrace} - \chi_{\left\lbrace v_{j} > 0\right\rbrace} \right) dX \Big | \leq 2\int_{D} \vert Q_{j} - Q\left( X_{0} \right)\vert dX, 
\end{eqnarray*}
and using the continuity of the function $Q$ we obtain
\begin{eqnarray*}
\int_{D} \left( Q_{j} - Q\left( X_{0} \right) \right) \left( \chi_{\left\lbrace u_{j} > 0 \right\rbrace} - \chi_{\left\lbrace v_{j} > 0\right\rbrace} \right) dX \rightarrow 0. 
\end{eqnarray*}
Finally,
\begin{eqnarray*}
\chi_{\left\lbrace v_{j} > 0 \right\rbrace} \leq \chi_{\left\lbrace v > 0\right\rbrace}  + \chi_{\left\lbrace \eta < 1 \right\rbrace}. 
\end{eqnarray*}
and (see (\ref{4})) 
\begin{eqnarray*} 
\int_{D}\chi_{\left\lbrace u_{j} > 0\right\rbrace} \rightarrow \int_{D}\chi_{\left\lbrace u_{0} > 0\right\rbrace}dX. 
\end{eqnarray*}
Then follows from (\ref{l 5.3}) that
\begin{eqnarray*}
\int_{D}  G\left( X_{0}, \nabla u_{0} \right) + Q\left( X_{0}\right)\chi_{\left\lbrace u_{0} > 0 \right\rbrace} dX \leq \int_{D} G\left( X_{0}, \nabla v \right) + \left( \chi_{\left\lbrace v > 0\right\rbrace}  + \chi_{\left\lbrace \eta < 1 \right\rbrace} \right)Q\left( X_{0}\right) dX. 
\end{eqnarray*}
Taking $\eta \to 1$ finishes up the proof.
\end{proof}

\bigskip

As a consequence, we can classify blow-ups at points in the reduced free boundary.

\begin{theorem}\label{FBC pointwise sense} Let $X_0 \in
\redbdry \{ u > 0 \}$. Then, for any $X \in
\{ u > 0 \}$ near $X_0$, we have
$$
     u (X) = \alpha(X_0) \left  \langle X - X_0, \nu(X_0)
    \right \rangle^{+}  + o(|X-X_0|),
$$
where  $\nu(X_0)$ is the theoretical normal vector to $\redbdry \{u > 0 \}$ at $X_0$ and
\begin{equation}\label{FBC - red}
	\alpha(X_0) = \sqrt[p-1]{\frac{Q (X_0)}{  \mathcal{A}\left( X_0,\nu(X_0)\right)  \cdot \nu(X_0)  }}.
\end{equation}
\end{theorem}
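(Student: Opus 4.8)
The plan is to analyze the blow-up of $u$ at $X_0$, to use the structure theory of sets of finite perimeter to recognise that the positivity set blows up to a half-space, to classify the resulting half-plane solution of the frozen-coefficient one-phase problem, and finally to read off the slope from the free boundary condition of Theorem~\ref{t 5.2}.

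\emph{Blow-up.} Since $X_0\in\partial\{u>0\}$ and $u$ is continuous, $u(X_0)=0$. For $r_j\searrow0$ put $u_j(Y):=r_j^{-1}u(X_0+r_jY)$. By the Lipschitz bound of Theorem~\ref{t 3.1} the family $\{u_j\}$ is precompact in $C^{0,\alpha}_{\loc}$, and along a subsequence $u_j\to u_0$ with the convergences collected in Remark~\ref{r 3}. By Lemma~\ref{le 5.3} every such limit minimizes on $B_1(0)$ the functional $\mathfrak{F}_0(v):=\int G(X_0,\nabla v)+Q(X_0)\chi_{\{v>0\}}\,dX$; rescaling the blow-up radii shows $u_0$ minimizes $\mathfrak{F}_0$ on every ball, hence $u_0$ is a nonnegative local minimizer of $\mathfrak{F}_0$ on $\mathbb{R}^n$. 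Moreover $u_0$ is globally Lipschitz, it is $\mathcal{A}(X_0,\cdot)$-harmonic in $\{u_0>0\}$ by Remark~\ref{r 3}, and it inherits the nondegeneracy of Theorem~\ref{t 3.3}: $\sup_{B_\rho(Z)}u_0\ge c_1\rho$ for every $Z\in\partial\{u_0>0\}$.

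\emph{The positivity set and classification of $u_0$.} Since $X_0\in\redbdry\{u>0\}$ and $\{u>0\}$ has locally finite perimeter (Theorem~\ref{t 4.2}), the definition of the reduced boundary together with De Giorgi's structure theorem gives that the rescalings of $\chi_{\{u>0\}}$ converge in $L^1_{\loc}$ to $\chi_H$, where $H:=\{Y:\langle Y,\nu(X_0)\rangle>0\}$ and $\nu(X_0)$ is the measure-theoretic inner normal. As $\chi_{\{u_j>0\}}=\chi_{\{u>0\}}(X_0+r_j\cdot)\to\chi_{\{u_0>0\}}$ in $L^1_{\loc}$ by Remark~\ref{r 3}, we get $\{u_0>0\}=H$ up to a null set, and continuity of $u_0$ together with the inherited nondegeneracy upgrades this to $\{u_0>0\}=H$ and $\partial\{u_0>0\}=\partial H$. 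Thus $u_0$ is an $\mathcal{A}(X_0,\cdot)$-harmonic, Lipschitz, nondegenerate function in the half-space $H$ vanishing on $\partial H$, i.e. a half-plane solution of the one-phase problem with constant coefficients. Arguing as in \cite{AC} (and \cite{DP} for the degenerate case $p\ne2$), and using that $\mathcal{A}(X_0,\cdot)$ is positively homogeneous of degree $p-1$, such a solution is one-dimensional: a tangential derivative of $u_0$ solves the linearised equation, is bounded and vanishes on $\partial H$, so a Liouville-type argument forces it to vanish identically; hence $u_0(X)=\psi(\langle X,\nu(X_0)\rangle)$, and $\tfrac{d}{dt}\bigl(\psi'(t)^{p-1}\,\mathcal{A}(X_0,\nu(X_0))\cdot\nu(X_0)\bigr)=0$ on $\{t>0\}$ forces $\psi(t)=\alpha t$, i.e. $u_0(X)=\alpha\langle X,\nu(X_0)\rangle^{+}$ for some $\alpha>0$.

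\emph{The slope and conclusion.} Because $\mathfrak{F}_0$ is an admissible functional (with $f\equiv0$ and $G$, $Q$ frozen at $X_0$), Theorem~\ref{t 5.2} applies to $u_0$ and gives $\div(\mathcal{A}(X_0,\nabla u_0))=Q(X_0)\lfloor\redbdry\{u_0>0\}=Q(X_0)\,\mathcal{H}^{n-1}\lfloor\partial H$. On the other hand, for $u_0(X)=\alpha\langle X,\nu(X_0)\rangle^{+}$ one has $\nabla u_0=\alpha\,\nu(X_0)\chi_H$, hence $\mathcal{A}(X_0,\nabla u_0)=\alpha^{p-1}\mathcal{A}(X_0,\nu(X_0))\chi_H$ by homogeneity, and since $\div(V\chi_H)=(V\cdot\nu(X_0))\,\mathcal{H}^{n-1}\lfloor\partial H$ for a constant vector $V$, comparing the two expressions yields $\alpha^{p-1}\,\mathcal{A}(X_0,\nu(X_0))\cdot\nu(X_0)=Q(X_0)$, that is, $\alpha=\alpha(X_0)$ as in \eqref{FBC - red} (the denominator equals $pG(X_0,\nu(X_0))\ge p\lambda>0$ by \eqref{th 2.2.3} and (G2), so $\alpha(X_0)$ is well defined). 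Finally, the limit profile obtained this way is independent of the chosen sequence $\{r_j\}$: its positivity set is always $H$ and its slope is always $\alpha(X_0)$. Therefore the whole family $r^{-1}u(X_0+r\,\cdot)$ converges, uniformly on compact sets as $r\to0$, to $\alpha(X_0)\langle\,\cdot\,,\nu(X_0)\rangle^{+}$, which is precisely the asymptotic development in the statement.

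\emph{Main difficulty.} The crux is the classification step. That the blow-up of the \emph{set} $\{u>0\}$ at a reduced boundary point is a half-space is immediate from the definition of $\redbdry\{u>0\}$; transferring this to the \emph{function} $u_0$ and proving that every half-plane solution of the degenerate one-phase problem is the linear profile $\alpha\langle\cdot,\nu\rangle^{+}$ is where the real work lies — it requires the $p$-homogeneity of $G(X_0,\cdot)$ together with a boundary-regularity and Liouville argument in the half-space, and it is exactly here that the heterogeneity of the original functional is frozen out, reducing the matter to the model situation treated in \cite{AC} and \cite{DP}.
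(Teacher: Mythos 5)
Your plan tracks the paper closely up to the point where the blow-up limit $u_0$ has to be shown to be a one-dimensional profile, but there the routes diverge in a genuine way. You and the paper agree on: (i) extracting a blow-up limit $u_0$ that minimizes the frozen functional $\mathfrak{F}_0$ (Lemma~\ref{le 5.3}), is $\mathcal{A}(X_0,\cdot)$-harmonic in its positivity set (Remark~\ref{r 3}), Lipschitz, and nondegenerate; (ii) identifying $\{u_0>0\}$ with a half-space $H$ using the definition of $\redbdry\{u>0\}$ together with assertion (\ref{4}) and nondegeneracy; and (iii) reading off the slope $\alpha(X_0)$ from the measure-theoretic free boundary condition of Theorem~\ref{t 5.2}, using $(p-1)$-homogeneity of $\mathcal{A}(X_0,\cdot)$ and the distributional identity $\div(V\chi_H)=(V\cdot\nu)\,\mathcal{H}^{n-1}\lfloor\partial H$. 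That computation is correct.

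The difference is in the classification. You argue that tangential derivatives of $u_0$ satisfy the linearized equation, are bounded and vanish on $\partial H$, hence vanish identically by a Liouville argument, forcing $u_0$ to be one-dimensional. This is the Alt--Caffarelli route and works cleanly for $p=2$, but for $p\neq2$ the linearized operator $\div(D_\xi^2 G(X_0,\nabla u_0)\nabla\cdot)$ has ellipticity proportional to $|\nabla u_0|^{p-2}$, and without a uniform lower bound on $|\nabla u_0|$ throughout $H$ (which is not obviously available just from Lipschitz continuity and nondegeneracy of $u_0$) the Liouville step does not go through as stated; citing \cite{DP} does not by itself fill this in, and you do not indicate how the degeneracy is handled. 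The paper avoids this entirely: it first extracts the pointwise Neumann condition $\nabla u_0\cdot\nu(X_0)=\alpha(X_0)$ on $\partial H$ from the measure identity, then performs an odd reflection of $u_0$ across $\partial H$ to produce a globally defined $\mathcal{A}(X_0,\cdot)$-harmonic Lipschitz function $v_0$, and applies the rigidity result of Kilpel\"ainen--Shahgholian--Zhong \cite{KSZ} together with $C^{1,\beta}$ interior regularity to conclude $v_0$ is affine. That circumvents the linearization and is the more robust argument in the degenerate setting. Your proof is thus not wrong in outline, but at its crux it substitutes a classification step that requires additional work in the range $p\neq2$; to make it rigorous you would need either to establish a positive lower gradient bound for $u_0$ away from $\partial H$ (so the linearized equation is uniformly elliptic), or to adopt the reflection--plus--\cite{KSZ} device as in the paper.
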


\begin{proof}
Enhancing the notation used in  Remark \ref{r 3} we have, from standard geometric-measures arguments together with nondegeneracy and assertions (\ref{1})--(\ref{5}),
\begin{eqnarray*}
u_{0} \equiv 0 \ \mbox{in} \ \left\lbrace X \in \mathbb{R}^{n}: \langle X, \nu \left( X_{0}\right)\rangle < 0 \right\rbrace \ \ \mbox{and} \ \ \left\lbrace u_{0} > 0 \right\rbrace = \left\lbrace X \in \mathbb{R}^{n}: \langle X, \nu \left( X_{0}\right)\rangle < 0 \right\rbrace. 
\end{eqnarray*}
Also we have
\begin{eqnarray}
\div\left( \mathcal{A}\left( X_{0}, \nabla u_{0} \right) \right)= 0 \ \mbox{in} \  \lbrace u_{0} > 0 \rbrace.
\end{eqnarray}
Since $\partial \left\lbrace u_{0} > 0 \right\rbrace$ is the smooth surface $\left\lbrace X \in \mathbb{R}^{n}: \langle X, \nu \left( X_{0}\right)\rangle = 0 \right\rbrace$ we obtain
\begin{eqnarray}
\partial \left\lbrace u_{0} > 0 \right\rbrace = \redbdry \{u_{0} > 0 \}.
\end{eqnarray}
By Lemma \ref{le 5.3} and the Theorem \ref{t 5.2} ($f=0$) we find
\begin{eqnarray}
\div\left( \mathcal{A}\left( X_{0}, \nabla u_{0} \right) \right) = Q\left( X_{0}\right) \lfloor \left\lbrace X \in \mathbb{R}^{n}: \langle X, \nu \left( X_{0}\right)\rangle = 0 \right\rbrace.
\end{eqnarray}
Hence, we reach the following conclusion
\begin{eqnarray}
\label{th 5.3.2}
\nabla u_{0}\left( X\right) \cdot \nu\left( X_{0}\right) = \alpha \left( X_{0}\right), \ \ \forall X \in \left\lbrace  \langle X, \nu \left( X_{0}\right)\rangle = 0 \right\rbrace.
\end{eqnarray}
Define the function $v_{0}$ by
\begin{eqnarray}
\label{t 5.3.1}
 v_{0}\left( X \right) = \left \{
\begin{array}{lll}
 u_{0}\left( X \right), &\mbox{if} &  X \in \left\lbrace \langle X, \nu \left( X_{0}\right)\rangle < 0 \right\rbrace  \\
 - u_{0}\left( X^{*} \right), &\mbox{if} &  X \in \left\lbrace \langle X, \nu \left( X_{0}\right)\rangle \geq 0 \right\rbrace, \\    
\end{array}
\right.
\end{eqnarray}
where $X^{*}$ is the reflation of $X$ with respect to the hiperplane $\left\lbrace \langle X, \nu \left( X_{0}\right)\rangle = 0 \right\rbrace$. \\
Using standard arguments we verify that $v_{0}$ is Lipschitz continuous in $\mathbb{R}^{n}$ ($u_{0}$ is Lipschitz continuous - Lemma \ref{le 5.3} and Theorem \ref{t 3.1} with $f=0$) and 
\begin{eqnarray}
\div\left( \mathcal{A}\left( X_{0}, \nabla v_{0} \right) \right)= 0 \ \mbox{in} \  \mathbb{R}^{n}.
\end{eqnarray}
By $C^{1, \beta}$ regularity of $v_{0}$, we can to apply the blow-up argument from \cite{KSZ} to conclude that $v_{0}$ is an affine function. Then, using (\ref{th 5.3.2}) we find
\begin{eqnarray}
 u_{0} (X) = \alpha(X_0) \left  \langle X - X_0, \nu(X_0)
    \right \rangle^{+}
\end{eqnarray}
and the result is proved.     
\end{proof}

\section{Jet flow problems and smoothness of the free boundary}

In this section we address the question of smoothness of the free boundary. Per primary motivations that come from heterogeneous jet flow theory, in this section we shall only treat non-degenerate problem, i.e.,  we will work under the following assumptions:
\begin{eqnarray}\label{jet flow}
	F\left( X, \xi \right) =  \dfrac{1}{2} A(X)|\xi|^2  + f(X) (u^{+})^m + Q \chi_{\{u > 0 \}},
\end{eqnarray}
for $X \in \Omega$ and $\xi \in \mathbb{R}^{n}$, $1\le m < 2$, $f \in C\left( \Omega \right)$ and $Q \in C^{0,\beta}$, $0 < \epsilon < Q < \epsilon^{-1}$. The matrix $A$ is assume to be Lipschitz and positive definite. 

\par

The proof we will present for smoothness of the reduced free boundary  is based on flatness improvement coming from Harnack type estimates and it follows closely the recent work of \cite{DeSilva}. There are few subtle differences though. For instance the equation we work on is naturally in divergence form, thus it presents drift terms in non-divergence form. Also the free boundary condition obtained in \eqref{FBC - red} is a bit more involved then the one treated in \cite{DeSilva}. For sake of completeness and readers' convenience, we shall carry out all the details. 

\par

We shall use Caffarelli's viscosity solution setting to access the free boundary regularity theory.  Let us recall some terminologies.  Let $u, \phi \in C\left( \Omega \right)$. If $u\left( X_{0}\right) = \phi\left( X_{0} \right)$ and  there exists a neighborhood $V$ of $X_{0}$ such that
\begin{eqnarray}
\label{d 5.1}
u\left( X \right) \geq \phi\left( X \right) \ \ (\mbox{resp.} \ u\left( X \right) \leq \phi\left( X \right)) \ \ \mbox{in} \ \ V,
\end{eqnarray} 
we say that $\phi$ touches $u$ by below (resp. above) at $X_{0} \in \Omega$. Moreover, if inequality in (\ref{d 5.1}) is strict in $V \setminus \left\lbrace X_{0} \right\rbrace$, we say that $\phi$ touches $u$ strictly by below (resp. above) at $X_{0} \in \Omega$.

Next Proposition is classical in the theory of Caffarelli's viscosity solution, see \cite{C1, C2, C3}; therefore we omit its proof.

\begin{proposition} \label{prop min satisfies visc}
Assume \eqref{jet flow}. A minimizer $u$ to (\ref{Funct Intro}) is a viscosity solution to 

\begin{eqnarray}
\label{P 5.1}
	\left \{ 
		\begin{array}{lll}
			\div  \left ( A\left( X \right) \nabla u \right )  =  mf(X)u^{m-1} , & \text{ in } & \Omega_{+}\left( u \right):= \{ u > 0 \} \\
			\langle A \nabla u, \nabla u \rangle= Q & \text{ on } & \mathcal{F}(u) := \partial \{u > 0 \} \cap \Omega. 
		\end{array}
	\right.
\end{eqnarray}
The free boundary condition above is understood in the Caffarelli's viscosity sense: if $\phi \in C^{2}\left( \Omega \right)$ and $\phi^{+}$ touches $u$ by below (resp. above) at $X_{0} \in \mathcal{F}\left( u \right)$ with $\vert\nabla \phi \vert\left( X_{0} \right) \neq 0$ then
\begin{eqnarray}
\label{d 5.2.2}
\langle A\left( X_{0} \right)\nabla \phi \left( X_{0} \right), \nabla \phi \left( X_{0} \right)\rangle \leq Q (X_0) \ \ \ (\mbox{resp.} \geq Q(X_0)). 
\end{eqnarray} 
\end{proposition}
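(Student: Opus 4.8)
The plan is to verify separately the interior equation and the free boundary condition, both in the viscosity sense. The interior statement is immediate: by Theorem \ref{t 2.3} the minimizer $u$ solves $\div(A(X)\nabla u) = mf(X)u^{m-1}$ in the distributional sense inside the open set $\{u>0\}$, and since the coefficients are Lipschitz and $f$ is continuous, classical elliptic regularity upgrades $u$ to $C^{1,\gamma}_{\loc}$ there, whence the distributional solution is also a viscosity solution of the same equation on $\Omega_+(u)$. So the only real content is the free boundary condition \eqref{d 5.2.2}.

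For the free boundary condition I would argue by contradiction using the geometric estimates already established. Suppose $\phi \in C^2(\Omega)$ with $\phi^+$ touching $u$ strictly from below at $X_0 \in \mathcal F(u)$ and $|\nabla\phi|(X_0) \neq 0$; after a rotation we may assume $\nabla\phi(X_0) = |\nabla\phi(X_0)|\,e_n$, so near $X_0$ the set $\{\phi > 0\}$ is, to first order, a half-space $\{\langle X - X_0, e_n\rangle > 0\}$ and $\phi(X) = |\nabla\phi(X_0)|\langle X-X_0,e_n\rangle^+ + o(|X-X_0|)$. Perform the blow-up $u_{r}(Y) = r^{-1}u(X_0 + rY)$; by Lemma \ref{le 5.3} any blow-up limit $u_0$ is a minimizer of $\mathfrak F_0$ with kernel $G(X_0,\cdot) + Q(X_0)\chi_{\{\cdot > 0\}}$, and from the touching condition together with uniform nondegeneracy (Theorem \ref{t 3.3}) and the positive density estimates (Theorem \ref{t 4.1}) the limit satisfies $\{u_0 > 0\} = \{\langle Y, e_n\rangle > 0\}$ with $u_0$ being $\mathcal A(X_0,\cdot)$-harmonic there; then exactly as in Theorem \ref{FBC pointwise sense}, reflecting across the hyperplane and invoking the affine classification, $u_0(Y) = \alpha(X_0)\langle Y, e_n\rangle^+$ with $\alpha(X_0)$ given by \eqref{FBC - red}, i.e. $\langle A(X_0)\nabla u_0, \nabla u_0\rangle = Q(X_0)$ on the flat free boundary. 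Since $\phi^+$ touches $u$ from below, after blow-up the gradient comparison gives $|\nabla\phi(X_0)| \le \alpha(X_0)$ (the touching plane cannot be steeper than the linear profile of $u_0$ in the inner normal direction), and squaring against $A(X_0)$ yields $\langle A(X_0)\nabla\phi(X_0), \nabla\phi(X_0)\rangle \le \langle A(X_0)\nabla u_0, \nabla u_0\rangle = Q(X_0)$. The "above" case is symmetric, reversing the inequality. To pass from strict touching to the general touching hypothesis one perturbs $\phi$ slightly (e.g. replace $\phi$ by $\phi - \delta|X-X_0|^2$, let $\delta \to 0$, using continuity of $Q$ and of the left side in $\phi$).

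The main obstacle is the blow-up comparison step: one must be careful that the half-space geometry of $\{u_0 > 0\}$ is genuinely forced — this uses that $\phi^+$ lies below $u$ (so $\{u_0 > 0\}$ contains the half-space $\{\langle Y,e_n\rangle > 0\}$) combined with nondegeneracy of $u$ from the other side (so $\{u_0 = 0\}$ has positive density in the complementary half-space), and that these two facts pin down the free boundary of $u_0$ as the hyperplane rather than something larger or with lower-dimensional pathologies. Here the locally finite perimeter and $\mathcal H^{n-1}$ estimates of Theorem \ref{t 4.2}, together with the classification in Theorem \ref{FBC pointwise sense}, do the work. The remaining bookkeeping — justifying the $C^{1,\gamma}$ convergence of $u_r$ to $u_0$ up to the flat part of the boundary so that the gradients actually converge, and tracking the dependence $\alpha(X_0)^{p-1}\mathcal A(X_0,\nu)\cdot\nu = Q(X_0)$ through the anisotropic reflection — is routine given the tools assembled in the previous sections, and in this quadratic-kernel case ($p=2$) reduces to $\langle A(X_0)\nu,\nu\rangle\,\alpha(X_0)^2 = Q(X_0)$, matching \eqref{d 5.2.2}. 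Since this Proposition is classical in Caffarelli's framework, the reference to \cite{C1, C2, C3} legitimately allows one to omit the full details.
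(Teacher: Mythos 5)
The paper does not actually prove Proposition \ref{prop min satisfies visc}: it states that the result is ``classical in the theory of Caffarelli's viscosity solution'' and refers to \cite{C1, C2, C3}, omitting the argument. So there is no in-paper proof to compare against, and your task is really to supply the missing argument from the tools the paper does develop. Your blueprint is the right one in outline---the interior part via Theorem~\ref{t 2.3} plus elliptic regularity is correct, the blow-up strategy is the natural route, and the final arithmetic from $|\nabla\phi(X_0)|\le\alpha(X_0)$ to the inequality \eqref{d 5.2.2} is correct once one accepts that the blow-up of $u$ at $X_0$ is the one-plane solution.

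The genuine gap is exactly at the step you flag as ``the main obstacle,'' and I do not think the tools you invoke actually close it. Theorem~\ref{FBC pointwise sense} classifies the blow-up only at points of the \emph{reduced} free boundary $\redbdry\{u>0\}$, whereas the touching point $X_0$ in the viscosity definition is an arbitrary point of $\partial\{u>0\}$; the exceptional set $\partial\{u>0\}\setminus\redbdry\{u>0\}$ is $\mathcal H^{n-1}$-null by Theorem~\ref{t 4.2} but need not be empty, and nothing in the hypotheses forces $X_0$ to lie in $\redbdry\{u>0\}$. The facts you list---$\{u_0>0\}\supseteq H^+$ from the touching, positive density of $\{u_0=0\}$ from Theorem~\ref{t 4.1}, finite perimeter from Theorem~\ref{t 4.2}---are necessary ingredients, but by themselves they do not ``pin down the free boundary of $u_0$ as the hyperplane.'' One still needs a Liouville-type step: a minimizer of the homogeneous blow-up functional $\mathfrak F_0$ whose positivity set contains a half-space and whose free boundary passes through the origin is a one-plane solution. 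This is true and classical (it is the content of the Alt--Caffarelli ``smooth contact from the positivity side'' lemma, proved via the interior tangent ball $\{\phi>0\}\subset\{u>0\}$, boundary $C^{1,\alpha}$ estimates and a Hopf-type argument inside the ball, together with the non-negative measure $\Lambda$ and its blow-up), but it is not contained in, nor an immediate corollary of, Theorems~\ref{t 4.1}, \ref{t 4.2} or~\ref{FBC pointwise sense}. As written, your proof cites these results as if they imply the half-plane blow-up; they do not, and a short additional lemma is required. Once that lemma is in place, the rest of your argument (identifying $\nu_\phi$ with the normal, the slope comparison $|\nabla\phi(X_0)|\le\alpha(X_0)$, the symmetric ``above'' case, and the limiting argument from strict to non-strict touching) is sound.
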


\medskip

The free boundary regularity  result  we will prove is this Section is the following:

\begin{theorem} 
\label{t 5.1}
Let $u$ be a viscosity solution to (\ref{P 5.1}) in ball $B_{1}\left( 0\right)$. Suppose that $0 \in \mathcal{F}\left( u \right) $, $Q\left( 0 \right)= 1$ and $a_{ij}\left( 0 \right) = \delta_{ij}$. There exists a universal constant $\tilde{\varepsilon} > 0$ such that, if the graph of $u$ is $\tilde{\varepsilon}$-flat in $B_{1}\left( 0\right)$, i.e.  
\begin{eqnarray}
\label{t 5.1.1}
\left( X_{n} - \tilde{\varepsilon}\right)^{+} \leq u \left( X \right) \leq \left( X_{n} + \tilde{\varepsilon} \right)^{+} \ \ \mbox{for} \ \ X \in B_{1}\left( 0 \right),
\end{eqnarray} 
and
\begin{eqnarray}
\label{t 5.1.2}
\left[ a_{ij}\right]_{C^{0,1}\left( B_{1}\left( 0\right) \right) } \leq \tilde{\varepsilon}, \ \ \Vert f \Vert_{L^{\infty}\left( B_{1}\left( 0\right) \right) } \leq \tilde{\varepsilon}, \ \ \left[ Q \right]_{C^{0,\beta}\left( B_{1}\left( 0\right) \right)} \leq \tilde{\varepsilon},
\end{eqnarray}
then $F\left( u \right)$ is $C^{1,\gamma}$ in $B_{\frac{1}{2}}\left( 0 \right)$.
\end{theorem}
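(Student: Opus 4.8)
The plan is to follow the flatness-improvement scheme of De Silva, adapted to the divergence-form operator $\div(A(X)\nabla u)$ with the lower-order term $mf(X)u^{m-1}$ and the free boundary condition $\langle A\nabla u,\nabla u\rangle = Q$. The backbone is a single iterative lemma: there exist universal constants $\tilde\varepsilon_0>0$ and $0<\rho<1$ such that if $u$ is a viscosity solution satisfying the normalizations $0\in\mathcal F(u)$, $Q(0)=1$, $a_{ij}(0)=\delta_{ij}$, the smallness bounds \eqref{t 5.1.2}, and the flatness $(x_n-\tilde\varepsilon)^+\le u\le (x_n+\tilde\varepsilon)^+$ in $B_1$ for some $\tilde\varepsilon\le\tilde\varepsilon_0$, then there is a unit vector $\nu$ with $|\nu-e_n|\le C\tilde\varepsilon$ such that, in the rescaled/rotated ball, $u$ is $\tfrac12\rho\,\tilde\varepsilon$-flat in direction $\nu$ in $B_\rho$. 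Iterating this lemma at every free boundary point in $B_{1/2}$ (after checking the normalizations are preserved by the natural rescaling $u_\rho(X)=u(\rho X)/\rho$, which rescales the coefficient seminorms in one's favor since $[a_{ij}]_{C^{0,1}}$ picks up a factor $\rho$ and $[Q]_{C^{0,\beta}}$ a factor $\rho^\beta$) yields a geometric decay of the flatness, hence that $\mathcal F(u)$ has a tangent plane at each such point with a Hölder modulus, i.e. $\mathcal F(u)\cap B_{1/2}$ is a $C^{1,\gamma}$ graph.

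First I would prove the two ingredients feeding the iterative lemma. (i) \emph{Harnack inequality for flatness}: if $(x_n+a_0)^+\le u\le (x_n+b_0)^+$ in $B_r(\bar X)\subset B_1$ with $b_0-a_0\le\varepsilon r$ small, then the oscillation of the "defect" $u-(x_n+a_0)$ improves by a fixed factor on $B_{\eta r}$; this is proved by barrier arguments using that in $\{u>0\}$ the function $u$ solves $\div(A\nabla u)=mf u^{m-1}$, a linear uniformly elliptic equation with a bounded (since $m\ge1$, $u$ bounded) right-hand side controlled by $\|f\|_\infty\le\tilde\varepsilon$, so the usual interior Harnack applies, while the free-boundary condition controls the behavior across $\mathcal F(u)$. (ii) \emph{Linearization / compactness}: rescale the defect $\tilde u_\varepsilon := (u-x_n)/\varepsilon$ on $\{u>0\}$; using the Harnack step one shows the graphs of $\tilde u_\varepsilon$ have uniform Hölder estimates (a partial Harnack / Hölder estimate à la De Silva), so along a subsequence $\tilde u_\varepsilon\to\tilde u$ locally uniformly, and $\tilde u$ solves a limiting linear problem: $\div(A(0)\nabla\tilde u)=0$, i.e. $\Delta\tilde u=0$ in $B_{1/2}\cap\{x_n>0\}$ with the Neumann-type condition coming from linearizing $\langle A\nabla u,\nabla u\rangle=Q$ at the profile $u\approx x_n$, namely $\partial_n\tilde u=\tfrac12\,\partial_n Q$-type term which under our normalization $Q(0)=1$, $[Q]_{C^{0,\beta}}\le\tilde\varepsilon$ reduces to $\tilde u_n=0$ on $\{x_n=0\}$. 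The key algebraic point is to check, via \eqref{FBC - red}, that the correct linearized coefficient in front of $\tilde u_n$ is $1$ after the normalizations $a_{ij}(0)=\delta_{ij}$, $Q(0)=1$ (so $\alpha(0)=1$); this is where "the free boundary condition is a bit more involved than in \cite{DeSilva}" enters. Regularity of harmonic functions then gives $|\tilde u(X)-(\tilde u(0)+X'\cdot\nabla_{x'}\tilde u(0))|\le C|X|^2$, which transfers back (for $\varepsilon$ small, by the compactness contradiction argument) to the improved flatness of $u$ in the rotated direction $\nu = (\,-\varepsilon\nabla_{x'}\tilde u(0),\,1\,)/|\cdot|$.

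Finally I would assemble the iteration: starting from \eqref{t 5.1.1}–\eqref{t 5.1.2} with $\tilde\varepsilon$ chosen below $\tilde\varepsilon_0$ and small enough that the rescaled problems stay in the regime of the iterative lemma at every scale (here one uses that the coefficient seminorms contract under rescaling, so \eqref{t 5.1.2} is self-improving), one obtains unit vectors $\nu_k$ and flatness $\varepsilon_k\le 2^{-k}\tilde\varepsilon$ at dyadic scales $\rho^k$ around $0$, with $|\nu_{k+1}-\nu_k|\le C\varepsilon_k$; hence $\nu_k\to\nu_\infty$ with $|\nu_k-\nu_\infty|\le C\rho^{k\gamma}$, so $\mathcal F(u)$ is differentiable at $0$ with a Hölder-continuous normal. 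Since $0$ was an arbitrary normalized free boundary point — and by Theorem \ref{t 5.2} and Theorem \ref{FBC pointwise sense} every point of $\redbdry\{u>0\}$ can be brought to this normalized flat configuration at small scales — the same conclusion holds at each such point with uniform constants, giving the $C^{1,\gamma}$ regularity of $\mathcal F(u)$ in $B_{1/2}(0)$.

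I expect the main obstacle to be the linearization step (ii): correctly identifying the limiting free boundary condition from $\langle A\nabla u,\nabla u\rangle=Q$ — i.e. showing the quadratic, matrix-weighted Bernoulli condition linearizes, after the normalizations, to the plain Neumann condition $\tilde u_n=0$ — and simultaneously handling the non-divergence drift terms produced by differentiating $A(X)$, which are $O(\tilde\varepsilon)$ by \eqref{t 5.1.2} but must be shown not to survive in the limit. The lower-order term $mf u^{m-1}$ is comparatively harmless: it is bounded by $\tilde\varepsilon$ times a constant depending on $\|u\|_\infty$, and after the $\varepsilon^{-1}$ rescaling it contributes $O(\tilde\varepsilon/\varepsilon)=O(1)\cdot$(something that vanishes once we also send the scale to zero), so it drops out in the compactness limit exactly as the analogous term does in the interior estimates.
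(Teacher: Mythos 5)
Your proposal follows essentially the same route as the paper: a Harnack-type flatness lemma (the paper's Lemma~\ref{l 5.2} and Theorem~\ref{t 5.2.1}), a compactness/partial-H\"older estimate for the normalized defect $\tilde u_\varepsilon = (u - x_n)/\varepsilon$ (Corollary~\ref{c 5.1}), a flatness-improvement step in which the rescaled defects converge to a solution of a linearized Neumann problem $\Delta\tilde u = 0$, $\partial_n\tilde u = 0$ (Theorem~\ref{t 5.3}), and a dyadic iteration exploiting that $[a_{ij}]_{C^{0,1}}$ and $[Q]_{C^{0,\beta}}$ contract under the rescaling $u_\rho(X) = u(\rho X)/\rho$. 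One small bookkeeping remark: the paper normalizes so that $\|mf u^{m-1}\|_\infty \le C_1\varepsilon^2$ (quadratic in $\varepsilon$) rather than merely $O(\tilde\varepsilon)$, which is what makes the lower-order term drop cleanly after the $\varepsilon^{-1}$ rescaling, but your conclusion that it vanishes in the limit is still right.
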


\begin{corollary} Assume \eqref{jet flow}. The reduced free boundary of a minimizer $u$ to (\ref{Funct Intro}) is locally a $C^{1,\gamma}$ surface. In particular, 
$$
	\langle A(Z) \nabla u(Z), \nabla u(Z) \rangle = Q(Z),
$$
in the classical sense for $\mathcal{H}^{n-1}$ almost all free boundary points $Z \in \partial \{u > 0 \}$.
\end{corollary}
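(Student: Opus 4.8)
The plan is to prove the ``flatness implies $C^{1,\gamma}$'' statement via a compactness/improvement-of-flatness scheme in the spirit of De Silva, adapted to the divergence-form operator and the variable free boundary condition $\langle A\nabla u,\nabla u\rangle = Q$. The core of the argument is an iterative lemma: there exist universal $\tilde\varepsilon>0$ and $0<\rho<1$ so that if $u$ is a viscosity solution with $0\in\mathcal F(u)$, $Q(0)=1$, $a_{ij}(0)=\delta_{ij}$, and $u$ is $\varepsilon$-flat in $B_1$ in direction $e_n$ for some $\varepsilon\le\tilde\varepsilon$, together with the smallness \eqref{t 5.1.2} suitably rescaled, then in $B_\rho$ the function $u$ is $\tfrac{\rho\varepsilon}{2}$-flat in some new unit direction $\nu$ with $|\nu-e_n|\le C\varepsilon$. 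Iterating this on dyadic scales $\rho^k$ yields a geometrically decaying flatness together with a Cauchy sequence of normal directions $\nu_k$; the limiting direction defines the normal to $\mathcal F(u)$ at $0$, and the modulus of convergence is Hölder, giving $\mathcal F(u)\in C^{1,\gamma}$ near $0$ (and hence in $B_{1/2}$ after covering, since every free boundary point inherits flatness of the same order by the estimates of Section~\ref{S Hausdorff Est}).

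To prove the improvement-of-flatness step I would first establish a Harnack-type inequality for flatness: if $u$ is trapped between two parallel translates of $(x_n)^+$ at distance $\varepsilon$ in $B_1$, then in $B_{1/2}$ it is trapped at distance $(1-c)\varepsilon$ unless the trapping planes are already very close, the point being that the ``interior'' deviation $w:=(u-x_n)/\varepsilon$ behaves like a solution of a uniformly elliptic equation (with drift coming from writing $\mathrm{div}(A\nabla u)$ in non-divergence form) to which the classical Harnack inequality applies in $\Omega_+(u)$, while the free boundary condition controls $w$ up to $\mathcal F(u)$. The quantitative input is exactly Proposition \ref{prop min satisfies visc}: test functions of the form $(x_n + \varepsilon\phi)^+$ touching $u$ force $\langle A\nabla\phi,\nabla\phi\rangle \lessgtr Q$, which after linearization pins down the Neumann-type boundary data for $w$. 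Then a compactness argument: if the improvement failed, one would obtain a sequence $u_k$, $\varepsilon_k\to 0$, with $w_k = (u_k - x_n)/\varepsilon_k$ converging (by the Harnack estimate, along a subsequence and uniformly on compacts of $B_{1/2}\cap\{x_n\ge 0\}$) to a limit $w_\infty$ that solves a constant-coefficient linearized problem — the Laplace equation in $\{x_n>0\}$ with a homogeneous Neumann condition $\partial_n w_\infty = 0$ on $\{x_n=0\}$ (the $f$-term and the $Q$-oscillation and the coefficient oscillation all vanish in the limit by \eqref{t 5.1.2}). Regularity for this limiting problem gives $w_\infty$ close to a linear function $\langle x',\tau\rangle$ in $B_\rho$, contradicting the assumed lack of improvement for $k$ large.

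Several routine-but-necessary ingredients must be assembled along the way: one needs that viscosity solutions satisfying \eqref{t 5.1.1} have the free boundary $\mathcal F(u)$ trapped in a thin strip $\{|x_n|\le\tilde\varepsilon\}$ and that $u$ is nondegenerate and Lipschitz up to $\mathcal F(u)$ (both available from Section \ref{S Lip and ND}), and one needs the rescalings $u_\rho(X) := u(\rho X)/\rho$ to again be viscosity solutions of a problem of the same type, with $A_\rho(X)=A(\rho X)$, $f_\rho(X)=\rho^m f(\rho X)$, $Q_\rho(X)=Q(\rho X)$ — crucially $\|f_\rho\|_\infty$ and $[a_{ij}]_{C^{0,1}}$, $[Q]_{C^{0,\beta}}$ all shrink under rescaling, which is what makes the iteration close and forces the limiting problem to be the clean constant-coefficient one. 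For the corollary, one invokes Theorem \ref{FBC pointwise sense}: at every reduced free boundary point $X_0$ the blow-up is a half-plane solution $\alpha(X_0)\langle X-X_0,\nu(X_0)\rangle^+$, which means precisely that at small scales $u$ becomes $\varepsilon$-flat in the sense of \eqref{t 5.1.1} after the affine normalization $Q(X_0)=1$, $a_{ij}(X_0)=\delta_{ij}$ (accomplished by an affine change of variables that preserves the structure, rescaling by $\alpha(X_0)$); Theorem \ref{t 5.1} then upgrades this to $C^{1,\gamma}$ regularity of $\mathcal F(u)$ in a neighborhood of $X_0$, and since by Theorem \ref{t 4.2} the reduced free boundary has full $\mathcal H^{n-1}$ measure in $\partial\{u>0\}$, the classical free boundary condition $\langle A\nabla u,\nabla u\rangle = Q$ holds $\mathcal H^{n-1}$-a.e.

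\medskip

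I expect the main obstacle to be the linearization of the free boundary condition and the identification of the correct limiting Neumann problem: because the condition is $\langle A\nabla u,\nabla u\rangle=Q$ with variable $A$ and variable $Q$ rather than $|\nabla u|=1$, the touching-test-function computation in Proposition \ref{prop min satisfies visc} produces a boundary condition for $w$ of the form $\,2\,\nabla u_\infty \cdot A(0)\nabla w_\infty = \,$(derivative of $Q$ along the free boundary), and one must check carefully — using $Q(0)=1$, $a_{ij}(0)=\delta_{ij}$, and the decay of $[Q]_{C^{0,\beta}}$, $[a_{ij}]_{C^{0,1}}$ under rescaling — that the normalization makes all the inhomogeneous terms negligible at the limit so the limiting condition is genuinely $\partial_n w_\infty=0$. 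The drift term arising from expanding $\mathrm{div}(A\nabla u)=a_{ij}\partial_{ij}u + (\partial_i a_{ij})\partial_j u$ is the other delicate point, but since $\partial_i a_{ij}$ is merely bounded (and its size is not reduced by rescaling unless one is careful), one handles it by absorbing it into a uniformly elliptic operator with bounded measurable drift and invoking the Krylov–Safonov Harnack inequality rather than the classical one — this is enough for the compactness step because the drift term, multiplied by $\varepsilon_k^{-1}\cdot\varepsilon_k$ after normalizing $w_k$, still goes to zero in the limit.
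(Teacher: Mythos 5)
Your proposal is correct and follows essentially the same route as the paper: the bulk of your argument re-derives Theorem~\ref{t 5.1} via a Harnack-type inequality and compactness/improvement-of-flatness (which the paper does in Lemma~\ref{l 5.2} through Theorem~\ref{t 5.3}), while the Corollary itself is obtained exactly as you describe — Theorem~\ref{FBC pointwise sense} furnishes flatness at small scales at reduced boundary points after affine normalization, Theorem~\ref{t 5.1} upgrades this to $C^{1,\gamma}$, and Theorem~\ref{t 4.2} gives that $\partial_{\mathrm{red}}\{u>0\}$ has full $\mathcal{H}^{n-1}$ measure. Only one small point: the appeal to Krylov--Safonov is unnecessary, since the drift $b_i=\partial_k a_{ik}$ becomes $O(\varepsilon^2)$ after the paper's rescaling (the Lipschitz seminorm of $a_{ij}$ is multiplied by $\rho_j$, see the estimates after Theorem~\ref{t 5.3}), so the classical Harnack inequality already suffices.
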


The approach will be fundamentally based on comparison criterion.

\begin{definition}
\label{d 5.3}
Let $v \in C^{2}\left( \Omega \right)$. Fixed a viscosity solution $u$ to  (\ref{P 5.1}), we say  $v$ is a strict (comparison) subsolution (resp. supersolution) to (\ref{P 5.1}) in $\Omega$, if  the following teo conditions are satisfied:
\begin{enumerate}
\item $ \div  \left ( A\left( X \right) \nabla v \right )  >  mf(X)(u^{+})^{m-1}$ (resp. $<$) in $\Omega_{+}\left( v \right)$; 
\item If $X_{0} \in \mathcal{F}\left( u \right)$ then
\begin{eqnarray*}
\label{}
\langle A\left( X_{0} \right)\nabla \phi \left( X_{0} \right), \nabla \phi \left( X_{0} \right)\rangle > Q (X_0) \ \ \ \left( \mbox{resp.} \ 0 < \langle A\left( X_{0} \right)\nabla \phi \left( X_{0} \right), \nabla \phi \left( X_{0} \right)\rangle <  Q (X_0)\right). 
\end{eqnarray*} 
\end{enumerate}
\end{definition}

Next lemma provides a basic comparison principle for solutions to the free boundary problem  (\ref{P 5.1}).
\begin{lemma} 
\label{l 5.1}
Let $u$ a viscosity solution to (\ref{P 5.1}) in $\Omega$. If $v$ is a strict subsolution to (\ref{P 5.1}) in $\Omega$ such that $u \geq v^{+}$ in $\Omega$. Then in  $\Omega^{+}\left( v \right) \cup \mathcal{F}\left( v \right)$ the strict inequality, $u > v^{+}$, holds.
\end{lemma}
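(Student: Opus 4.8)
The plan is to argue by contradiction. Suppose the conclusion fails, so that there is a point $X_{0}\in\Omega^{+}(v)\cup\mathcal{F}(v)$ with $u(X_{0})=v^{+}(X_{0})$; since $u\ge v^{+}$ everywhere by hypothesis, this means $v^{+}$ touches $u$ by below at $X_{0}$. The idea is to show that the mere existence of one such touching point contradicts either the interior equation or the free boundary condition that $u$ satisfies in the Caffarelli viscosity sense (Proposition~\ref{prop min satisfies visc}). Because a single touching point already yields a contradiction, there is no need for a ``first touching point'' or sliding argument. I would split into two cases according to whether $X_{0}$ lies in $\{v>0\}$ or on $\mathcal{F}(v)$.

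Case $v(X_{0})>0$. Then $u(X_{0})\ge v(X_{0})>0$, so $X_{0}$ is an interior point of the open set $\{u>0\}$, and since $v$ is continuous with $v(X_{0})>0$ we have $v^{+}=v$ in a neighborhood of $X_{0}$; hence the $C^{2}$ function $v$ itself touches $u$ by below at $X_{0}$. As $u$ is a viscosity supersolution of $\div(A\nabla u)=mf\,u^{m-1}$ in $\{u>0\}$, testing with $v$ gives $\div\!\big(A(X_{0})\nabla v(X_{0})\big)\le mf(X_{0})u(X_{0})^{m-1}$. But condition $(1)$ in Definition~\ref{d 5.3} evaluated at $X_{0}$ reads $\div\!\big(A(X_{0})\nabla v(X_{0})\big)>mf(X_{0})(u(X_{0})^{+})^{m-1}=mf(X_{0})u(X_{0})^{m-1}$, a contradiction.

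Case $v(X_{0})=0$, i.e. $X_{0}\in\mathcal{F}(v)$. First I would check that $X_{0}\in\mathcal{F}(u)$: from $u(X_{0})=v^{+}(X_{0})=0$ we get $X_{0}\notin\{u>0\}$, while picking $X_{k}\to X_{0}$ with $v(X_{k})>0$ gives $u(X_{k})\ge v(X_{k})>0$, so $X_{0}\in\overline{\{u>0\}}$; thus $X_{0}\in\partial\{u>0\}\cap\Omega=\mathcal{F}(u)$. Next, condition $(2)$ in Definition~\ref{d 5.3} (read with $v$ and $\mathcal{F}(v)$ in place of $\phi$ and $\mathcal{F}(u)$, which is the natural reading) forces $\langle A(X_{0})\nabla v(X_{0}),\nabla v(X_{0})\rangle>Q(X_{0})>0$, and since $A$ is positive definite this yields $\nabla v(X_{0})\neq 0$. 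Therefore $v\in C^{2}(\Omega)$, $|\nabla v|(X_{0})\neq 0$, and $v^{+}$ touches $u$ by below at the free boundary point $X_{0}\in\mathcal{F}(u)$; the viscosity free boundary inequality from Proposition~\ref{prop min satisfies visc} then gives $\langle A(X_{0})\nabla v(X_{0}),\nabla v(X_{0})\rangle\le Q(X_{0})$, contradicting the strict inequality just obtained. In both cases we reach a contradiction, so $u>v^{+}$ throughout $\Omega^{+}(v)\cup\mathcal{F}(v)$.

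Since everything reduces to unwinding the viscosity definitions, I do not expect a serious analytic obstacle. The two points that require care are: (a) staying strictly within the viscosity framework, because $u$ is only assumed to be a viscosity solution, so one cannot invoke the classical strong maximum principle or Hopf lemma directly for $u$ in Case $1$; and (b) verifying in Case $2$ that a touching point on $\mathcal{F}(v)$ is automatically a point of $\mathcal{F}(u)$ at which $\nabla v$ does not vanish, which is precisely what legitimizes applying the viscosity free boundary condition with test function $v$.
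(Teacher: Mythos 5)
Your proof is correct. The paper in fact gives no proof of Lemma~\ref{l 5.1} (it is treated as part of the standard Caffarelli/De~Silva viscosity-comparison toolkit, as stated after Proposition~\ref{prop min satisfies visc}), and your two-case argument --- testing the interior viscosity supersolution inequality with $v$ when $v(X_0)>0$, and testing the free boundary condition of Proposition~\ref{prop min satisfies visc} with $\phi=v$ when $X_0\in\mathcal{F}(v)$, having first verified $X_0\in\mathcal{F}(u)$ and $\nabla v(X_0)\neq 0$ --- is precisely the expected one; you are also right that no sliding or ``first touching point'' device is needed here, since $u\geq v^{+}$ is assumed globally and a single touching point already yields the contradiction. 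Your reading of condition~(2) in Definition~\ref{d 5.3}, replacing the stray $\phi$ by $v$ and $\mathcal{F}(u)$ by $\mathcal{F}(v)$, is the only sensible interpretation of what is evidently a typo, and in any case the argument goes through under either reading because you establish $X_0\in\mathcal{F}(u)\cap\mathcal{F}(v)$ at the touching point.
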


Lemma \ref{l 5.1} yields the crucial  tool in the proof of  Theorem \ref{t 5.1}. More precisely, based on comparison principle granted in Lemma \ref{l 5.1}, we prove  a Harnack inequality estimate for solution $u$.  For $0 < \varepsilon < 1$, to be chosen later,  we can assume, by normalization and dilating variables,  the following conditions:
\begin{eqnarray}
\label{t 5.2.a} \Vert a_{ij} - \delta_{ij} \Vert_{L^{\infty}\left(\Omega \right)} \leq \varepsilon^{2}, \\ 
\label{t 5.2.b} \Vert mf(X)u^{m-1} \Vert_{L^{\infty}\left( \Omega \right)} \leq C_{1}\varepsilon^{2}, \\ 
\label{t 5.2.c} \Vert D a_{ij} \Vert_{L^{\infty}\left( \Omega \right)} \leq C_{0} \varepsilon^{2}, \\ 
\label{t 5.2.d} \Vert Q - 1 \Vert_{L^{\infty}\left(\Omega \right)} \leq \varepsilon^{2},
\end{eqnarray}
for $C_0$ and $C_1$ universal, depending only on Lipschitz norm of $a_{ij}$ and bounds for $f$ in (g2). We need of following Lemma.
\begin{lemma}
\label{l 5.2}
Let $u$ a viscosity solution to (\ref{P 5.1}) in $\Omega$, under assumptions  (\ref{t 5.2.a})--(\ref{t 5.2.d}). There exists a universal constant $\tilde{\varepsilon} > 0$ such that if  $0 < \varepsilon \leq \tilde{\varepsilon}$ and $u$ satisfies
\begin{eqnarray}
\label{l 5.2.1}
p^{+}\left( X \right) \leq u \left( X \right) \leq \left( p\left( X \right) + \sigma \right)^{+}, \ \ \vert \sigma \vert < \frac{1}{20} \ \ \mbox{in} \ \ B_{1}\left( 0 \right), \ \ p\left( X \right) = X_{n} + \sigma, 
\end{eqnarray} 
then if at $X_{0}= \frac{1}{10}e_{n}$ 
\begin{eqnarray}
\label{l 5.2.2}
u \left( X_{0} \right) \geq \left( p\left( X_{0} \right) + \frac{\varepsilon}{2} \right)^{+}, 
\end{eqnarray} 
then
\begin{eqnarray}
\label{l 5.2.3}
u \geq \left( p + c\varepsilon \right)^{+} \ \ \mbox{in} \ \ \overline{B}_{\frac{1}{2}}\left( 0 \right), 
\end{eqnarray}
for some $0 < c < 1$. Analogously, if
\begin{eqnarray}
\label{l 5.2.4}
u \left( X_{0} \right) \leq \left( p\left( X_{0} \right) + \frac{\varepsilon}{2} \right)^{+}, 
\end{eqnarray} 
then
\begin{eqnarray}
\label{l 5.2.5}
u \leq \left( p + \left( 1 - c \right)\varepsilon \right)^{+} \ \ \mbox{in} \ \ \overline{B}_{\frac{1}{2}}\left( 0 \right). 
\end{eqnarray}
\end{lemma}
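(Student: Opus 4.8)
The plan is to carry out the Harnack-inequality step of De~Silva's flatness-improvement scheme \cite{DeSilva}, adapted to the divergence-form operator $\div(A\nabla\,\cdot\,)$ of \eqref{P 5.1} and to the matrix-dependent free boundary condition $\langle A\nabla u,\nabla u\rangle=Q$. By symmetry it suffices to prove the first alternative: starting from \eqref{l 5.2.2}, I propagate the $\tfrac{\varepsilon}{2}$-gain at $X_0=\tfrac1{10}e_n$ to a uniform gain $c\varepsilon$ on all of $\overline{B}_{1/2}(0)$; the second alternative \eqref{l 5.2.5} follows verbatim after replacing $u-p$ by $\varepsilon-(u-p)$ and strict subsolution barriers by strict supersolution barriers, the comparison principle being used in the symmetric form. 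The argument has two ingredients: an \emph{interior Harnack inequality} that does the job in a fixed ball around $X_0$, and a \emph{sliding-barrier argument} built on Lemma \ref{l 5.1} that carries the gain down to $B_{1/2}(0)$.

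First I would set up the interior step. Since $|\sigma|<\tfrac1{20}$, at $X_0=\tfrac1{10}e_n$ one has $p(X_0)\ge\tfrac1{20}$, so by \eqref{l 5.2.1} a fixed ball $\overline{B}_\rho(X_0)$ lies inside $\{u>0\}$ at a universal distance from $\mathcal F(u)$. There $u$ solves $\div(A\nabla u)=mf\,u^{m-1}$; expanding the divergence introduces a drift term $(\partial_ia_{ij})\partial_ju$ bounded by $C_0\varepsilon^2$ through \eqref{t 5.2.c}, and the right-hand side is $\le C_1\varepsilon^2$ by \eqref{t 5.2.b}. Hence $w:=u-p$ is nonnegative on $\overline{B}_\rho(X_0)$ (as $p>0$ there), solves a uniformly elliptic equation with $O(\varepsilon^2)$ drift and $O(\varepsilon^2)$ right-hand side (note $\div(A\nabla p)=\partial_ia_{in}$ is itself $O(\varepsilon^2)$), and $w(X_0)\ge\tfrac{\varepsilon}{2}$. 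The interior Harnack inequality then yields $w\ge c_0w(X_0)-C\varepsilon^2\ge 2\bar c\,\varepsilon$ on $\overline{B}_{\rho/2}(X_0)$ once $\varepsilon\le\tilde\varepsilon$ for a universal $\tilde\varepsilon$, i.e. $u\ge p+2\bar c\,\varepsilon$ on $\overline{B}_{\rho/2}(X_0)$.

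Next I would run the barrier. Fix the radial weight $\phi(X)=c_1(|X-X_0|^{-\gamma}-1)$ on $B_1(X_0)$ with $\gamma>n-2$, normalized so that $\phi\equiv0$ on $\partial B_1(X_0)$ and $\phi\equiv1$ on $\partial B_{\rho/2}(X_0)$; then $\phi$ is radially decreasing, $0<c\le|\nabla\phi|\le C$ on the annulus, and $\div(A\nabla\phi)\ge\delta_0-C\varepsilon^2>0$ there. For $t\in[0,t_0]$ put $v_t:=p+\bar c\,\varepsilon\,(\phi-1+t)$ on $\Omega':=B_{3/4}(0)\setminus\overline{B}_{\rho/2}(X_0)$, with $t_0:=1-\phi_*$ and $\phi_*:=\max_{\partial B_{3/4}(0)}\phi<1$. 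Using \eqref{t 5.2.a}--\eqref{t 5.2.d} one checks that $v_t\in C^2(\overline{\Omega'})$ is a \emph{strict subsolution} of \eqref{P 5.1} in the sense of Definition \ref{d 5.3}: in $\{v_t>0\}$, $\div(A\nabla v_t)\ge-C\varepsilon^2+\bar c\,\varepsilon\tfrac{\delta_0}{2}>C_1\varepsilon^2\ge mf\,(u^+)^{m-1}$; and along $\mathcal F(v_t)$, which is confined to the thin strip $\{|X_n+\sigma|\le C\varepsilon\}$ and hence lies well below $X_0$ where $\nabla\phi\cdot e_n\ge c_*>0$, one has $|\nabla v_t|\ge\tfrac12$ and $\langle A\nabla v_t,\nabla v_t\rangle=a_{nn}+2\bar c\,\varepsilon\langle Ae_n,\nabla\phi\rangle+O(\varepsilon^2)\ge 1+\bar c\,c_*\tfrac{\varepsilon}{2}>1+\varepsilon^2\ge Q$. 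On $\partial B_{\rho/2}(X_0)$, where $\phi=1$, one has $v_t=p+\bar c\,\varepsilon t\le p+2\bar c\,\varepsilon\le u$ by the interior step, for all $t\le t_0<2$; on $\partial B_{3/4}(0)$, where $\phi\le\phi_*$, one has $v_t\le p+\bar c\,\varepsilon(\phi_*-1+t)\le p$ for $t\le t_0$, so $v_t^+\le p^+\le u$. A standard open--closed continuity argument in $t$, using Lemma \ref{l 5.1} to upgrade $u\ge v_t^+$ to $u>v_t^+$ in $\Omega^{+}(v_t)\cup\mathcal F(v_t)$ and thereby keep a uniform gap as $t$ grows, gives $u\ge v_t^+$ on $\overline{\Omega'}$ for every $t\in[0,t_0]$. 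Taking $t=t_0$ and using $\phi\ge\phi_{**}:=\min_{B_{1/2}(0)}\phi>\phi_*$ on $B_{1/2}(0)$, one gets $u\ge v_{t_0}=p+\bar c\,\varepsilon(\phi-\phi_*)\ge p+c\,\varepsilon$ there with $c:=\bar c(\phi_{**}-\phi_*)\in(0,1)$, while on $\overline{B}_{\rho/2}(X_0)$ the bound $u\ge p+2\bar c\,\varepsilon\ge p+c\,\varepsilon$ is immediate; this is \eqref{l 5.2.3}.

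I expect the main obstacle to be the barrier construction and, above all, the verification of the strict free boundary inequality $\langle A\nabla v_t,\nabla v_t\rangle>Q$: since both $A$ and $Q$ oscillate only at scale $\varepsilon^2$, the genuinely $O(\varepsilon)$ gain $2\bar c\,\varepsilon\langle Ae_n,\nabla\phi\rangle$ must dominate every $\varepsilon^2$-error, which forces $\phi$ to be chosen so that $\nabla\phi\cdot e_n$ is bounded below precisely on the thin strip where $\mathcal F(v_t)$ can lie. The divergence-form drift $(\partial_ia_{ij})\partial_j$ is harmless---it only feeds the $\varepsilon^2$-errors---but must be tracked in both steps. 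The other delicate point is the openness step in the sliding, where Lemma \ref{l 5.1} has to be combined with a compactness argument so that the strict gap on $\partial B_{\rho/2}(X_0)$ and the identical vanishing of $v_t^+$ near the lower part of $\partial B_{3/4}(0)$ persist under a small increase of $t$.
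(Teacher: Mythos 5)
Your proposal is correct and follows essentially the same route as the paper's own proof: an interior Harnack step near $X_0$ to convert the $\tfrac{\varepsilon}{2}$-gain at the single point into a $c_0\varepsilon$-gain on a fixed inner ball, followed by sliding a radial power-barrier $\phi(X)\sim|X-X_0|^{-\gamma}$ (a strict subsolution in the sense of Definition \ref{d 5.3}) across an annulus and invoking Lemma \ref{l 5.1} to propagate the gain down to $\overline{B}_{1/2}(0)$. The only differences are cosmetic: the paper uses the annulus $B_{4/5}(X_0)\setminus\overline{B}_{1/40}(X_0)$ and a contradiction argument on $t_0=\sup\{t: v_t\le u\}$, while you use $B_{3/4}(0)\setminus\overline{B}_{\rho/2}(X_0)$ and an open--closed sliding, and you normalize the barrier slightly differently, but the constants line up and the verification of the strict free-boundary inequality $\langle A\nabla v_t,\nabla v_t\rangle>Q$ on the thin strip below $X_0$ is the same computation.
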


\begin{proof}
The proof goes as in  \cite{DeSilva}. We will only verify the first statement, as the proof of the second one is analogous. Let $w \colon \overline{D} \rightarrow \mathbb{R}$ be defined by
\begin{eqnarray}
\label{l 5.2.6}
	w\left( X \right) = c\left( \vert X - X_{0} \vert^{-\gamma} - \left( \frac{4}{5}\right)^{-\gamma} \right),
\end{eqnarray}
where $D:= B_{\frac{4}{5}}\left( X_{0} \right)\setminus \overline{B}_{\frac{1}{40}}\left( X_{0} \right)$.
We choose $c > 0$ such that
\begin{eqnarray}
\label{l 5.2.7}
w =	\left \{ 
\begin{array}{lll}
0, & \text{on} & \partial B_{\frac{4}{5}}\left( X_{0} \right), \\
1, & \text{on} & \partial B_{\frac{1}{40}}\left( X_{0} \right). 
\end{array}
\right.
\end{eqnarray}
We compute directly,
\begin{eqnarray}
\label{l 5.2.8}
\partial_{i}w = -\gamma \left( X_{i} - X^{i}_{0} \right)\vert X - X_{0} \vert^{-\gamma - 2} 
\end{eqnarray}
and
\begin{eqnarray}
\label{l 5.2.9}
\partial_{ij}w = \gamma \vert X - X_{0} \vert^{-\gamma - 2} \left\lbrace \left( \gamma + 2 \right) \left( X_{i} -  X^{i}_{0}\right)\left( X_{j} - X^{j}_{0} \right)\vert X - X_{0} \vert^{- 2} - \delta_{ij} \right\rbrace.
\end{eqnarray}
If we label, $b_i = \partial_k a_{ij}$, from  $\Vert a_{ij} - \delta_{ij} \Vert_{L^{\infty}\left(\Omega \right)} \leq \varepsilon^{2}$ and $\Vert b_{i} \Vert_{L^{\infty}\left(\Omega \right)}\leq C_{0}\varepsilon^{2}$, we obtain, in $D$,by choosing $\gamma >0$ (universally)  large,
\begin{eqnarray}
\label{l 5.2.10}
\div  \left ( A\left( X \right) \nabla v \right )  & = & \nonumber \gamma \vert X - X_{0} \vert^{-\gamma - 2} \{ \left( \gamma + 2\right) \vert X - X_{0} \vert^{- 2}\sum_{i,j=1}^{n}a_{ij}\left( X \right)\left( X_{i} -  X^{i}_{0}\right) \cdot \\ \nonumber
&  & \left( X_{j} - X^{j}_{0} \right)  -  \sum_{i,j=1}^{n}a_{ij}\left( X \right)\delta_{ij} - \sum_{i=1}^{n}b_{i}\left( X \right) \left( X_{i} -  X^{i}_{0}\right)\} \\ \nonumber
& \geq & \gamma \vert X - X_{0} \vert^{-\gamma - 2} \left\lbrace \left( \gamma + 2 \right) - C\left( n \right)  \right\rbrace 
\\ 
& \geq & \delta_{0}, 
\end{eqnarray}
where $\delta_{0}> 0$ is a universal constant. From (\ref{l 5.2.1}) we have $u \geq p$ in $B_{1}\left( 0 \right)$. Thus,
\begin{eqnarray}
\label{l 5.2.11}
B_{\frac{1}{20}}\left( X_{0} \right) \subset B^{+}_{1}\left( u \right). 
\end{eqnarray}
Moreover, 
\begin{eqnarray}
\label{l 5.2.12}
\div  \left ( A\left( X \right) \nabla( u - p) \right )  =\div  \left ( A\left( X \right) \nabla u \right )  - b_{n} = mf(X)u^{m-1} - b_{n}, \ \ \mbox{in} \ \ B_{1/20}\left( X_{0}\right),
\end{eqnarray}
with
\begin{eqnarray}
	\Vert mf(X)u^{m-1} - b_{n} \Vert_{L^{\infty}\left( B_{1/20}\left( X_{0} \right)\right)} \leq C\varepsilon^{2}.
\end{eqnarray}
Hence, by Harnack inequality, we obtain
\begin{eqnarray}
\label{l 5.2.13}
u\left( X \right) - p\left( X \right) & \geq & \nonumber u\left( X_{0} \right) - p\left( X_{0} \right) - C\Vert mf(X)u^{m-1} - b_{n} \Vert_{L^{\infty}\left( B_{1/20}\left( X_{0} \right)\right)} \\ \nonumber
&\geq & u\left( X_{0} \right) - p\left( X_{0} \right) - C\varepsilon^{2},
\end{eqnarray} 
for all $X \in B_{\frac{1}{40}}\left( X_{0} \right)$.\\ Using (\ref{l 5.2.2}) for $\varepsilon$ sufficiently small
\begin{eqnarray}
\label{l 5.2.14}
u\left( X \right) - p\left( X \right) & \geq & c\varepsilon - C\varepsilon^{2} \geq c_{0}\varepsilon, \ \ \mbox{in} \ \ B_{1/40}\left( X_{0} \right).
\end{eqnarray}
Define
\begin{eqnarray}
\label{l 5.2.15}
v\left( X \right) = p\left( X \right) + c_{0}\varepsilon \left( w\left( X \right) - 1 \right), \ \ X \in \overline{B}_{\frac{4}{5}}\left( X_{0} \right),
\end{eqnarray}
and for $t \geq 0$,
\begin{eqnarray}
\label{l 5.2.16}
v_{t}\left( X \right) = v\left( X \right) + t, \ \ X \in \overline{B}_{\frac{4}{5}}\left( X_{0} \right).
\end{eqnarray}
By maximum principle (see (\ref{l 5.2.7}) and (\ref{l 5.2.10})) we have $w \leq 1$ in $D$. Then, extending $w$ to $1$ in $B_{\frac{1}{40}}\left( X_{0} \right)$ we find
\begin{eqnarray}
\label{l 5.2.17}
v_{0}\left( X \right) = v\left( X \right) \leq p\left( X \right) \leq u \left( X \right), \ \ X \in \overline{B}_{\frac{4}{5}}\left( X_{0} \right).
\end{eqnarray}
Consider 
$$
	t_{0}= \sup \left\lbrace t \geq 0 : v_{t} \leq u \ \ \mbox{in} \ \ \overline{B}_{\frac{4}{5}}\left( X_{0} \right) \right\rbrace.
$$ 
Assume, for the moment, that we have already verified $t_{0} \geq c_{0}\varepsilon$. From definition of $v$ we have
\begin{eqnarray}
\label{}
u\left( X \right) \geq v \left( X \right) + t_{0} \geq p\left( X \right) + c_{0}\varepsilon w\left( X \right), \ \ \forall X \in B_{\frac{4}{5}}\left( X_{0} \right).    
\end{eqnarray}
Notice that $ B_{\frac{1}{2}}\left( 0\right) \subset  B_{\frac{3}{20}}\left( X_{0} \right)$ and
\begin{eqnarray}
\label{}
w\left( X \right)  \geq	\left \{ 
\begin{array}{lll}
\left( \frac{3}{20}\right)^{-\gamma} - \left( \frac{4}{5}\right)^{-\gamma} , & \text{in} & B_{\frac{3}{20}}\left( X_{0} \right) \setminus B_{\frac{1}{40}}\left( X_{0} \right), \\
1, & \text{on} &  B_{\frac{1}{40}}\left( X_{0} \right). 
\end{array}
\right.  
\end{eqnarray}
Hence, we conclude ($\varepsilon$ small) that
\begin{eqnarray}
\label{}
u\left( X \right) - p\left( X \right) & \geq & \nonumber c\varepsilon, \ \ \mbox{in} \ \ B_{1/2}\left( 0 \right),
\end{eqnarray}
and the result is proved.

\par

Let us now prove that indeed $t_{0} \geq c_{0}\varepsilon$. For that, we suppose for the sake of contradiction that $t_{0} < c_{0}\varepsilon$. Then there would exist $Y_{0} \in \overline{B}_{\frac{4}{5}}\left( X_{0} \right)$ such that
\begin{eqnarray}
\label{}
v_{t}\left( Y_{0} \right) = u\left( Y_{0} \right).
\end{eqnarray}  
In the sequel, we show that $Y_{0} \in B_{\frac{1}{40}}\left( X_{0}\right)$. From definition of $v_{t}$ and by the fact that $w$ has zero boundary data on  $\partial B_{4/5}\left( X_{0} \right)$ we have
\begin{eqnarray}
\label{}
 	v_{t}= p - c_{0}\varepsilon + t_{0} < u \ \ \mbox{in} \ \ \partial B_{4/5}\left( X_{0} \right),
\end{eqnarray}
where we have used that $u \geq p$ and $t_{0} < c_{0}\varepsilon$. Moreover,
\begin{eqnarray}
\label{}
\div  \left ( A\left( X \right) \nabla v_t \right )   \geq \left( c_{0} \delta_{0} - \varepsilon \right) \varepsilon > \varepsilon^{2} \ \ \mbox{in} \ \ D
\end{eqnarray} 
and
\begin{eqnarray}
\label{t 5.2.18}
\vert \nabla v_{t_{0}} \vert \geq \vert \partial_{n}v \vert = \vert 1 + c_{0}\varepsilon \partial_{n}w \vert, \ \  \mbox{in} \ \  D.
\end{eqnarray} 
By radial symmetry of $w$, we have
\begin{eqnarray}
\label{t 5.2.19}
\partial_{n}w\left( X \right)  = \vert \nabla w\left( X \right) \vert \langle \nu_{X}, e_{n}\rangle , \ \ X \in D,
\end{eqnarray}  
where $\nu_{X}$ is the unit vector in the direction of $X - X_{0}$. From (\ref{l 5.2.8}) we have
\begin{eqnarray}
\label{}
\vert \nabla w \vert^{2} & = & \gamma^{2} \vert X - X_{0} \vert^{-2\left( \gamma + 2 \right) } \vert X - X_{0} \vert^{2} \\ \nonumber
& = & \gamma^{2} \vert X - X_{0} \vert^{-2\left( \gamma + 1 \right) } \\ 
& \geq & c > 0, \ \ \ \ \mbox{in} \ \ D.
\end{eqnarray}
Also we have $\langle \nu_{X}, e_{n}\rangle \geq c$ in $\left\lbrace v_{t_{0}} \leq 0 \right\rbrace \cap D$ (for $\varepsilon$ small enough). In fact, if $\varepsilon$ is small enough 
\begin{eqnarray}
\label{}
\left\lbrace v_{t_{0}} \leq 0 \right\rbrace \cap D \subset \left\lbrace p \leq c_{0}\varepsilon \right\rbrace = \left\lbrace X_{n} \leq c_{0}\varepsilon - \sigma \right\rbrace \subset \left\lbrace X_{n} < 1/20 \right\rbrace.
\end{eqnarray} 
We therefore conclude that 
\begin{eqnarray}
\label{}
\langle \nu_{X}, e_{n}\rangle & = & \nonumber \frac{1}{\vert X_{0} - X \vert}\langle X - X_{0}, e_{n} \rangle \\ \nonumber
& \geq & \frac{5}{4}\langle X - X_{0}, e_{n} \rangle  \\ \nonumber
& = & \frac{5}{4}\left( -X_{n} + \frac{1}{20} - \frac{1}{20} + \frac{1}{10}\right) \\ \nonumber
& > & \frac{1}{16}, \ \ \ \ \mbox{in} \ \ \left\lbrace v_{t_{0}} \leq 0 \right\rbrace \cap D.
\end{eqnarray}
Moreover, from $\Vert a_{ij} - \delta_{ij} \Vert \leq \varepsilon^{2}$ we have
\begin{eqnarray}
\label{t 5.2.19i}
\langle A\left( X \right)\xi, \xi \rangle \geq \vert \xi \vert^{2} \left( 1 - \varepsilon^{2}\right), \ \  \forall X \in \Omega, \forall \xi \in \mathbb{R}^{n}.
\end{eqnarray}
Therefore, from (\ref{t 5.2.18}), (\ref{t 5.2.19}) and (\ref{t 5.2.19i}) we obtain
\begin{eqnarray}
\langle A \nabla v_{t_{0}},\nabla v_{t_{0}} \rangle & \geq & \nonumber \vert\nabla v_{t_{0}} \vert^{2} - C\varepsilon^{2} \geq 1 + c_{1}\varepsilon + \varepsilon \left( c_{1} - C\varepsilon \right) + c_{1}^{2}\varepsilon^{2} > 1 + \varepsilon^{2} > Q,
\end{eqnarray}
in $\left\lbrace v_{t_{0}} \leq 0 \right\rbrace \cap D$.
In particular, we have
\begin{eqnarray}
\langle A \nabla v_{t_{0}},\nabla v_{t_{0}} \rangle > Q \ \ \mbox{in} \ \ D \cap \mathcal{F}\left( v_{t_{0}} \right).
\end{eqnarray}
Thus, $v_{t_{0}}$ is a strict subsolution in $D$ and by Lemma \ref{l 5.1} ($u$ is a viscosity solution of problem (\ref{P 5.1}) in $B_{1}\left( 0 \right)$) we conclude that $Y_{0} \in B_{\frac{1}{40}}\left( X_{0} \right)$. This is a contradiction. In fact, we would get
\begin{eqnarray}
u\left( Y_{0} \right) = v_{t_{0}}\left( Y_{0} \right) = v\left( Y_{0} \right) + t_{0} \leq p\left( Y_{0} \right) + t_{0} < p\left( Y_{0} \right) + c_{0}\varepsilon.
\end{eqnarray}
which drives us to a contradiction on  (\ref{l 5.2.14}).  Lemma \label{l 5.2} is concluded.
\end{proof}

\bigskip 

We can now establish the main tool in the proof of Theorem \ref{t 5.1}.

\begin{theorem}
\label{t 5.2.1} 
Let $u$ be a viscosity solution to (\ref{P 5.1}) in $\Omega$ under assumptions (\ref{t 5.2.a})--(\ref{t 5.2.d}). There exists a universal constant $\tilde{\varepsilon} > 0$ such that, if satisfies at some $X_{0} \in \Omega^{+}\left( u \right) \cup F\left( u \right)$, 
\begin{eqnarray}
\label{t 5.2.2}
\left( X_{n} + a _{0}\right)^{+} \leq u \left( X \right) \leq \left( X_{n} + d_{0} \right)^{+} \ \ \mbox{in} \ \ B_{r}\left( X_{0} \right) \subset \Omega,
\end{eqnarray} 
with
\begin{eqnarray}
\label{t 5.2.3}
d_{0} - a_{0} \leq \varepsilon r, \ \ \ \varepsilon \leq \tilde{\varepsilon} 
\end{eqnarray}
then
\begin{eqnarray}
\label{t 5.2.4}
\left( X_{n} + a _{1}\right)^{+} \leq u \left( X \right) \leq \left( X_{n} + d_{1} \right)^{+} \ \ \mbox{in} \ \ B_{\frac{r}{40}}\left( X_{0} \right)
\end{eqnarray}
with
\begin{eqnarray}
\label{t 5.2.5}
a_{0} \leq a_{1} \leq d_{1} \leq d_{0}, \ \ \ d_{1} - a_{1} \leq \left( 1 - c \right)\varepsilon r, 
\end{eqnarray} 
and $0 < c < 1$ universal.
\end{theorem}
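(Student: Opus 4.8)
The plan is to derive Theorem~\ref{t 5.2.1} as a rescaled consequence of the Harnack-type estimate of Lemma~\ref{l 5.2}. First I would reduce to the normalized setting: by translating so that $X_0$ plays the role of the origin and dilating by the factor $r$, introduce the rescaled function $u_r(X) := r^{-1} u(X_0 + rX)$, which is again a viscosity solution of a problem of the form \eqref{P 5.1} with coefficients $a_{ij}^{r}(X) = a_{ij}(X_0 + rX)$, right-hand side $r^m f(X_0 + rX)$ and free boundary condition $Q(X_0 + rX)$. One checks that the scaling is chosen precisely so that the structural smallness assumptions \eqref{t 5.2.a}--\eqref{t 5.2.d} are preserved (indeed the Lipschitz seminorm of $a_{ij}$ and the $L^\infty$ norm of $mf u^{m-1}$ only improve under this rescaling), and that \eqref{t 5.2.2} becomes the flatness hypothesis $(X_n + a_0/r)^+ \le u_r(X) \le (X_n + d_0/r)^+$ in $B_1(0)$.

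Next I would reconcile the flatness band $[a_0/r, d_0/r]$ of width $\le \varepsilon$ with the precise form \eqref{l 5.2.1} required by Lemma~\ref{l 5.2}. Write $a_0/r =: -\sigma$, so that $p(X) = X_n + \sigma$ satisfies $p^+ \le u_r \le (p + \varepsilon)^+$ in $B_1(0)$, after possibly absorbing the harmless discrepancy $d_0/r - (a_0/r + \varepsilon) \le 0$. The smallness $|\sigma| < 1/20$ is guaranteed for $\tilde\varepsilon$ small because $X_0 \in \Omega^+(u) \cup \mathcal F(u)$ forces $0 \in \{u_r > 0\} \cup \mathcal F(u_r)$, which together with the flatness pins $\sigma$ within $O(\varepsilon)$ of $0$. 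Now examine the value of $u_r$ at the interior point $X_0^\star := \tfrac{1}{10} e_n$: either $u_r(X_0^\star) \ge (p(X_0^\star) + \varepsilon/2)^+$ or $u_r(X_0^\star) \le (p(X_0^\star) + \varepsilon/2)^+$. In the first case Lemma~\ref{l 5.2} gives $u_r \ge (p + c\varepsilon)^+$ in $\overline{B}_{1/2}(0)$, so the lower barrier improves; in the second case it gives $u_r \le (p + (1-c)\varepsilon)^+$ in $\overline{B}_{1/2}(0)$, so the upper barrier improves. In either case the oscillation of the flatness band shrinks on $B_{1/2}(0)$, hence a fortiori on $B_{1/40}(0)$, from $\varepsilon$ to $(1-c)\varepsilon$ for a universal $c \in (0,1)$.

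Finally I would unwind the rescaling: the conclusion $(X_n + \tilde a_1)^+ \le u_r(X) \le (X_n + \tilde d_1)^+$ on $B_{1/40}(0)$ with $\tilde d_1 - \tilde a_1 \le (1-c)\varepsilon$ and $\tilde a_0 \le \tilde a_1 \le \tilde d_1 \le \tilde d_0$ translates back, upon multiplying by $r$ and translating, into \eqref{t 5.2.4}--\eqref{t 5.2.5} with $a_1 = r\tilde a_1$, $d_1 = r\tilde d_1$. The monotonicity $a_0 \le a_1 \le d_1 \le d_0$ is automatic because the improved barriers are squeezed strictly inside the original band.

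I expect the main obstacle to be the bookkeeping in the second paragraph: one must verify carefully that after the affine change of variables the plane $p(X) = X_n + \sigma$ can be chosen with $|\sigma| < 1/20$ while simultaneously keeping the interior comparison point $X_0^\star = \tfrac{1}{10} e_n$ inside the relevant region, and that the rescaled coefficients still satisfy \eqref{t 5.2.a}--\eqref{t 5.2.d} with the \emph{same} universal constants $C_0, C_1$ (this uses $r \le 1$ and $m \ge 1$ in an essential way for the term $r^m f u^{m-1}$). Everything else is a direct invocation of Lemma~\ref{l 5.2} together with the trivial inclusion $B_{1/40}(0) \subset B_{1/2}(0)$.
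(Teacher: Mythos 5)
Your rescaling to $B_1(0)$ and the reduction of the core case to Lemma~\ref{l 5.2} is exactly the right mechanism, and your disposal of the case $a_0/r < -1/20$ (forcing $0$ into the interior of the zero set) is correct. However, your claim in the second paragraph that the hypothesis $X_0 \in \Omega^+(u)\cup\mathcal F(u)$ together with flatness ``pins $\sigma$ within $O(\varepsilon)$ of $0$'' is false, and this is a genuine gap. That deduction only works when $X_0 \in \mathcal F(u)$: then indeed $u_r(0)=0$ forces $a_0/r \le 0$, and $0\in\overline{\{u_r>0\}}$ forces $d_0/r\ge 0$, so $-\varepsilon \le a_0/r \le 0$. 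But when $X_0 \in \Omega^+(u)$, the constraint $u_r(0)>0$ gives no upper bound on $a_0/r$ whatsoever: one can have, say, $a_0/r = 1$, $d_0/r = 1+\varepsilon$, in which case $u_r \ge X_n + 1 > 0$ throughout $B_1(0)$ and the plane sits far outside the admissible window $|\sigma|<1/20$ of Lemma~\ref{l 5.2}. (There is also a harmless sign slip in your definition $a_0/r =: -\sigma$; the lower barrier $(X_n + a_0/r)^+$ corresponds to $p(X)=X_n+\sigma$ with $\sigma = a_0/r$, not $-a_0/r$.)

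The paper closes this gap with a three-way case split on $a_0$ (after normalizing $r=1$): $|a_0|<1/20$ is handled by Lemma~\ref{l 5.2} as you propose; $a_0<-1/20$ is a contradiction as you note; and $a_0>1/20$, which you have tacitly excluded, is treated separately. In that last case $B_{1/20}(0)\subset\{u>0\}$, so no free boundary comparison is needed: one writes $p(X)=X_n+d_0$, observes that $p-u$ is a nonnegative solution of $\mathrm{div}(A\nabla(p-u)) = b_n - mf\,u^{m-1}$ with right-hand side of size $O(\varepsilon^2)$, and applies the interior Harnack inequality to $p-u$ in $B_{1/20}(0)$ to improve the upper barrier by a universal fraction. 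To make your argument complete you must add this interior-positivity case; everything else in your proposal matches the paper's proof.
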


\begin{proof}
With no loss of generality, we assume  $X_{0}=0$ and $r=1$. We analyze  two distinct cases:
\vspace{0,5cm}\\
1. $\vert a_{0} \vert < \frac{1}{20}$:
\vspace{0,5cm}\\
We put $p\left( X \right) = X_{n} + a_{0}$ and by (\ref{t 5.2.2}) 
\begin{eqnarray}
\label{t 5.2.6}
p^{+}\left( X \right) \leq u \left( X \right) \leq \left( p\left( X \right) + a_{0} \right)^{+} \ \  \left( d_{0} \leq a_{0} + \varepsilon \right). 
\end{eqnarray}
Thus, we can apply Lemma \ref{l 5.2} to obtain the result. 
\vspace{0,5 cm}\\
2. $\vert a_{0} \vert \geq \frac{1}{20}$:
\vspace{0,5cm}\\
Assume that $a_{0} < - \frac{1}{20}$. If we take $\varepsilon < \frac{1}{20}$, it follows that $\left( p + \varepsilon \right)^{+} = 0$. Thus $0$ belongs to the interior of zero phase of $u$, which turns into a contradiction.
\par

If $a_{0} > \frac{1}{20}$, then $B_{\frac{1}{20}}\left(0 \right) \subset B^{+}_{1}\left( u \right)$ and the result follows from Harnack inequality. In fact, if $p = x_{n} + d_{0}$
\begin{eqnarray}
\label{}
\div  \left ( A\left( X \right) \nabla  \left( p - u \right)  \right ) = b_{n} - mf(X)u^{m-1} , 
\end{eqnarray}
with $\Vert b_{n} - mf(X)u^{m-1}  \Vert_{L^{\infty}\left( B_{1/20}\left( 0 \right)\right)} \leq C\varepsilon^{2}$. Hence, since $\left( p - u \right)\left( 0 \right) = a_{0}$ we have
\begin{eqnarray}
\label{}
 p\left( X \right) - u\left( X \right) \geq c_{0}d_{0} - C \varepsilon^{2}, \ \ \left( 0 < c_{1} < 1 \right) 
\end{eqnarray} 
which implies
\begin{eqnarray}
\label{}
 u\left( X \right) \leq p\left( X \right) - d_{0}c_{0}  +  C \varepsilon^{2} = X_{n} + d_{0}\left( 1 - c_{0}\right) + C\varepsilon^{2}, \ \ \forall X \in B_{\frac{1}{40}}\left( X_{0}\right). 
\end{eqnarray} 
Let $c_{1} = d_{0} - a_{0}$. Then, 
\begin{eqnarray}
\label{}
u\left( X \right) & \leq & \nonumber  X_{n} + \left( c_{1} + a_{0} \right) \left( 1 - c_{0}\right) + C\varepsilon^{2} \\ \nonumber
& = & X_{n} + a_{0} + c_{1}\left( 1 - c_{0}\right) - a_{0}c_{0} + C\varepsilon^{2} \\ \nonumber
& \leq & X_{n} + a_{0} + c_{1}\left( 1 - c_{0}\right) - \frac{c_{0}}{20} + C\varepsilon^{2} \\ \nonumber
& \leq & X_{n} + a_{0} + c_{1}\left( 1 - c_{0}\right), 
\end{eqnarray} 
if $\varepsilon$ is small enough. Hence, if we put $a_{1}:= a_{0}$ and $d_{1} = a_{0} + c_{1}\left( 1 - c_{0}\right)$ we obtain the result.  
\end{proof}

\bigskip

From Harnack  inequality, Theorem \ref{t 5.2.1}, precisely as in \cite{DeSilva}, we obtain the following key  estimate for flatness improvement.
\begin{corollary}
\label{c 5.1}
Let $u$ be a viscosity solution to (\ref{P 5.1}) in $\Omega$ under assumptions (\ref{t 5.2.a})--(\ref{t 5.2.d}). If $u$ satisfies (\ref{t 5.2.2}) then in $B_{1}\left( X_{0} \right)$ the function $\tilde{u}_{\varepsilon}:= \frac{u - X_{n}}{\varepsilon}$ has a H\"older modulus of continuity at $X_{0}$ outside of ball of radius $\varepsilon / \tilde{\varepsilon}$, i.e. for all $X \in \left( \Omega^{+}\left( u \right) \cup F\left( u \right) \right)\cap B_{1}\left( X_{0} \right)$ with $\vert X - X_{0} \vert \geq \varepsilon / \tilde{\varepsilon}$ 
\begin{eqnarray}
\label{}
\vert \tilde{u}_{\varepsilon}\left( X \right) - \tilde{u}_{\varepsilon}\left( X_{0} \right)\vert \leq C \vert X - X_{0} \vert^{\gamma}. 
\end{eqnarray}    
\end{corollary}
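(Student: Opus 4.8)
The plan is to iterate Theorem~\ref{t 5.2.1} dyadically and read off a discrete H\"older bound, exactly in the spirit of the oscillation-decay argument that turns a single-step Harnack inequality into a modulus of continuity. First I would fix $X_0 \in \left(\Omega^+(u)\cup F(u)\right)\cap B_1(X_0)$ (after translation we may take $X_0=0$) and let $r_k := 40^{-k}$. Starting from the hypothesis \eqref{t 5.2.2}–\eqref{t 5.2.3}, which gives numbers $a_0 \le d_0$ with $d_0-a_0 \le \varepsilon$ and
$$
\left(X_n + a_0\right)^+ \le u(X) \le \left(X_n + d_0\right)^+ \quad\text{in } B_{1}(0),
$$
I would apply Theorem~\ref{t 5.2.1} repeatedly: as long as the current flatness parameter $\varepsilon_k := 40^{k}\,(d_k-a_k)$ still satisfies $\varepsilon_k \le \tilde\varepsilon$, the theorem produces $a_{k+1},d_{k+1}$ with $a_k\le a_{k+1}\le d_{k+1}\le d_k$ and $d_{k+1}-a_{k+1}\le (1-c)(d_k-a_k)$, together with
$$
\left(X_n + a_{k+1}\right)^+ \le u(X) \le \left(X_n + d_{k+1}\right)^+ \quad\text{in } B_{r_{k+1}}(0).
$$
Hence $d_k-a_k \le (1-c)^k \varepsilon$ for every $k$ in the allowed range, and the rescaling relation $\varepsilon_k = 40^k(d_k-a_k) \le \bigl(40(1-c)\bigr)^k\varepsilon$ shows — choosing $\tilde\varepsilon$ small and noting $40(1-c)<40$ is irrelevant; what matters is that the sequence $\varepsilon_k$ stays $\le\tilde\varepsilon$ precisely until the scale $r_k$ drops below $\varepsilon/\tilde\varepsilon$ — that the iteration may be continued for all $k$ with $r_k \ge \varepsilon/\tilde\varepsilon$. (More carefully: one checks by induction that $\varepsilon_k \le \tilde\varepsilon$ holds for all such $k$, using that $d_k-a_k \le (1-c)^k\varepsilon$ and $40^k \le (\tilde\varepsilon/\varepsilon)$ on that range.)

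Next I would convert the trapping of $u$ between the two translated half-planes into a bound on $\tilde u_\varepsilon := (u - X_n)/\varepsilon$. On $B_{r_k}(0)$ the two-sided bound above gives, at every point of $\left(\Omega^+(u)\cup F(u)\right)\cap B_{r_k}(0)$,
$$
a_k \le u(X) - X_n \le d_k, \qquad\text{hence}\qquad \frac{a_k}{\varepsilon}\le \tilde u_\varepsilon(X) \le \frac{d_k}{\varepsilon},
$$
so $\operatorname*{osc}_{B_{r_k}(0)} \tilde u_\varepsilon \le \dfrac{d_k-a_k}{\varepsilon} \le (1-c)^k$. In particular $|\tilde u_\varepsilon(X)-\tilde u_\varepsilon(0)| \le (1-c)^k$ whenever $X \in B_{r_k}(0)$ lies in the positivity set or on the free boundary. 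Given any such $X$ with $\varepsilon/\tilde\varepsilon \le |X| \le 1$, I would pick the integer $k$ with $r_{k+1} \le |X| < r_k$; then $k$ is within the admissible range ($r_k > |X| \ge \varepsilon/\tilde\varepsilon$), so $|\tilde u_\varepsilon(X)-\tilde u_\varepsilon(0)| \le (1-c)^k$. Writing $1-c = 40^{-\gamma}$ with $\gamma := -\log(1-c)/\log 40 \in (0,1)$ and using $|X| \ge r_{k+1} = 40^{-(k+1)}$, i.e. $40^{-k} \le 40\,|X|$, we get
$$
|\tilde u_\varepsilon(X)-\tilde u_\varepsilon(0)| \le \bigl(40^{-k}\bigr)^{\gamma} \le 40^{\gamma}\,|X|^{\gamma} =: C\,|X|^{\gamma},
$$
which is the claimed estimate (with $C$ universal since $c$, and hence $\gamma$, is universal).

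The step I expect to be the main obstacle is the bookkeeping of when the dyadic iteration of Theorem~\ref{t 5.2.1} is permitted to continue, i.e. verifying rigorously that the rescaled flatness $\varepsilon_k = 40^k(d_k - a_k)$ remains below the universal threshold $\tilde\varepsilon$ for exactly those scales $r_k \gtrsim \varepsilon/\tilde\varepsilon$, and not confusing the "inner scale'' $\varepsilon/\tilde\varepsilon$ (below which the half-plane approximation is too crude to improve) with the radius $1$ at which the hypothesis is posed. This is the same subtlety as in \cite{DeSilva}: the geometric decay $d_k - a_k \le (1-c)^k\varepsilon$ is immediate from Theorem~\ref{t 5.2.1}, but one must keep track that each application requires $\varepsilon_k \le \tilde\varepsilon$, and this fails once $40^k$ overtakes $\tilde\varepsilon/\varepsilon$ — precisely when $r_k$ reaches the scale $\varepsilon/\tilde\varepsilon$. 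Everything else (the passage from the trapping inequalities to the oscillation bound, and from the dyadic oscillation bound to the continuous H\"older estimate) is routine. We therefore follow \cite{DeSilva} verbatim for this corollary.
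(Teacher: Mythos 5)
Your argument is correct and is exactly the iteration the paper invokes: the paper gives no explicit proof here, stating only that the corollary follows from the Harnack estimate of Theorem~\ref{t 5.2.1} ``precisely as in \cite{DeSilva},'' and your dyadic iteration (flatness decay by a factor $1-c$ per scale reduction by $40$, continued as long as the rescaled flatness $40^k(d_k-a_k)$ stays below $\tilde\varepsilon$, i.e.\ down to scale $\varepsilon/\tilde\varepsilon$, then conversion of the oscillation decay $\osc_{B_{r_k}}\tilde u_\varepsilon\le(1-c)^k$ into a H\"older bound with $\gamma=-\log(1-c)/\log 40$) is precisely De Silva's argument. The bookkeeping you flag as the potential obstacle is handled correctly: from $d_k-a_k\le(1-c)^k\varepsilon$ and $40^k\le\tilde\varepsilon/\varepsilon$ one gets $40^k(d_k-a_k)\le\tilde\varepsilon(1-c)^k\le\tilde\varepsilon$, so each application of the theorem in the admissible range is justified.
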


\bigskip

We are ready to establish improvement of flatness. 

\begin{theorem}[Flatness Improvement] 
\label{t 5.3}
Let $u$ be a viscosity solution to (\ref{P 5.1}) in $\Omega$ under assumptions (\ref{t 5.2.a})--(\ref{t 5.2.d}). Assume that $u$ satisfies 
\begin{eqnarray}
\label{t 5.3.1}
\left( X_{n} - \varepsilon \right)^{+} \leq u \leq \left( X_{n} + \varepsilon \right)^{+} \ \ \mbox{for} \ \ X \in B_{1}\left( 0 \right), 
\end{eqnarray}
with $0 \in \mathcal{F}\left( u \right)$.  If $0 < r \leq r_{0}$ for $r_{0}$ a universal constant and $0 < \varepsilon \leq \varepsilon_{0}\left( r \right)$ then
\begin{eqnarray}
\label{t 5.3.2}
\left( X \cdot \nu - \frac{r}{2}\varepsilon \right)^{+} \leq u \leq \left( X \cdot \nu + \frac{r}{2}\varepsilon\right)^{+} \ \ \mbox{for} \ \ X \in B_{r}\left( 0 \right),
\end{eqnarray}
with $\vert \nu \vert = 1$, and $\vert \nu - e_{n}\vert \leq C \varepsilon^{2}$ for a universal constant $C$.
\end{theorem}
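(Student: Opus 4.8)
The plan is to run the compactness-and-linearization scheme of \cite{DeSilva}, with the partial Harnack inequality (Theorem~\ref{t 5.2.1}) and its consequence Corollary~\ref{c 5.1} as the two workhorses. Arguing by contradiction, suppose that for some fixed small $r$ (to be chosen) the conclusion fails: there are $\varepsilon_k\searrow 0$, data $a^k_{ij},f_k,Q_k$ obeying \eqref{t 5.2.a}--\eqref{t 5.2.d} with $\varepsilon=\varepsilon_k$, and viscosity solutions $u_k$ of the associated problem \eqref{P 5.1} in $B_1(0)$ with $0\in\mathcal{F}(u_k)$ and $(X_n-\varepsilon_k)^+\le u_k\le(X_n+\varepsilon_k)^+$ in $B_1(0)$, for which \eqref{t 5.3.2} fails at this scale $r$ for every admissible $\nu$. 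Introduce the renormalized graphs
\[
  \widetilde u_k(X):=\frac{u_k(X)-X_n}{\varepsilon_k},\qquad X\in\big(\Omega_+(u_k)\cup\mathcal{F}(u_k)\big)\cap B_1(0),
\]
which satisfy $|\widetilde u_k|\le 1$ on their domain, while $\mathcal{F}(u_k)\to\{X_n=0\}$ in the Hausdorff distance inside $B_{1/2}(0)$.

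First I would extract a limit. By Theorem~\ref{t 5.2.1} and Corollary~\ref{c 5.1}, the functions $\widetilde u_k$ enjoy a uniform $C^{0,\gamma}$ modulus of continuity away from a ball of radius $\varepsilon_k/\widetilde\varepsilon$ about free boundary points; since that ball shrinks to a point, a standard Arzel\`a--Ascoli argument (together with the flattening of $\mathcal{F}(u_k)$) produces, along a subsequence, a H\"older function $\widetilde u_\infty$ on $B_{1/2}(0)\cap\{X_n\ge 0\}$ whose graph is the Hausdorff limit of the graphs of $\widetilde u_k$, with $\widetilde u_\infty(0)=0$ and $|\widetilde u_\infty|\le 1$. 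The claim is that $\widetilde u_\infty$ solves, in the viscosity sense, the linearized problem
\[
  \Delta\widetilde u_\infty=0 \ \text{ in } B_{1/2}(0)\cap\{X_n>0\},\qquad \partial_n\widetilde u_\infty=0 \ \text{ on } B_{1/2}(0)\cap\{X_n=0\}.
\]
The interior equation follows because subtracting $X_n$ and dividing by $\varepsilon_k$ turns $\div(A^k\nabla\,\cdot\,)$ into $\Delta$ up to terms that are $O(\varepsilon_k^2)/\varepsilon_k\to 0$ by \eqref{t 5.2.a} and \eqref{t 5.2.c}, while the right-hand side $mf_k u_k^{m-1}$ is $O(\varepsilon_k^2)$ by \eqref{t 5.2.b}; hence a smooth test function strictly touching $\widetilde u_\infty$ from one side can be perturbed to touch some $\widetilde u_k$, passing the sign condition to the limit. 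The Neumann condition is obtained the same way from the viscosity free boundary condition: for a test function $\phi$ with $\phi=X_n+\varepsilon_k(\text{affine})+\dots$ one has $\langle A^k\nabla\phi,\nabla\phi\rangle=a^k_{nn}+2\varepsilon_k\partial_n(\text{affine})+O(\varepsilon_k^2)$ and $Q_k=1+O(\varepsilon_k^2)$, so after subtracting $1$ and dividing by $2\varepsilon_k$ the inequality passes to $\pm\partial_n\widetilde u_\infty\ge 0$ on $\{X_n=0\}$.

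Finally I would close by elliptic regularity. Reflecting $\widetilde u_\infty$ evenly across $\{X_n=0\}$ gives a harmonic function on $B_{1/2}(0)$, hence $\widetilde u_\infty\in C^\infty$ with $\|\widetilde u_\infty\|_{C^{2}(\overline{B}_{1/4}(0))}\le C$ universal; since $\widetilde u_\infty(0)=0$ and $\partial_n\widetilde u_\infty(0)=0$, Taylor's theorem gives $|\widetilde u_\infty(X)-\xi'\cdot X'|\le Cr^2$ on $B_r(0)$, where $\xi':=\nabla'\widetilde u_\infty(0)$ and $|\xi'|\le C$. Unwinding the convergence of graphs, for $k$ large $u_k$ is trapped between $(X_n+\varepsilon_k\xi'\cdot X'\mp\tfrac14 Cr^2\varepsilon_k)^+$ in $B_r(0)$; writing $\nu:=(e_n+\varepsilon_k\xi')/|e_n+\varepsilon_k\xi'|$ (so $|\nu-e_n|\le C\varepsilon_k$ and $X_n+\varepsilon_k\xi'\cdot X'=(1+O(\varepsilon_k^2))\,X\cdot\nu$ because $\xi'\perp e_n$), and choosing first $r$ small so that $Cr^2\le\tfrac18$ and then $k$ large, gives \eqref{t 5.3.2} with this $\nu$, contradicting the choice of $u_k$. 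The one genuinely delicate step is the passage to the limit in the free boundary condition, i.e.\ making the heuristic linearization above rigorous while ruling out loss of the free boundary in the limit; this is exactly what the comparison principle (Lemma~\ref{l 5.1}) and the Harnack-type estimates of Theorem~\ref{t 5.2.1} and Corollary~\ref{c 5.1} are designed to supply.
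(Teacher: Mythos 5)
Your proposal follows essentially the same route as the paper: argue by contradiction, renormalize to $\tilde u_k=(u_k-X_n)/\varepsilon_k$, use the partial Harnack estimate (Theorem~\ref{t 5.2.1} / Corollary~\ref{c 5.1}) to extract a H\"older limit, show the limit solves the linearized Neumann problem in the viscosity sense, and then transfer the smooth limit's linear behavior back to $u_k$ for large $k$. One small remark: your computation correctly yields $|\nu-e_n|\le C\varepsilon$ (consistent with De Silva's original statement), whereas the theorem as stated in the paper writes $|\nu-e_n|\le C\varepsilon^2$; this appears to be a typo in the paper, not a defect in your argument.
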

\begin{proof}
Again the proof goes in the lines of \cite{DeSilva}. F ix $0 < r \leq r_{0}$ with $r_{0}$ a universal constant to be chosen. Suppose by contradiction that there exists a sequence $\varepsilon_{j} \rightarrow 0$ and a sequence $u_{j}$ of solutions to $\left( \ref{P 5.1}\right)$ in $B_{1}\left( 0 \right)$ with right hand side $g_{j}$ and free boundary condition $Q_{j}$ such that
\begin{eqnarray}
\label{C 5}
\left( X_{n} - \varepsilon_{j} \right)^{+} \leq u_{j} \leq \left( X_{n} - \varepsilon_{j} \right)^{+} \ \ \mbox{for} \ \ X \in B_{1}\left( 0 \right), \ 0 \in \mathcal{F}\left( u_{j} \right), 
\end{eqnarray} 
but it does not satisfy the conclusion $\left( \ref{t 5.3.2}\right)$. Define
\begin{eqnarray*}
\tilde{u}_{j}\left( X \right)= \dfrac{u_{j}\left(  X \right) - X_{n}}{\varepsilon_{j}}, \ \ X \in \Omega_{1}\left( u_{j}\right),  
\end{eqnarray*}
where $\Omega_{\rho}:= \left( B_{1}^{+}\left( u \right) \cup F\left( u \right) \right) \cap B_{\rho}\left( 0 \right)$, for $0 < \rho < 1$. 
Then (as in \cite{DeSilva}) the graphs of the $\tilde{u}_{j}$ over $\Omega_{\frac{1}{2}}\left( u_{j}\right)$ converge (up to subsequence) in the Hausdorff distance to the graph of a H\"older continuous function $\tilde{u}$ over $B_{\frac{1}{2}}\left( 0 \right) \cap \left\lbrace X_{n} \geq 0 \right\rbrace$. We claim that $\tilde{u}$ is a solution of the problem
\begin{eqnarray}
\label{}
\left \{
\begin{array}{lll}
\Delta u = 0 \ \ \mbox{in} \ \ B_{\frac{1}{2}}\left( 0 \right) \cap \left\lbrace  X_{n} > 0\right\rbrace, \\
\partial_{n}\tilde{u} = 0 \ \ \mbox{on} \ \ B_{\frac{1}{2}}\left( 0 \right) \cap \left\lbrace X_{n} = 0\right\rbrace, \\
\end{array}
\right.
\end{eqnarray}
in sense viscosity (see \cite{DeSilva}, Def. 2.5 and its remark). Given a quadratic polynomial $P \left( X \right)$ touching $\tilde{u}$ at $X_{0} \in B_{\frac{1}{2}}\left( 0 \right)  \cap \left\lbrace  X_{n} \geq 0 \right\rbrace$ strictly by below we need to prove that
\begin{enumerate}
	\item[(i)] If $X_{0} \in \ B_{\frac{1}{2}}\left( 0 \right) \cap \left\lbrace  X_{n} > 0 \right\rbrace$ then $\Delta P \leq 0$;
	\item[(ii)] if $X_{0} \in \ B_{\frac{1}{2}}\left( 0 \right) \cap \left\lbrace  X_{n} = 0 \right\rbrace$ then $\partial_{n} P\left( X_{0} \right) \leq 0$.
\end{enumerate}

As in \cite{DeSilva}, there exist points $X_{j} \in \Omega_{\frac{1}{2}}\left( u_{j}\right)$, $X_{j} \rightarrow X_{0}$, and constants $c_{j} \rightarrow 0$ such that
\begin{eqnarray}
u_{j}\left( X_{j}\right) = \tilde{P}\left( X_{j} \right) 
\end{eqnarray}
and
\begin{eqnarray}
u_{j}\left( X\right)\geq \tilde{P}\left( X \right) \ \ \mbox{in a neighborhood of} \ X_{j}   
\end{eqnarray}
where 
\begin{eqnarray}
\tilde{P}\left( X \right) = \varepsilon_{j}\left( P\left( X \right) + c_{j}  \right) + X_{n}.   
\end{eqnarray} 
We have two possibilities:\\

\noindent (a) If $X_{0} \in B_{\frac{1}{2}} \cap \left\lbrace  X_{n} > 0 \right\rbrace$ then, since $P$ touches $u_{j}$ by below at $X_{j}$, we obtain
\begin{eqnarray*} 
C_{1}\varepsilon_{j}^{2}\geq g_{j}\left( X_{j}\right) & \geq & \sum_{i,l=1}^{n} a^{j}_{il}\left( X \right) \partial_{il}\tilde{P} + \sum_{i=1}^{n} b^{j}_{i}\left( X_{j} \right) \partial_{i}\tilde{P} \\ \nonumber
& = & \varepsilon_{j}\sum_{i,l=1}^{n} a^{j}_{il}\left( X_{j}\right) \partial_{il}P + \varepsilon_{j}\sum_{i=1}^{n} b^{j}_{i}\left( X_{j} \right) \partial_{i}P + b^{j}_{n}\left( X_{j}\right),
\end{eqnarray*}
where $\Vert b^{j}_{i} \Vert_{L_{\infty}} \leq C_{0}\varepsilon^{2}_{j}$ and $\Vert \partial_{i} P \Vert_{L_{\infty}} \leq C$. Therefore,
\begin{eqnarray*} 
\sum_{i,l=1}^{n} a^{j}_{il}\left( X_{j}\right) \partial_{il}P \leq C\varepsilon_{j},
\end{eqnarray*} 
Thus, we have
\begin{eqnarray*} 
\Delta P & = & \sum_{i,l=1}^{n}\left( \delta_{il} - a^{j}_{il}\left( X_{k} \right) \right) \partial_{il}P + \sum_{i,l=1}^{n} a^{j}_{il}\left( X_{k} \right) \partial_{il}P\\ \nonumber
& \leq & C\varepsilon_{j}. 
\end{eqnarray*} 
Hence, taking $j \rightarrow \infty$ we obtain
\begin{eqnarray*} 
\Delta P \leq 0.
\end{eqnarray*}

\noindent (b)  If $X_{0} \in \ B_{\frac{1}{2}} \cap \left\lbrace  X_{n} = 0 \right\rbrace$ we can assume,  see \cite {DeSilva}, that 
\begin{eqnarray}
\label{strict subharmonic}
\Delta P > 0 
\end{eqnarray}
Notice that for $j$ sufficiently large we have $X_{j} \in F\left( u_{j}\right)$. In fact, suppose by contradiction that there exists a subsequence $X_{j_{n}} \in B^{+}_{1}\left( u_{j_{n}}\right)$ such that $X_{j_{n}} \rightarrow X_{0}$. Then arguing as in (i) we obtain
\begin{eqnarray*} 
\Delta P \leq  C\varepsilon_{j}, 
\end{eqnarray*}
which contradicts (\ref{strict subharmonic}) as $j_{n} \rightarrow \infty$. Therefore, there exists $j_{0} \in \mathbb{N}$ such that $X_{j} \in F\left( u_{j}\right)$ for $j \geq j_{0}$. 
Moreover, 
\begin{eqnarray*} 
\vert \nabla \tilde{P} \vert \geq 1 - \varepsilon_{j}\vert \nabla P\vert > 0, 
\end{eqnarray*}
for $j$ sufficiently large (we can assume that $j \geq j_{0}$). Since that $\tilde{P}^{+}$ touches $u_{j}$ by below we have
\begin{eqnarray*} 
\langle A^{j}\left( X_{j}\right)\nabla \tilde{P}\left( X_{j}\right), \nabla \tilde{P}\left( X_{j}\right) \rangle \leq Q_{j}\left( X_{j}\right) \leq \left(1 + \varepsilon^{2}_{j} \right). 
\end{eqnarray*} 
Moreover,
\begin{eqnarray}
\langle A^{j}\left( X_{j} \right) \nabla \tilde{P}\left( X_{j}\right),\nabla \tilde{P}\left( X_{j}\right) \rangle & \geq & \nonumber \vert \nabla \tilde {P}\left( X_{j} \right) \vert^{2} - C\varepsilon^{2}_{j} \\
&=& \varepsilon^{2}_{j}\vert \nabla P \left( X_{j}\right) \vert^{2} + 1 + 2\varepsilon_{j}\partial_{n}P \left( X_{j}\right) - C\varepsilon_{j}^{2},
\end{eqnarray}
where we have used  $\vert \nabla \tilde {P} \vert^{2} \leq C$.
In conclusion, we obtain
\begin{eqnarray}
\label{t 5.3.4} 
\varepsilon^{2}_{j}\vert \nabla P \left( X_{j}\right) \vert^{2} + 1 + 2\varepsilon_{j}\partial_{n}P \left( X_{j}\right) - C\varepsilon_{j}^{2}\leq 1 + \varepsilon^{2}_{j}. 
\end{eqnarray}
Hence, dividing (\ref{t 5.3.4}) by $\varepsilon_{j}$ and taking $j \rightarrow \infty$ we obtain $\partial_{n}P\left( X_{0}\right) \leq 0$.    

The choose of $r_{0}$ and the conclusion of the Theorem follows from the regularity of $\tilde{u}$. 
\end{proof}

\bigskip

We can finally conclude the proof of Theorem   \ref{t 5.1}. \\

\begin{proof}[Proof of Theorem \ref{t 5.1}]
Let
\begin{eqnarray}
u_{j}\left( X \right) = \frac{u\left( \rho_{j}X\right) }{\rho_{j}}, \ \ X \in B_{1}\left( 0 \right),   
\end{eqnarray}
the sequence of rescaling with $\rho_{j}= \tilde{r}^{j}$ for a fixed $\tilde{r}$ such that
\begin{eqnarray}
\tilde{r}^{\beta} \leq \frac{1}{4} \ \ \mbox{and} \ \ \tilde{r} \leq r_{0}   
\end{eqnarray}
where $r_{0}$ is the universal constant as in Theorem \ref{t 5.3}. Notice that $u_{j}$ is solution of Problem \eqref{P 5.1} with 
\begin{eqnarray}
	\nonumber a^{j}_{il}\left( X \right) := a^{j}_{il}\left( \rho_{j}X \right),  \\ 
	\nonumber g_{j}\left( X \right) := \rho_{j}m f\left( \rho_{j} X \right) u^{m-1}\left( \rho_{j} X \right), \\ 
	\nonumber Q_{j}\left( X \right) := Q\left( \rho_{j}X \right).     
\end{eqnarray}
Moreover, if $\tilde{\varepsilon} := \varepsilon^{2}_{0}\left( \tilde{r}\right)$ and $\varepsilon_{j}:= 2^{-j}\varepsilon_{0}\left( \tilde{r}\right)$ we obtain
\begin{eqnarray}
\vert a^{j}_{il}\left( X \right) - \delta_{il} \vert = \vert a_{il}\left( \rho_{j}X \right) - a_{il}\left( 0\right)\vert \leq  \left[ a_{il}\right]_{C^{0,1}}\rho_{j} \leq \tilde{\varepsilon}\tilde{r}^{j} \leq \varepsilon^{2}_{j},     
\end{eqnarray}
\begin{eqnarray}
\Vert b^{j}_{i}\Vert_{L^{\infty}} \leq  C_{0}\left[ a_{ij}\right]_{C^{0,1}}\rho_{j} \leq C_{0}\tilde{\varepsilon}\tilde{r}^{j} \leq C_{0} \varepsilon^{2}_{j},     
\end{eqnarray}
\begin{eqnarray}
\Vert g_{j}\Vert_{L^{\infty}} \leq  \Vert g_{j}\Vert_{L^{\infty}}\rho_{j} \leq C_{1}\tilde{\varepsilon}\tilde{r}^{j} \leq  C_{1} \varepsilon^{2}_{j},     
\end{eqnarray}
\begin{eqnarray}
\vert Q_{j}\left( X \right) - 1 \vert = \vert Q\left( \rho_{j}X \right) - Q\left( 0\right)\vert \leq  \left[ Q\right]_{C^{0,\beta}}\rho^{\beta}_{j} \leq \tilde{\varepsilon}\tilde{r}^{j\beta} \leq \varepsilon^{2}_{j}.     
\end{eqnarray}
The proof now follows as in \cite{DeSilva}.
\end{proof}

\bibliographystyle{amsplain, amsalpha}

\medskip

\noindent \textsc{Eduardo V. Teixeira} \hfill \textsc{Raimundo Leit\~ao} \\
\noindent Universidade Federal do Cear\'a  \hfill  Universidade Federal do Cear\'a \\
\noindent Departamento de Matem\'atica \hfill Departamento de Matem\'atica\\
\noindent Campus do Pici - Bloco 914 \hfill Campus do Pici - Bloco 914\\
\noindent Fortaleza, CE - Brazil 60.455-760 \hfill Fortaleza, CE - Brazil 60.455-760\\
\noindent \texttt{eteixeira@ufc.br} \hfill  \texttt{juniormatufc@yahoo.com.br}

\end{document}